\theoremstyle{definition}
\newtheorem{ntn}{Notation}[section]
\newtheorem{dfn}[ntn]{Definition}
\theoremstyle{plain}
\newtheorem{lem}[ntn]{Lemma}
\newtheorem{prp}[ntn]{Proposition}
\newtheorem{thm}[ntn]{Theorem}
\newtheorem{cor}[ntn]{Corollary}
\theoremstyle{remark}
\newtheorem{rem}[ntn]{Remark}
\newtheorem{exa}[ntn]{Example}
\numberwithin{equation}{section}
\newcommand{\z}{\mathbb{Z}}
\newcommand{\q}{\mathbb{Q}}
\newcommand{\PP}{\mathbb{P}}
\newcommand{\F}{\mathbb{F}}
\newcommand{\EE}{\mathcal{E}}
\newcommand{\Aa}{\mathcal{A}}
\newcommand{\Bb}{\mathcal{B}}
\newcommand{\stabe}{{\rm Stab}}
\newcommand{\GL}{\mathit{{\rm GL}}}
\newcommand{\SL}{\mathit{{\rm SL}}}
\newcommand{\GM}{\mathit{{\rm GM}}}
\newcommand{\Aff}{{\rm Aff}}
\newcommand{\ppp}{\mathfrak{p}}
\newcommand{\mmm}{\mathfrak{m}}
\renewcommand{\H}{\tilde{H}}
\newcommand{\fff}{{R^\times}}
\newcommand{\rr}{{R^\times}}
\newcommand{\tors}{{{\rm Tor}_1^{\z}}}
\newcommand{\CC}{\tilde{C}}
\newcommand{\mt}{\mapsto}
\newcommand{\lan}{\langle}
\newcommand{\ran}{\rangle}
\newcommand{\se}{\subseteq}
\newcommand{\arr}{\rightarrow}
\newcommand{\larr}{\longrightarrow}
\newcommand{\two}{\twoheadrightarrow}
\renewcommand{\char}{{\rm char}}
\newcommand{\diag}{{\rm diag}}
\newcommand{\im}{{\rm im}}
\newcommand{\ind}{{\rm ind}}
\newcommand{\inc}{{\rm inc}}
\newcommand{\id}{{\rm id}}
\newcommand{\corr}{{\rm cor}}
\newcommand{\res}{{\rm res}}
\newcommand{\inv}{{\rm inv}}
\newcommand{\Conj}{{\rm Conj}}
\newcommand {\mtx}[4]
{\left(\!\!\!
\begin{array}{cc}
#1 & #2   \\
#3 & #4
\end{array}
\!\!\!\right)}
\newcommand {\mtxx}[4]
{\left(\!\!
\begin{array}{cc}
\!\!#1 & \!\!#2   \\
\!\!#3 & \!\!#4
\end{array}
\!\!\!\right)}
\newtheoremstyle{athm}
  {}
  {}
  {\itshape}
  {}
  {\scshape}
  {}
  {.5em}
  {\thmnote{#3}}
\theoremstyle{athm}
\begin{document}

\title[Bloch-Wigner theorem]{A Bloch-Wigner exact sequence over local rings}
\author{Behrooz Mirzaii}
\begin{abstract}
In this article we extend the Bloch-Wigner exact sequence over local rings,
where their residue fields have more than nine elements. Moreover, we prove
Van der Kallen's theorem on the presentation of the second $K$-group of  
local rings such that their residue fields have more than four elements. Note that
Van der Kallen proved this result when the residue fields have more than five 
elements.
Although we prove our results over local rings, all our proofs also work over
semilocal rings where all their residue fields have similar properties as
the residue field of local rings.
\end{abstract}
\maketitle

%%%%%%%%%%%%%%%%%%%%%%%%%%%%%%%%%%%%%%%%%%%%%%%%%%%%%%%%%%%%%%%%%%%%%%%%%%%%%%%%
\section*{Introduction}
%%%%%%%%%%%%%%%%%%%%%%%%%%%%%%%%%%%%%%%%%%%%%%%%%%%%%%%%%%%%%%%%%%%%%%%%%%%%%%%%

The Bloch-Wigner exact sequence studies the second and the third $K$-groups of a field. 
On the one hand, it gives Matsumoto's theorem on the presentation of the second $K$-group 
and on the other hand it gives a precise description of the indecomposable part
of the third $K$-group of the field.

This result, proved by Bloch and Wigner independently and in somewhat
different form, in one of its early forms gives the exact sequence
\[
0 \arr \q/\z \arr H_3(\SL_2(k),\z)  \arr  \ppp(k) \arr k^\times\wedge k^\times \arr 
H_2(\SL_2(k),\z)  \arr 0,
\]
where $k$ is an algebraically closed field of characteristic zero \cite{bloch2000},
\cite{dupont-sah1982}. Here $\ppp(k)$ is the 
pre-Bloch group of $k$, which is the free abelian group generated by the symbols $[a]$, 
$a \in k^\times -\{1\}$ up to defining relations
\[
[a]-[b]+\bigg[\frac{b}{a}\bigg]-\bigg[\frac{1-a^{-1}}{1-b^{-1}}\bigg]+\bigg[\frac{1-a}{1-b}\bigg]
\]
and the map $\ppp(k) \arr k^\times\wedge k^\times$ is given by $[a]\mt a\wedge (1-a)$. 
Moreover, the map  $k^\times\wedge k^\times=H_2(k^\times, \z) \arr H_2(\SL_2(k),\z)$
is induce by the map $k^\times \arr \SL_2(k)$, $a\mt \diag(a,a^{-1})$. 
For description of other maps involved in the above sequence see \cite[App. A]{dupont-sah1982}.

Using the homology stability theorem for the general or special linear groups of $k$, 
one can prove (see \cite{sah1989} or \cite{mirzaii-2008}) that 
\[
K_2(k)\simeq H_2(\SL_2(k),\z)\ \ \ \text{ and } \ \ \ K_3^\ind(k)\simeq H_3(\SL_2(k),\z).
\]
%(see \cite{sah1989} or \cite{mirzaii-2008}).
Note that $K_3^\ind(k)$ is the indecomposable part of the third $K$-group of $k$, which is 
the cockerel of the natural map from the third Milnor $K$-group $K_3^M(k)$ to the third 
$K$-group $K_3(k)$. Therefore the Bloch-Wigner exact sequence finds the following form
\[
0 \arr \q/\z \arr K_3^\ind(k) \arr \ppp(k) \arr k^\times\wedge k^\times \arr K_2(k) \arr 0.
\]

This exact sequence had many important applications and was the source of many deep 
ideas in algebraic $K$-theory. Thus it was very important to generalize it to a wider class 
of rings. In a remarkable paper, Suslin has generalized this exact sequence to all infinite 
fields. In fact, he showed that for any infinite field $F$ we have the Bloch-Wigner exact sequence
\[
0 \arr \tors(\mu(F),\mu(F))^\sim \arr K_3^\ind(F)  \arr  \ppp(F) \arr 
(F^\times\otimes_\z F^\times)_\sigma \arr K_2(F)  \arr 0,
\]
where the group $\tors(\mu(F),\mu(F))^\sim$ is the unique nontrivial extension of 
$\tors(\mu(F),\mu(F))$ by $\z/2$ and $(F^\times\otimes_\z F^\times)_\sigma: 
=F^\times\otimes_\z F^\times/\lan a\otimes b + b \otimes a: a, b \in F^\times\ran$.
In \cite{hutchinson2013} Hutchinson showed that the above exact sequence also holds over 
finite fields with more than three elements. His proof is different than Suslin's original 
proof and only works for finite fields.

This article should be seen as sequel to \cite{mirzaii2011}, \cite{mirzaii-mokari2015}, 
where a Bloch-Wigner exact sequence has been proved over rings with many units, e.g.
local or semilocal rings whose their residue fields are infinite.   
In this article, we will extend the Bloch-Wigner exact sequence over local rings,
where their residue fields have more than nine elements. But before proving this, we prove
Van der Kallen's generalization of Matsumoto's theorem on the presentation of the 
second $K$-group of local rings. 

More precisely, first we prove that if $R$ is a local ring with maximal ideal $\mmm_R$
and $R/\mmm_R$ has more than four elements, then we have the exact sequence 
\[
\ppp(R) \arr (\rr \otimes_\z \rr)_\sigma \arr K_2(R) \arr 0.
\]
This immediately implies that $K_2(R)\simeq K_2^M(R)$ (Proposition~\ref{mir-stability}),
where for us
\[
K_2^M(R):=\rr\otimes_\z \rr/\lan a \otimes (1-a), b \otimes (-b) \mid a, 1-a, b \in R^\times \ran.
\]
Note that when $R/\mmm_R$ has more than five elements, the term 
$b \otimes (-b)$ can be removed from the definition of $K_2^M(R)$ (Lemma~\ref{vdk00}).

Furthermore, we prove a Bloch-Wigner exact sequence over local rings. 
Let $|R/\mmm_R|>9$ and $|R/\mmm_R|\neq16,32$. If $R$ is a domain or is 
an algebra over a field we may only assume that $|R/\mmm_R|> 9$.
Then we will prove that we have the exact sequence
\[
T_R \arr K_3^\ind(R) \arr \ppp(R) \arr (\rr \otimes_\z \rr)_\sigma \arr K_2(R)\arr 0,
\]
where $T_R$ sits in the short exact sequence
\[
0 \arr \tors(\mu(R),\mu(R))_\sigma \arr T_R \arr
H_1(\Sigma_2, \mu_{2^\infty}(R)\otimes_\z\mu_{2^\infty}(R)) \arr 0.
\]
Moreover, if there is a homomorphism $R \arr F$, $F$ a field, such that the map
$\mu(R) \arr \mu(F)$ is injective, e.g. $R$ is a domain, then we have the exact sequence
\[
0 \arr \tors(\mu(R), \mu(R))^\sim \arr K_3^\ind(R)\arr \ppp(R) \arr (\rr \otimes_\z \rr)_\sigma 
\arr K_2(R) \arr 0,
\]
where the composition $\tors(\mu(R),\mu(R)) \arr \tors(\mu(R), \mu(R))^\sim \arr K_3^\ind(R)$ 
is induced by the map $\mu(R) \arr \SL_2(R)$,  $\xi \mt \diag(\xi, \xi^{-1})$.

We also prove Hutchinson's Bloch-Wigner exact sequence over finite 
fields \cite{hutchinson2013}. Let $F$ be a finite filed with at least
four elements. Then we prove that if $F\neq \F_4,\F_8$, then we have the exact sequence
\[
0 \arr \tors(\mu(F),\mu(F))^\sim \arr H_3(\SL_2(F))_{F^\times} \arr B(F) \arr 0,
\]
and if $F=\F_4$ or $\F_8$, then we have the exact sequence
\[
0 \arr \tors(\mu(F),\mu(F))\oplus \z/2 \arr H_3(\SL_2(F))_{F^\times} 
\arr B(F) \arr 0,
\]
where $B(F)$ is the Bloch group of $F$, i.e. 
$B(F):=\ker(\ppp(F) \arr (F^\times \otimes_\z F^\times)_\sigma)$.
Moreover, we show that if $F\neq \F_4,\F_8$, then
$H_3(\SL_2(F))_{F^\times}\simeq  K_3^\ind(F)$ and if $F=\F_4$ or $\F_8$,
then $H_3(\SL_2(F))_{F^\times}\simeq  K_3^\ind(F)\oplus \z/2$ and thus
we have the Bloch-Wigner exact sequence
\[
0 \arr \tors(\mu(F), \mu(F))^\sim \arr K_3^\ind(F) \arr B(F) \arr 0.
\]

For the proof of the above results we need certain strong homology stability results for the 
second and the third homology of general linear groups. Although homology stability results with
sharp stability bound is well-known for general linear groups of local rings with infinite residue 
fields \cite{nes-suslin1990}, we couldn't find such a results over local rings with finite 
residue fields. In Sections \ref{H2} and \ref{H3} we will prove certain stability results that
are good enough for our main applications (Theorems \ref{stability1}, \ref{sah-stability}). 

%The proof of our main results that we present here is similar to those given in 
%\cite{mirzaii2011} and \cite{mirzaii-mokari2015}. Here we have to be very careful 
%with the residue field of the local ring. 

It is worth to mention that almost all the results of this article are valid if we replace 
the local ring $R$ with a semilocal ring such that all its residue fields has the same 
property that $R/\mmm_R$ has.

In this paper we shall assume throughout that $R$ is a commutative local ring with 
maximal ideal $\mmm_R$ unless explicitly stated to the contrary. Moreover by the 
homology group $H_n(G)$ we will mean the homology of the group $G$ with integral 
coefficients, i.e. $H_n(G):=H_n(G,\z)$.

%%%%%%%%%%%%%%%%%%%%%%%%%%%%%%%%%%%%%%%%%%%%%%%%%%%%%%%%%%%%%%%
\section{The main spectral sequence}\label{SP}
%%%%%%%%%%%%%%%%%%%%%%%%%%%%%%%%%%%%%%%%%%%%%%%%%%%%%%%%%%%%%%%

Let $R$ be a commutative local ring with maximal ideal $\mmm_R$.
Let $C_l(R^2)$ be the free abelian group generated by
the set of all $(l+1)$-tuples 
$(\lan v_0\ran, \dots, \lan v_l\ran)$, where every $v_i \in R^2$ is a
basis of a direct summand of $R^2$ and any two disjoint vectors
$v_i, v_j$ are a basis of $R^2$. 
%Here by $\lan v_i\ran$ we mean the submodule 
%of $R^2$ generated by $v_i$, i.e. $\lan v_i\ran=Rv_i$. 
%The group $\GL_2(R)$ acts on $X_l(R^2)$ in a natural way. 
%
%Let $C_l(R^2)$ be the free abelian group generated by the set $X_l(R^2)$.
We consider $C_{l}(R^2)$ as a left $\GL_2(R)$-module in a natural way.
If necessary, we convert this action to a right action by the definition 
$m.g:=g^{-1}m$. Let us define the $l$-th differential operator
\[
\partial_l : C_l(R^2) \arr C_{l-1}(R^2), \ \ l\ge 1,
\]
as an alternating sum of face operators which throws away the $i$-th component 
of generators. Hence we have the complex
\begin{displaymath}
C_\bullet(R^2): \ 
\cdots \larr  C_2(R^2) \overset{\partial_2}{\larr}
C_1(R^2) \overset{\partial_1}{\larr} C_0(R^2) \arr 0.
\end{displaymath}
Let $\partial_{-1}=\epsilon: C_0(R^2) \arr \z$ be defined by
$\sum_i n_i(\lan v_{0,i}\ran) \mt \sum_i n_i$.
It is easy to see that $C_1(R^2) \arr C_0(R^2) \arr \z \arr 0$ is exact and
thus $H_0(C_\bullet(R^2))=\z$.

Let $G$ be a group and $L_\bullet$ a complex of left $G$-modules: 
\[
L_\bullet: \ \ \cdots \arr L_2 \arr L_1 \arr L_0\arr 0.
\]
The $n$-th homology of $G$ with coefficient in $L_\bullet$, denoted by 
\[
H_n(G, L_\bullet),
\]
is defined as the $n$-th homology of the total complex of the double complex 
$\mathcal{C}_\bullet(G) \otimes_{G} L_\bullet$, where $\mathcal{C}_\bullet(G)\arr \z$ 
is the standard resolution of $G$. This double complex 
%\mathcal{C}_\bullet(G) \otimes_{G} L_\bullet$, 
induces two spectral sequences
\[
\mathsf{E}_{p,q}^2(G)=H_p(G, H_q(L_\bullet))\Rightarrow H_{p+q}(G, L_\bullet),
\]
and 
\[
E_{p, q}^1(G)=H_q(G, L_p) \Rightarrow H_{p+q}(G, L_\bullet),
\]
(see \cite[\S 5, Chap. VII]{brown1994}).

\begin{lem}\label{homology-complex}
Let the complex $L_\bullet$ be exact for $1\leq  i \leq n$ and $M=H_0(L_\bullet)$. Then 
$H_i(G, L_\bullet)\simeq H_i(G, M)$ for $0 \leq  i \leq n$. 
\end{lem}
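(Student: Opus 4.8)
The plan is to exploit the first of the two hyperhomology spectral sequences recorded above, namely
\[
\mathsf{E}_{p,q}^2(G)=H_p(G, H_q(L_\bullet))\Rightarrow H_{p+q}(G, L_\bullet),
\]
whose $E^2$-page is assembled directly from the homology of the complex $L_\bullet$. By hypothesis $H_q(L_\bullet)=0$ for $1\le q\le n$ while $H_0(L_\bullet)=M$. Hence on the $E^2$-page the entire horizontal strip $1\le q\le n$ vanishes, and the bottom row reads $\mathsf{E}_{p,0}^2(G)=H_p(G,M)$.

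Next I would argue that, in each total degree $i\le n$, the spectral sequence collapses onto this bottom row. On the antidiagonal $p+q=i$ with $i\le n$, any entry with $q\ge 1$ satisfies $1\le q\le i\le n$ and therefore already vanishes on $E^2$, hence on every later page. It remains to check that the surviving corner term $\mathsf{E}_{i,0}^r(G)$ supports no nonzero differentials. The incoming differential $d^r\colon \mathsf{E}_{i+r,1-r}^r(G)\arr \mathsf{E}_{i,0}^r(G)$ originates in negative second degree $q=1-r<0$, so it is zero by the first-quadrant property. The outgoing differential $d^r\colon \mathsf{E}_{i,0}^r(G)\arr \mathsf{E}_{i-r,r-1}^r(G)$ has target in bidegree $(i-r,r-1)$: if $i-r<0$ this target is zero, and if $i-r\ge 0$ then $2\le r\le i\le n$ forces $1\le r-1\le n$, placing the target inside the vanishing strip. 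Consequently $\mathsf{E}_{i,0}^\infty(G)=\mathsf{E}_{i,0}^2(G)=H_i(G,M)$ for every $i\le n$, and every other $E^\infty$-term on the antidiagonal $p+q=i$ is zero.

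Finally I would read off the conclusion from convergence. For $i\le n$ the abutment $H_i(G,L_\bullet)$ carries a finite filtration whose graded pieces are the $\mathsf{E}_{p,q}^\infty(G)$ with $p+q=i$; all of these vanish except $\mathsf{E}_{i,0}^\infty(G)$, so the edge homomorphism
\[
H_i(G, L_\bullet)\arr \mathsf{E}_{i,0}^\infty(G)=H_i(G, M)
\]
is an isomorphism for $0\le i\le n$, as desired.

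The argument is essentially formal, so there is no single deep step; the only point demanding care is the bookkeeping in the middle paragraph, namely verifying that the vanishing strip $1\le q\le n$ is wide enough to kill \emph{both} the incoming and the outgoing differentials at every corner $\mathsf{E}_{i,0}^r(G)$ with $i\le n$. In particular one must notice that the first-quadrant constraint $i-r\ge 0$ pushes the outgoing target into the already-vanishing range $1\le r-1\le n$. An alternative route would be an induction on $n$ using a truncation of $L_\bullet$, but the spectral sequence argument is cleaner and settles all degrees $i\le n$ simultaneously.
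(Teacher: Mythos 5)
Your proof is correct and takes exactly the approach the paper intends: the paper's proof of this lemma is literally the one-line remark that it ``follows from an easy analysis of the spectral sequence $\mathsf{E}_{p,q}^2(G)$'', and your argument is precisely that analysis. The bookkeeping you carry out --- the vanishing strip $1\le q\le n$ killing all off-corner terms on each antidiagonal $p+q=i\le n$, the first-quadrant constraint killing incoming differentials, and the constraint $2\le r\le i\le n$ pushing outgoing targets into the vanishing strip --- correctly fills in the details the paper leaves to the reader.
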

\begin{proof}
This follows from an easy analysis of the spectral sequence $\mathsf{E}_{p,q}^2(G)$.
%\[
%\mathsf{E}_{p,q}^2(G)=H_p(G, H_q(L_\bullet))\Rightarrow H_{p+q}(G, L_\bullet).
%\]
\end{proof}

From the complex $C_\bullet(R^2)$ 
of $\GL_2(R)$-modules we obtain the first quadrant spectral sequence
\[
E_{p, q}^1:=E_{p, q}^1(\GL_2(R))=H_q(\GL_2(R), C_p(R^2)) 
\Rightarrow H_{p+q}(\GL_2(R), C_\bullet(R^2)).
\]
In this section we will study this spectral sequence for small values of $p$ and $q$. 

\begin{lem}[Hutchinson]\label{hutchinson1}
The complex $C_\bullet(R^2)$ is exact for $1\! \leq i <\! |R/\mmm_R|$ and
$H_0(C_\bullet(R^2))\simeq \z$.
% and thus
%. Thus for $0 \leq i < |R/\mmm_R|$,
%\[
%$H_i(\GL_2(R), C_\bullet(R^2))\simeq H_i(\GL_2(R))$.
%\]
\end{lem}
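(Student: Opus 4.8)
The plan is to build an explicit contracting homotopy for $C_\bullet(R^2)$ in the stated range, after first translating the general-position condition into a statement about the residue field. First I would observe that two unimodular vectors $v,w\in R^2$ form a basis of $R^2$ exactly when $\det(v\mid w)\in\rr$, which happens iff their images in $k:=R/\mmm_R$ are linearly independent, i.e.\ iff $\lan\bar v\ran\neq\lan\bar w\ran$ in $\PP^1(k)$. Hence a generator $(\lan v_0\ran,\dots,\lan v_l\ran)$ of $C_l(R^2)$ is precisely an $(l+1)$-tuple of summands whose reductions give $l+1$ \emph{distinct} points of $\PP^1(k)$, and in particular $C_l(R^2)=0$ once $l+1>|\PP^1(k)|=|R/\mmm_R|+1$. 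The case $H_0\simeq\z$ is already recorded above, so the real work is to prove $H_i(C_\bullet(R^2))=0$ for $1\le i<|R/\mmm_R|$.

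The key device is a \emph{fresh vertex} operator. Given a summand $\lan u\ran$, prepending it to a tuple yields an admissible generator of $C_{l+1}(R^2)$ exactly when $\lan u\ran$ is in general position with every $\lan v_j\ran$, i.e.\ when $\bar u$ avoids the reductions of all the $v_j$. For a fixed cycle $z\in C_i(R^2)$ with $\partial_i z=0$, let $S\se\PP^1(k)$ be the finite set of points occurring among the vertices of $z$. If $S\neq\PP^1(k)$, I would choose any summand $\lan u\ran$ reducing to a point of $\PP^1(k)\setminus S$; then $u$ is in general position with every vertex of $z$, the element $u\ast z$ obtained by prepending $\lan u\ran$ to each generator lies in $C_{i+1}(R^2)$, and the simplicial identity gives $\partial_{i+1}(u\ast z)=z-u\ast\partial_i z=z$. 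Thus any cycle whose support omits at least one point of $\PP^1(k)$ is automatically a boundary.

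The remaining, and main, difficulty is a cycle $z\in C_i(R^2)$ with $1\le i<|R/\mmm_R|$ whose combined support is \emph{all} of $\PP^1(k)$; here no single fresh vertex is available. The hypothesis $i<|R/\mmm_R|$ does force each individual generator of $z$ to omit at least one point of $\PP^1(k)$, since it has only $i+1\le|R/\mmm_R|$ vertices, and I would exploit this to shrink the support of $z$ modulo boundaries. Fixing a point $p_0\in\PP^1(k)$ and filtering $C_\bullet(R^2)$ according to whether a generator uses a summand reducing to $p_0$, one obtains a short exact sequence of complexes whose quotient is a sum of copies of the analogous general-position complex for tuples avoiding $p_0$; feeding this into the associated long exact sequence, together with the fresh-vertex argument applied inside the subcomplex of tuples avoiding $p_0$, lets me replace $z$ by a homologous cycle that uses strictly fewer points, and then descend by induction on $|S|$ until $S\neq\PP^1(k)$ and the previous paragraph finishes the argument. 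The bookkeeping in this inductive step --- checking that the boundary corrections stay among admissible generators and that the number of still-available points never drops below what the fresh-vertex operator requires --- is the part that genuinely uses the bound $i<|R/\mmm_R|$, and is where I expect the real work to lie.
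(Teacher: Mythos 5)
Your opening reductions are sound: over a local ring two unimodular vectors span $R^2$ precisely when their reductions are linearly independent over $k=R/\mmm_R$, so $C_l(R^2)$ is spanned by $(l+1)$-tuples of lifts of \emph{distinct} points of $\PP^1(k)$, and the fresh-vertex identity $\partial_{i+1}(u\ast z)=z-u\ast\partial_i(z)=z$ correctly disposes of any cycle whose \emph{total} support misses a point of $\PP^1(k)$. But that is the easy half. The case of a cycle $z$ whose support is all of $\PP^1(k)$ is where the lemma actually lives, and there your argument is both incomplete and structurally off. The quotient of $C_\bullet(R^2)$ by the subcomplex $A_\bullet$ of tuples avoiding $p_0$ is \emph{not} a sum of copies of the analogous complex on $\PP^1(k)\setminus\{p_0\}$: since $p_0$ has many distinct lifts $\lan v\ran$ over $R$, the quotient splits as a direct sum, over these lifts, of complexes of tuples containing the fixed vertex $\lan v\ran$ with the face deleting $\lan v\ran$ killed. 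The homology of such a summand is an extension involving $H_{i-1}(A_\bullet)$ and $H_i$ of the cone on $A_\bullet$ with apex $\lan v\ran$, and proving that this cone is acyclic is exactly the original problem: the apex cannot be repeated (two vertices over $p_0$ are never in general position), so there is no contraction by coning with $\lan v\ran$ --- the very obstruction you started from. Your induction on the support size therefore does not run over a class of complexes closed under your own filtration step, and the ``bookkeeping'' you defer at the end is not bookkeeping; it is the missing proof.

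The paper's proof (the citation to \cite[Lemma~3.21]{hutchinson2014}; the same mechanism is written out in full for $\widehat{C}_\bullet(R^3)$ in the proof of Lemma~\ref{R3-exact}) rests on one further idea that your proposal never reaches: instead of seeking a single vertex in general position with all of $z$, one iterates \emph{partial} conings with respect to several points. Choose $i+2$ distinct points $x_0,\dots,x_{i+1}$ of $\PP^1(k)$ --- this is precisely where $i<|R/\mmm_R|$ enters, rather than through each generator of $z$ omitting a point --- fix lifts $v_{x_j}$, and let $s_{x_j}$ prepend $\lan v_{x_j}\ran$ to generators avoiding $x_j$ while annihilating all other generators. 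One checks that $T_j:=\partial\circ s_{x_j}-\id$ carries a cycle all of whose terms contain $x_0,\dots,x_{j-1}$ to a cycle all of whose terms contain $x_0,\dots,x_j$. Consequently $T_{i+1}\circ\cdots\circ T_0(z)$ is a cycle in degree $i$ each of whose generators would have to contain $i+2$ distinct points among its $i+1$ entries, hence it is zero; expanding the composite (every cross term has $\partial\circ s_{x_j}$ as its leftmost factor, so lies in the image of $\partial$) yields $\partial(y)=\pm z$. Without this iteration, or a correctly set-up substitute for it, your proposal proves exactness only for cycles with proper support.
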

\begin{proof}
See \cite[Lemma 3.21]{hutchinson2014}. 
\end{proof}

For simplicity, in the rest of this section we will assume
\[
{\bf \infty}:=\lan e_1 \ran, \ \  {\bf 0}:=\lan e_2 \ran, \ \ {\bf 1}:=\lan e_1+e_2 \ran, \ \
{\bf b^{-1}}:=\lan e_1+be_2 \rangle, \ b \in \rr. 
\]
Since
$\GL_2(R)$ acts transitively on the sets of generators of $C_i(R^2)$ for $i=0,1,2$,
by the Shapiro lemma we have
\begin{align*}
& E_{0,q}^1 \simeq H_q(\stabe_{\GL_2(R)}(\infty))=H_q(B_2),\\
& E_{1,q}^1 \simeq H_q(\stabe_{\GL_2(R)}({ \infty, {\bf 0}}))=H_q(T_2),\\
& E_{2,q}^1 \simeq H_q(\stabe_{\GL_2(R)}({ \infty, {\bf 0},{\bf 1}}))=H_q(\rr),
\end{align*}
where $B_2:={\mtx \rr R 0 \rr}$ and $T_2:={\mtx \rr 0 0 \rr}\simeq \rr \times \rr$. 
Moreover, the orbits of the action of $\GL_2(R)$ on $C_3(R^2)$ and $C_4(R^2)$ 
are given by the frames  $p(a):=({ {\bf \infty, 0, 1, a^{-1}}})$, and 
$q(a,b):=({ {\bf \infty, 0, 1, a^{-1}, b^{-1}}})$, respectively, where 
$a, 1-a, b, 1-b, a-b \in \rr$.
Therefore
\begin{align*}
& 
\begin{array}{c}
E_{3,q}^1\simeq \bigoplus_{a \in \Aa} H_q(\rr).p(a),
%= H_q(\rr)\otimes\bigg(\bigoplus_{a \in \Aa} \z.p(a)\bigg),
\end{array}\\
&
\begin{array}{c} 
E_{4,q}^1\simeq\bigoplus_{(a, b) \in \Bb} H_q(\rr).q(a, b),
%=H_q(\rr)\otimes \bigg(\bigoplus_{(a, b) \in \Bb} \z.q(a, b)\bigg),
\end{array}
\end{align*}
where $\Aa:=\{a \in R: a, 1-a \in \rr\}$ and 
$\Bb:=\{(a,b)\in R^2: a, 1-a, b, 1-b, a-b \in \rr\}$.

If $N_2:=\mtx 1 R 0 1 \se B_2={\mtx \rr R 0 \rr}$,
then 
%clearly $N_2\simeq R$ and
%\[
$B_2=N_2 \rtimes T_2\simeq R \rtimes T_2$,
%\]
where the action of $T_2$ on $R$ is given by $(a,b).x:=ab^{-1}x$.
This clearly implies that for any $n \geq 0$, 
\[
H_n(B_2)\simeq H_n(T_2) \oplus A_n,
\]
where $A_n$ is the kernel of  $H_n(B_2) \arr H_n(T_2)$ which is induced by the
natural surjective map $B_2 \arr T_2$.

\begin{lem}\label{T2-B2}
%Let the residue field of $R$ has at least tree elements. Then
If $|R/\mmm_R|\neq 2$, then
$H_1(B_2)\simeq H_1(T_2)$, $H_2(B_2)\simeq H_2(T_2) \oplus H_2(R)_\rr$ and
$H_3(B_2)\simeq H_3(T_2) \oplus A_3$, where $A_3$ sits in the exact sequence
\[
H_2(T_2, H_2(R)) \arr H_3(R)_\rr \arr A_3 \arr H_1(T_2, H_2(R)) \arr 0.
\]
Here the action of $\rr$ on $H_i(R)$ is induced by the natural action of $\rr$
on $R$, i.e. $a.r:=ar$.
\end{lem}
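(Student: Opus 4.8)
The plan is to feed the split extension $1 \arr N_2 \arr B_2 \arr T_2 \arr 1$ into the Lyndon--Hochschild--Serre spectral sequence. Here $N_2 \simeq R$ is the additive group, and the conjugation action of $T_2$ on $N_2$ is precisely $(a,b).x = ab^{-1}x$, so the same action is induced on $H_q(N_2) = H_q(R)$. Thus the spectral sequence reads
\[
E_{p,q}^2 = H_p(T_2, H_q(R)) \Rightarrow H_{p+q}(B_2).
\]
Because the inclusion $T_2 \arr B_2$ splits the projection, every differential emanating from the bottom row dies, so $E_{p,0}^2 = E_{p,0}^\infty = H_p(T_2)$; these terms constitute the summand $H_n(T_2)$ already isolated before the lemma, and therefore $A_n$ is exactly the part of $H_n(B_2)$ assembled from the entries $E_{p,q}^\infty$ with $p+q = n$ and $q \geq 1$. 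Note also that, since the map $T_2 \arr \rr$, $(a,b) \mapsto ab^{-1}$, is surjective and the action on each $H_q(R)$ factors through it, one has $H_q(R)_{T_2} = H_q(R)_\rr$.

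The decisive step is to annihilate the entire row $q = 1$, where $H_1(R) = R$ as a $T_2$-module. I would apply the center-kills principle: for a central $c \in T_2$, multiplication by $c$ induces the identity on $H_*(T_2, R)$, so whenever $c - 1$ acts invertibly on $R$ the whole of $H_*(T_2, R)$ vanishes. Since $|R/\mmm_R| \neq 2$, the residue field contains an element different from $0$ and $1$, whence there is $u \in \rr$ with $u - 1 \in \rr$; the element $c = (u,1)$ then acts on $R$ by the unit $u$, so $c - 1$ acts by the unit $u-1$. Consequently $E_{p,1}^2 = H_p(T_2, H_1(R)) = 0$ for all $p$. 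This single vanishing is the crux of the proof and the only place the hypothesis on the residue field is used.

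With the row $q = 1$ gone, the three assertions fall out by inspection of the sparse $E^2$-page. For $n = 1$ the only possible contribution is $E_{0,1}^\infty = H_1(R)_\rr = 0$, giving $H_1(B_2) \simeq H_1(T_2)$. For $n = 2$ the term $E_{1,1}^\infty$ is now zero, so $A_2 = E_{0,2}^\infty$; as $E_{0,2}$ admits no nonzero differential (its only candidate incoming $d_2$ starts at $E_{2,1}^2 = 0$), we obtain $A_2 = H_2(R)_\rr$. For $n = 3$ the term $E_{2,1}^\infty$ vanishes, so the filtration of $A_3$ collapses to a short exact sequence $0 \arr E_{0,3}^\infty \arr A_3 \arr E_{1,2}^\infty \arr 0$. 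Here $E_{1,2}^\infty = E_{1,2}^2 = H_1(T_2, H_2(R))$, while $E_{0,3}^\infty = \coker\big(d_2 \colon E_{2,2}^2 \arr E_{0,3}^2\big)$ because the only other incoming differential, a $d_3$ from $E_{3,1}^2 = 0$, is trivial. Substituting $E_{2,2}^2 = H_2(T_2, H_2(R))$ and $E_{0,3}^2 = H_3(R)_\rr$ splices this into the claimed four-term exact sequence, whose first arrow is the differential $d_2$.

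The main obstacle is organizational rather than computational: once the center-kills input clears the row $q = 1$, the $E^2$-page is sparse enough that for $A_3$ the only surviving nonzero differential is the single $d_2 \colon H_2(T_2, H_2(R)) \arr H_3(R)_\rr$. Correctly verifying the invertibility hypothesis of the center-kills lemma and then tracking this one differential through the filtration is the substantive part; the rest is routine bookkeeping with the spectral sequence.
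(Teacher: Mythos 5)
Your proof is correct and follows essentially the same route as the paper: the Lyndon--Hochschild--Serre spectral sequence of $0 \arr R \arr B_2 \arr T_2 \arr 1$, with the hypothesis $|R/\mmm_R|\neq 2$ used exactly once, to kill the row $q=1$, after which the stated isomorphisms and the four-term exact sequence for $A_3$ are read off the sparse $E^2$-page just as you do. The only cosmetic difference is the mechanism for that vanishing: you run the center-kills argument directly with the central element $(u,1)$, $u,u-1\in\rr$, whereas the paper invokes Suslin's vanishing lemma (Lemma~\ref{lem-sus}) for $\varphi\colon T_2\arr \rr$, $(a,b)\mt ab^{-1}$, whose proof is precisely your argument.
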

\begin{proof}
From the extension  $0 \arr R \arr B_2 \arr T_2\arr 1$,
we obtain the Lyndon-Hochschild-Serre spectral sequence 
\[
\EE_{r,s}^2=H_r(T_2,H_s(R))\Rightarrow H_{r+s}(B_2).
\]
Since $R/\mmm_R$ has at least three elements, 
there is $a \in \rr$ such that $a-1 \in \rr$. Then 
\[
H_0(T_2,H_1(R))=H_0(T_2,R)=R/\lan a-1:a \in \rr\ran=0,
\]
and so by Lemma~\ref{lem-sus} below (for $\varphi: T_2\arr \rr$, $(a,b)\mt ab^{-1}$), 
for any $r\ge 0$, we have $\EE_{r,1}^2=H_r(T_2,H_1(R))=0$.
Now by an easy analysis of the above spectral sequence we 
obtain the desired results.
\end{proof}

\begin{lem}[Suslin]\label{lem-sus}
Let $G$ be an abelian group, $A$ a commutative ring, $M$ an $A$-module and 
$\varphi: G \arr A^\times$ a homomorphism of groups which turns $A$ and $M$ 
into $G$-modules.  If $H_0(G,A)=0$, then for any
$n\geq 0$, $H_n(G,M)=0$.
\end{lem}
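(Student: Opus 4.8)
The plan is to exploit the commutativity of $A$ together with the fact that inner automorphisms act trivially on group homology. First I would unwind the hypothesis $H_0(G,A)=0$. Since $G$ acts on $A$ through $\varphi$, we have $H_0(G,A)=A/\lan (\varphi(g)-1)a : g\in G,\ a\in A\ran$, so the hypothesis says precisely that the ideal $I=\lan \varphi(g)-1 : g \in G\ran$ equals $A$. In particular there are finitely many $g_1,\dots,g_k\in G$ and $a_1,\dots,a_k\in A$ with $\sum_{i=1}^k a_i(\varphi(g_i)-1)=1$.

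Next I would set up a ring homomorphism that records how $A$ acts on homology. For $c\in A$, multiplication by $c$ is an endomorphism $\mathrm{mult}_c\colon M\arr M$; because $A$ is commutative it commutes with the $G$-action on $M$ (which is multiplication by $\varphi(h)$), so $\mathrm{mult}_c$ is a map of $G$-modules and induces $(\mathrm{mult}_c)_*$ on $H_n(G,M)$. Since $H_n(G,-)$ is an additive functor, the assignment $c\mapsto (\mathrm{mult}_c)_*$ is a ring homomorphism $A\arr \mathrm{End}_\z(H_n(G,M))$ carrying $1$ to $\id$.

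The key step is to show that for every $g\in G$, multiplication by the unit $\varphi(g)$ induces the identity on $H_n(G,M)$. Here I would invoke the standard fact (see \cite{brown1994}) that the pair consisting of conjugation $c_g\colon h\mapsto ghg^{-1}$ on $G$ together with the action map $m\mapsto g\cdot m=\varphi(g)m$ on $M$ induces the identity on $H_n(G,M)$, since inner automorphisms act trivially on homology. Because $G$ is abelian, $c_g=\id$, so this pair is exactly $(\id,\mathrm{mult}_{\varphi(g)})$, and its induced map is $(\mathrm{mult}_{\varphi(g)})_*$. Hence $(\mathrm{mult}_{\varphi(g)})_*=\id$, i.e. $\varphi(g)\mapsto \id$ under the ring homomorphism above. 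I expect this to be the main point of the argument, as it is where the abelianness of $G$ is essential.

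Finally I would apply the ring homomorphism to the relation $\sum_i a_i(\varphi(g_i)-1)=1$. Since each $\varphi(g_i)\mapsto\id$, we get $\varphi(g_i)-1\mapsto 0$, hence $a_i(\varphi(g_i)-1)\mapsto 0$ for every $i$, and summing gives $0$. On the other hand the right-hand side $1$ maps to $\id$. Therefore $\id=0$ in $\mathrm{End}_\z(H_n(G,M))$, which forces $H_n(G,M)=0$ for every $n\geq 0$.
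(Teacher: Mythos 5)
Your proof is correct, and it is essentially the argument behind the paper's own treatment: the paper gives no proof at all, deferring to \cite[Lemma~1.8]{nes-suslin1990}, and your argument is precisely the standard one from that source — $H_n(G,M)$ is an $A$-module via the ring homomorphism $c \mapsto (\mathrm{mult}_c)_*$, triviality of conjugation on homology together with commutativity of $G$ forces each $\varphi(g)$ to act as the identity, and the relation $\sum_i a_i(\varphi(g_i)-1)=1$ supplied by $H_0(G,A)=0$ then yields $\id=0$ on $H_n(G,M)$. No gaps: the additivity of $H_n(G,-)$ on morphisms and the compatibility of $(\,c_g,\,m\mapsto gm\,)$ needed for the conjugation step are both standard and correctly invoked.
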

\begin{proof}
See \cite[Lemma~1.8]{nes-suslin1990}.
\end{proof}

The pre-Bloch group $\ppp(R)$ of a commutative ring $R$ is the quotient of
the free abelian group $Q(R)$ generated by symbols $[a]$, $a, 1-a \in \rr$,
by the subgroup generated by elements of the form
\[
[a] -[b]+\bigg[\frac{b}{a}\bigg]-\bigg[\frac{1- a^{-1}}{1- b^{-1}}\bigg]
+ \bigg[\frac{1-a}{1-b}\bigg],
\]
where $a, 1-a, b, 1-b, a-b  \in \rr$. Let the map
$\lambda: Q(R) \arr \rr \otimes \rr$ be defined by $[a] \mapsto a \otimes (1-a)$.
Then by a direct computation we have
\[
\lambda\Big(
[a] -[b]+\bigg[\frac{b}{a}\bigg]-\bigg[\frac{1- a^{-1}}{1- b^{-1}}\bigg]
+ \bigg[\frac{1-a}{1-b}\bigg] \Big)
=a \otimes \bigg( \frac{1-a}{1-b}\bigg)+\bigg(\frac{1-a}{1-b}\bigg)\otimes a.
\]
Let 
\[
(\rr \otimes \rr)_\sigma :=\rr \otimes \rr/
\lan c\otimes d + c\otimes d: c, d \in \rr \ran.
\]
We denote the elements of $\ppp(R)$ and $(\rr \otimes \rr)_\sigma$
represented by $[a]$ and $a\otimes b$ again by $[a]$ and $a\otimes b$,
respectively. Thus we have the well-defined map, denoted again by $\lambda$,
\[
\lambda: \ppp(R) \arr (\rr \otimes \rr)_\sigma, \ \ \
[a] \mapsto a \otimes (1-a).
\]
The kernel of $\lambda$ is called the Bloch group of $R$ and is
denoted by $B(R)$. Thus we obtain the exact sequence
\[
0 \arr B(R) \arr \ppp(R) \arr (\rr \otimes \rr)_\sigma \arr K_2^{MS}(R) \arr 0,
\]
where
\[
K_2^{MS}(R):= \rr \otimes \rr/
\lan a \otimes(1-a), b\otimes c + c\otimes b: a,1-a, b, c \in \rr \ran.
\]

The $n$-th Milnor $K$-group of a commutative ring $R$
is defined as the abelian group $K_n^M(R)$ generated by symbols
$\{a_1, \dots, a_n\}$, $a_i \in \rr$,  $i=1, \dots, n$,
subject to the following relations
\medskip
\par (i) $\{a_1, \dots,a_ia_i', \dots, a_n\!\}\!= 
\!\{a_1, \dots,a_i, \dots, a_n\!\}\!+\!
\{a_1, \dots,a_i', \dots, a_n\!\}$, any $i$,
\medskip
\par (ii) $\{a_1,  \dots, a_n\}=0$ if there exist $i,j$, $ i \neq j$,
such  that $a_i+a_j=0$ or $1$.
\medskip
~\\
Clearly we have the anti-commutative product map
\[
K_m^M(R) \otimes_\z K_n^M(R) \arr K_{m+n}^M(R),
\]
\[ 
\{a_1, \dots,a_m\}\otimes \{b_1, \dots, b_n\} \mt \{a_1, \dots,a_m, b_1, \dots, b_m\}.
\] 
Since in $\rr \otimes \rr$ we have
$b\otimes c + c\otimes b=bc \otimes(-bc)- b\otimes (-b) - c\otimes (-c)$,
the natural map  $K_2^{MS}(R) \arr K_2^{M}(R)$ is surjective. 

\begin{lem}\label{vdk00}
%Let $R$ be a local ring. 
\par {\rm (i)} Let $R$ be either a field or a local ring with $|R/\mmm_R|\neq 2$.
Then $K_2^{M}(R)\simeq K_2^{MS}(R)$. 
\par {\rm (ii)} 
Let $R$ be either a field or a local ring with $|R/\mmm_R|>5$. Then 
\[
K_2^{M}(R)\simeq \rr \otimes \rr/
\lan a \otimes(1-a): a,1-a \in \rr \ran.
\]
In particular, $K_n^M(R)\simeq \Big(\bigotimes_\z^n \rr \Big)/T$, where $T$ is 
the subgroup generated by the elements $a_1\otimes \cdots \otimes a_n$ such 
that there are $i,j$, $i\neq j$, with $a_i+a_j=1$.
\end{lem}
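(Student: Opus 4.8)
The plan is to reduce both parts to the single assertion $b\otimes(-b)=0$ in the appropriate quotient of $\rr\otimes\rr$, using only bilinearity (which descends to every quotient of $\rr\otimes\rr$) together with the Steinberg relation $a\otimes(1-a)=0$. For (i) the surjection $K_2^{MS}(R)\arr K_2^M(R)$ recorded above is an isomorphism exactly when the two defining subgroups coincide; the identity $b\otimes c+c\otimes b=bc\otimes(-bc)-b\otimes(-b)-c\otimes(-c)$ already gives $\lan a\otimes(1-a),\,c\otimes d+d\otimes c\ran\se\lan a\otimes(1-a),\,b\otimes(-b)\ran$, so it remains to prove the reverse inclusion, namely that $b\otimes(-b)=0$ in $K_2^{MS}(R)$ for every $b\in\rr$. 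For (ii), writing $K_2'(R):=\rr\otimes\rr/\lan a\otimes(1-a):a,1-a\in\rr\ran$, the natural map $K_2'(R)\arr K_2^M(R)$ is visibly onto and is an isomorphism precisely when $b\otimes(-b)=0$ holds in $K_2'(R)$. The crucial difference is that in $K_2'(R)$ the antisymmetry relation $c\otimes d+d\otimes c=0$ is \emph{not} available and must itself be manufactured from the Steinberg relations.

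For part (i) I would first dispose of the units $b$ with $1-b\in\rr$: from $-b=(1-b)(1-b^{-1})^{-1}$ and bilinearity one gets $b\otimes(-b)=b\otimes(1-b)-b\otimes(1-b^{-1})$, where the first term vanishes by Steinberg, and the second vanishes because $b\otimes(1-b^{-1})=-\,b^{-1}\otimes(1-b^{-1})$ (using $1\otimes x=0$) while $b^{-1}\otimes(1-b^{-1})=0$ by Steinberg. This step needs only bilinearity and the Steinberg relation. The remaining units satisfy $b\equiv1\pmod{\mmm_R}$; here I would use $|R/\mmm_R|\geq 3$ to pick a unit $c$ whose residue lies outside $\{0,1\}$, so that $c$ and $bc$ both enjoy the good property, and expand
\[
bc\otimes(-bc)=b\otimes(-b)+c\otimes(-c)+\big(b\otimes c+c\otimes b\big).
\]
In $K_2^{MS}(R)$ antisymmetry kills the bracketed term, while $bc\otimes(-bc)$ and $c\otimes(-c)$ vanish by the good case; hence $b\otimes(-b)=0$, which finishes (i).

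For part (ii) the good case $1-b\in\rr$ goes through verbatim, as that computation used only bilinearity and Steinberg. The difficulty is concentrated in the units with $b\equiv1\pmod{\mmm_R}$, where the symmetric term can no longer be discarded. Setting $s(b,c):=b\otimes c+c\otimes b$, the same expansion now yields $b\otimes(-b)=-s(b,c)$ for every good $c$ (residue outside $\{0,1\}$, which also makes $bc$ good since $\overline{bc}=\bar c$). As the left side is independent of $c$ and $s(b,-)$ is additive, I obtain $s(b,cd^{-1})=s(b,c)-s(b,d)=0$ for all good $c,d$, so $s(b,-)$ annihilates the subgroup of $\rr$ generated by ratios of units with residue outside $\{0,1\}$. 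The main obstacle is to show that this subgroup exhausts $\rr$: this is exactly the point at which the size of $R/\mmm_R$ is used, since one must realize an arbitrary residue as such a ratio, which can fail for very small residue fields but is guaranteed once $|R/\mmm_R|$ is large enough. Granting this, $s(b,c)=0$, whence $b\otimes(-b)=0$ in $K_2'(R)$; then $z\otimes(-z)=0$ for all $z$ feeds back into the identity $xy\otimes(-xy)=x\otimes(-x)+y\otimes(-y)+(x\otimes y+y\otimes x)$ to force antisymmetry, giving $K_2'(R)\simeq K_2^M(R)$. Finally the assertion for $K_n^M$ is formal: multilinearity isolates any pair $a_i,a_j$ with $a_i+a_j=0$ and rewrites the relevant factor as $a_i\otimes(-a_i)$, which is now zero modulo the relations $a_k+a_l=1$, so those relations alone cut out $K_n^M(R)$ inside $\bigotimes_\z^n\rr$.
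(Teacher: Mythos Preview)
Your argument for part~(i) is correct and essentially identical to the paper's: first handle $b$ with $1-b\in\rr$ via $-b=(1-b)(1-b^{-1})^{-1}$, then treat the remaining units (those with $\bar b=1$) by picking $c$ with $\bar c\notin\{0,1\}$ and expanding $bc\otimes(-bc)$, using antisymmetry in $K_2^{MS}(R)$ to kill the cross term.

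For part~(ii) there is nothing in the paper to compare against: the paper simply refers to Steps~1--5 of the proof of \cite[Theorem~8.4]{vdkallen1977}. Your direct argument is different in spirit and is correct. The only point you leave unproved is the claim that the subgroup of $\rr$ generated by ratios $c_1c_2^{-1}$ of good units (those with residue outside $\{0,1\}$) is all of $\rr$. This is easy to verify: given any $x\in\rr$, one needs a good $c$ with $xc$ also good, i.e.\ $\bar c\notin\{0,1,\bar x^{-1}\}$, which is possible once $|R/\mmm_R|\geq 4$. In particular every good $c_0$ is itself such a ratio, so $s(b,c_0)=0$ and hence $b\otimes(-b)=0$. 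Thus your approach actually yields the conclusion under the weaker hypothesis $|R/\mmm_R|\geq 4$, stronger than the stated $|R/\mmm_R|>5$ coming from van~der~Kallen's method. Your reduction of the $K_n^M$ assertion to the $n=2$ case is also correct: once $b\otimes(-b)\in\langle a\otimes(1-a)\rangle$ in $\rr\otimes\rr$, one may substitute this identity into any two slots of an $n$-fold tensor to see that the relations $a_i+a_j=0$ lie in $T$.
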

\begin{proof}
(i) We denote the element of $K_2^{MS}(R)$ represented by $a\otimes b$, by 
$\{a,b\}_{MS}$. Thus it is sufficient to prove that for any $a\in \rr$, $\{a,-a\}_{MS}=0$.
If $1-a \in \rr$, then $-a=(1-a)/(1-a^{-1})$ and so 
\begin{align*}
\{a,-a\}_{MS} =\{a,1-a\}_{MS}-\{a,1-a^{-1}\}_{MS}=\{a^{-1},1-a^{-1}\}_{MS}=0.
\end{align*}
This covers the case of fields. Let $R$ be local and $1-a \notin \rr$. Since 
$|R/\mmm_R|\neq 2$, there is always $c \in \rr$ such that $c, 1-c, 1-ac\in \rr$. 
Then by the above argument $\{c,-c\}_{MS}=\{ac,-ac\}_{MS}=0$ and thus 
\begin{align*}
\{a,-a\}_{MS}&= \{ac,-ac\}_{MS}-\{c,-c\}_{MS}-\{a,c\}_{MS}-\{c,a\}_{MS}=0.
\end{align*}
(ii) This can be done as Steps 1-5 in the proof of \cite[Theorem~8.4]{vdkallen1977}.
\end{proof}

%Thus if $|R/\mmm_R|\neq 2$, then we have the exact sequence 
%\[
%0 \arr B(R) \arr \ppp(R) \arr (\rr \otimes \rr)_\sigma \arr K_2^M(R) \arr 0.
%\]

Now let $|R/\mmm_R|\neq 2$. Then by Lemmas \ref{hutchinson1} and \ref{homology-complex}, 
for $n=0,1,2$ we have
\[
H_{n}(\GL_2(R), C_\bullet(R^2))\simeq H_{n}(\GL_2(R)).
\]
If $|R/\mmm_R|\neq 2, 3$, then we also have the above isomorphism for $n=3$.

%In this section we will study the differentials of the spectral sequence $E_{p,q}^1$ for
%small values of $p$ and $q$. 
Now we study the differentials of the spectral sequence $E_{p,q}^1$ for
small values of $p$ and $q$. It is not difficult to see that 
\[
d_{1, q}^1=H_q(\sigma) - H_q(\sigma'),
\]
where $\sigma, \sigma': T_2 \arr B_2$ are given by $\sigma(a, b)= \diag(b, a)$ and 
$\sigma'(a, b)= \diag(a, b)$.
Moreover,
\[
d_{2,q}^1=H_q(\Delta),
\]
where $\Delta: \rr \arr T_2$ is the diagonal map $a \mt (a,a)$. Hence $d_{2,q}^1$ 
always is injective. On the other hand, by a direct computation one can show that 
for any $z \in H_q(\rr)$,
\[
d_{3,q}^1\Big(z.p(a)\Big)=0
\]
and
\[
d_{4,q}^1\Big(z.q(a,b)\Big)= z.\bigg(p(a)-p(b)+p\bigg(\frac{b}{a}\bigg)
-p\bigg(\frac{1- a^{-1}}{1- b^{-1}}\bigg)
+ p\bigg(\frac{1-a}{1-b}\bigg)\bigg).
\]
Putting all these together, the 
${E}^2$-terms of our spectral sequence look as follow:
\begin{gather*}
\begin{array}{cccccc}
\ast        &          &          &        &  & \\
&&&&&\\  
H_3(T_2)_\sigma \oplus A_3 &   H_3(T_2)^\sigma/H_3(\rr)   &  0       &        &  &\\ 
&&&&&\\  
H_2(T_2)_\sigma\oplus H_2(R)_\rr &(\rr \otimes \rr)^\sigma &0 &H_2(\rr)\otimes \ppp(R)& &\\
&&&&&\\  
\rr         &   0      &  0       &  \rr \otimes \ppp(R)     & \ast & \\
&&&&&\\  
\z          &   0      &  0       &  \ppp(R)  & \ast & 
\end{array}
\end{gather*}
Note that $H_2(T_2)_\sigma\simeq  H_2(\rr) \oplus (\rr \otimes \rr)_\sigma$.

For an arbitrary group $G$, let $\mathcal{C}_\bullet(G)\overset{\epsilon}{\arr}\z$ 
and $\mathcal{B}_\bullet(G)\overset{\varepsilon^G}{\arr}\z$ denote the (left) standard and 
the (left) bar resolution of $G$, respectively.
We turn $\mathcal{C}_n(G)$ and $\mathcal{B}_n(G)$ into a right $G$-module in usual way.
Note that the map
\[
\mathcal{C}_n(G) \arr \mathcal{B}_n(G), \ \
(g_0, \dots, g_n) \mt
[g_0g_1^{-1}|g_1g_2^{-1}|\dots|g_{n-2}g_{n-1}^{-1}|g_{n-1}g_{n}^{-1}]
\]
induces the identity map of $H_n(G)$. For simplicity the element of $H_n(G)$ represented by 
$\sum m[g_1|\dots|g_n]$ again is denoted by $\sum m[g_1|\dots|g_n]$.
 
\begin{lem}\label{G-H}
Let $H$ be a subgroup of $G$. Let $\theta:G/H \arr G$ be any (set theoretic)
section of the natural map (of sets) $\pi:G \arr G/H$, $g \mt gH$. For $g \in G$, let 
$\overline{g}:=\Big(\theta\circ\pi(g)\Big)^{-1}g$. Then the map 
$\mathcal{C}_n(G) \arr \mathcal{C}_n(H)$
given by $(g_0, \dots, g_n) \mt (\overline{g_0}, \dots, \overline{g_n})$ induces an $H$-morphism 
of the standard complexes $\mathcal{C}_\bullet(G) \arr \mathcal{C}_\bullet(H)$ and for any $n$, 
the homomorphism 
\[
H_n(H)= H_n(\mathcal{C}_\bullet(G)_H) \arr H_n(\mathcal{C}_\bullet(H)_H)= H_n(H)
\]
coincides with the
identity map $\id_{H_n(H)}$.
\end{lem}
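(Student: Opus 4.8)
The plan is to verify three things in turn: that the stated componentwise formula genuinely defines a map into $\mathcal{C}_\bullet(H)$, that this map is a morphism of complexes of right $\z[H]$-modules lifting $\id_\z$, and finally that the induced map on homology of the $H$-coinvariants is the identity. The one genuinely useful observation, which makes the whole thing work, is that $\overline{g}\in H$ for every $g\in G$. Indeed, since $\theta$ is a section of $\pi$, the element $\theta\circ\pi(g)$ lies in the coset $\pi(g)=gH$, so $\theta\circ\pi(g)=gh$ for some $h\in H$, whence $\overline{g}=(\theta\circ\pi(g))^{-1}g=h^{-1}\in H$. Thus $(g_0,\dots,g_n)\mapsto(\overline{g_0},\dots,\overline{g_n})$ really does land in $\mathcal{C}_n(H)$, and I would record this as the first step.

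Next I would check compatibility with the structure maps. Since the face operators of the standard complex delete a coordinate and the assignment $g\mapsto\overline{g}$ is applied separately in each coordinate, the two operations commute, so $\phi\colon\mathcal{C}_\bullet(G)\arr\mathcal{C}_\bullet(H)$ is a chain map; it is augmentation-preserving because $\epsilon(\overline{g_0})=1=\epsilon(g_0)$. For $H$-equivariance with respect to the right action $(g_0,\dots,g_n)\cdot h=(g_0h,\dots,g_nh)$, note that for $h\in H$ one has $\pi(gh)=ghH=gH=\pi(g)$, hence $\theta\circ\pi(gh)=\theta\circ\pi(g)$ and $\overline{gh}=(\theta\circ\pi(g))^{-1}gh=\overline{g}\,h$. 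Applying this in each coordinate gives $\phi(m\cdot h)=\phi(m)\cdot h$, so $\phi$ is a morphism of complexes of right $\z[H]$-modules covering $\id_\z$, which is exactly the first assertion of the lemma. This paragraph is routine bookkeeping; the only point needing care is using the correct right $H$-action, the same one under which the bar map descends.

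For the final claim I would invoke the comparison theorem rather than build an explicit homotopy. Restricting $\mathcal{C}_\bullet(G)$ along $H\hookrightarrow G$ yields a resolution of $\z$ by free $\z[H]$-modules, because each $\mathcal{C}_n(G)=\z[G^{n+1}]$ is $\z[G]$-free and $\z[G]$ is free over $\z[H]$; of course $\mathcal{C}_\bullet(H)$ is also a free $\z[H]$-resolution of $\z$. Applying $(-)_H=(-)\otimes_{\z[H]}\z$, both complexes compute $\mathrm{Tor}^{\z[H]}_n(\z,\z)=H_n(H)$, and under these canonical identifications the map on homology is precisely the map induced by $\phi$. Since $\phi$ is an $H$-chain map lifting $\id_\z$, the comparison theorem shows it is unique up to $H$-chain homotopy, so it induces the canonical identity identification; that is, the composite $H_n(H)=H_n(\mathcal{C}_\bullet(G)_H)\arr H_n(\mathcal{C}_\bullet(H)_H)=H_n(H)$ is $\id_{H_n(H)}$. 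The main subtlety I would be careful about here is that the two edge identifications with $H_n(H)$ are the canonical ones coming from the respective resolutions, so that the conclusion is the literal identity and not merely some automorphism; granting that, the homotopy is never written down and the argument is clean.
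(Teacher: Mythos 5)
Your proof is correct and follows essentially the same route as the paper: both rest on the fact that $\mathcal{C}_\bullet(G)$ restricted to $H$ and $\mathcal{C}_\bullet(H)$ are free $\z[H]$-resolutions of $\z$, so an $H$-chain map lifting $\id_\z$ induces the canonical identification on homology. The paper compresses this by observing that your map induces the inverse of the (identity) isomorphism coming from the inclusion $\mathcal{C}_\bullet(H)\hookrightarrow\mathcal{C}_\bullet(G)$, whereas you apply the comparison theorem to the map itself; the underlying principle is identical.
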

\begin{proof}
This is easy to prove. In fact this map
can be seen as the inverse of the identity homomorphism
\[
H_n(H)=H_n(\mathcal{C}_\bullet(H)_H) \arr H_n(\mathcal{C}_\bullet(G)_H)= H_n(H)
\]
induced by the inclusion of $H$ in $G$.
\end{proof}
 
For any $n$-tuple $(g_1, g_2, \dots, g_n)$ of 
pairwise commuting elements of G, let
\[
{\rm \bf{c}}({g}_1, {g}_2,\dots, {g}_n):=\sum_{\sigma \in \Sigma_n} {{\rm
sign}(\sigma)}[{g}_{\sigma(1)}| {g}_{\sigma(2)}|\dots|{g}_{\sigma(n)}] \in
H_n(G),
\]
where $\Sigma_n$ is the symmetric group of degree $n$. 
In fact, ${\rm \bf{c}}({g}_1, {g}_2,\dots, {g}_n)$ is
the image of $g_1\wedge \dots \wedge g_n$ under the composition
\[
\begin{array}{c}
\bigwedge_\z^n A \arr H_n(A) \arr H_n(G),
\end{array}
\]
where $A$ is the abelian subgroup of $G$ generated by $g_1,\dots, g_n$
and the first map is the Pontryagin product.

\begin{lem}\label{d3}
The differential map
\[
d_{3,0}^3 :\ppp(R) \arr H_2(\rr) \oplus (\rr \otimes \rr)_\sigma \oplus H_2(R)_\rr
\]
is given by 
\[
d_{3,0}^3([a])= \bigg(a \wedge (1-a),-a\otimes (1-a),-2{\bf c}(a, 1-a) \bigg).
\]
\end{lem}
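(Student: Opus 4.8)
The plan is to compute $d_{3,0}^3([a])$ by the standard three-step zig-zag in the double complex $\mathcal{B}_\bullet(\GL_2(R)) \otimes_{\GL_2(R)} C_\bullet(R^2)$ that defines the spectral sequence $E_{p,q}^1$; I will write $\partial^h$ for the differential induced by $C_\bullet(R^2)$ and $\partial^v$ for the bar differential. The class $[a] \in \ppp(R) = E_{3,0}^2$ is represented by the frame $p(a) = (\infty, \mathbf 0, \mathbf 1, a^{-1})$, with $a^{-1} = \lan e_1 + a e_2\ran$, sitting at the spot $(3,0)$. Since $d_{3,0}^1 = 0$ and $E_{1,1}^2 = 0$ (so that $d_{3,0}^2 = 0$ as well), the element $\partial^h p(a) = \partial_3 p(a)$ represents $0$ in $E_{2,0}^1 = H_0(\rr)$ and so may be lifted through $\partial^v$ to a $1$-chain $z_1$ at $(2,1)$; applying $\partial^h$ and lifting once more (legitimate because $E_{1,1}^2 = 0$) produces a chain $z_2$ at $(1,2)$, and then $\partial^h z_2$ is a $2$-cycle at $(0,2)$ whose class in $E_{0,2}^1 = H_2(B_2)$ is $d_{3,0}^3([a])$.

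To make the lifts explicit I would first record the four faces
\[
\partial_3 p(a) = (\mathbf 0, \mathbf 1, a^{-1}) - (\infty, \mathbf 1, a^{-1}) + (\infty, \mathbf 0, a^{-1}) - (\infty, \mathbf 0, \mathbf 1),
\]
and, using transitivity of the $\GL_2(R)$-action on the generators of $C_2(R^2)$, the elements carrying the base frame $(\infty, \mathbf 0, \mathbf 1)$ onto each of them: one checks that $(\infty,\mathbf 0,a^{-1})$, $(\infty,\mathbf 1,a^{-1})$ and $(\mathbf 0,\mathbf 1,a^{-1})$ are the images of $(\infty,\mathbf 0,\mathbf 1)$ under
\[
\diag(1,a), \qquad \mtx{1-a}{a}{0}{a}, \qquad \mtx{0}{1}{a-1}{1},
\]
respectively, while the fourth face is fixed. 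With these in hand $z_1$ is the sum of the corresponding bar $1$-chains, and the two further steps are carried out by the usual contracting homotopy of the bar resolution together with Lemma~\ref{G-H}, which keeps every chain inside the stabilizers $\rr = \stabe(\infty,\mathbf 0,\mathbf 1)$, $T_2 = \stabe(\infty,\mathbf 0)$ and $B_2 = \stabe(\infty)$ attached to the columns $p = 2, 1, 0$.

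Once the resulting $2$-cycle in $H_2(B_2)$ is obtained, I would read off the three asserted components through the splitting $H_2(B_2) \simeq H_2(T_2) \oplus H_2(R)_\rr \simeq H_2(\rr) \oplus (\rr \otimes \rr)_\sigma \oplus H_2(R)_\rr$ furnished by $B_2 = R \rtimes T_2$ and Lemma~\ref{T2-B2}. Pushing the cycle forward along $B_2 \two T_2$ kills the unipotent entries and, after passing to $\sigma$-coinvariants, leaves a class in $H_2(T_2)_\sigma$ built from the torus parts $\diag(1-a,a)$ and $\diag(1,a)$ of the carrying elements; its symmetric part yields $a \wedge (1-a) \in H_2(\rr)$ and its mixed part yields $-a\otimes(1-a) \in (\rr\otimes\rr)_\sigma$. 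The latter is exactly $-\lambda([a])$, a welcome consistency check, since $d_{3,0}^3$ must be compatible with the map $\ppp(R) \arr (\rr\otimes\rr)_\sigma$.

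The delicate point, and the step I expect to be the main obstacle, is the third component in $H_2(R)_\rr$. It is the purely unipotent part of the $2$-cycle, invisible after projecting to $T_2$, and it is isolated by the retraction $H_2(B_2) \two H_2(R)_\rr$ coming from the semidirect product decomposition. Extracting it forces one to propagate the strictly upper-triangular entries of the carrying elements through both vertical lifts and to recognize the outcome, via the Pontryagin product $\bigwedge_\z^2 R \arr H_2(R)$, as a multiple of $\mathbf c(a,1-a)$, where $a$ and $1-a$ are now read in the additive group $R$. The multiplicity $2$ should appear as a symmetrization: the two faces carried by non-diagonal matrices each contribute a copy of $\mathbf c(a,1-a)$, and after taking $\rr$-coinvariants and fixing the signs of the zig-zag they combine to $-2\,\mathbf c(a,1-a)$. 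Pinning down this coefficient and its sign is the real content of the computation, whereas the torus components follow formally once the three matrices above are written down.
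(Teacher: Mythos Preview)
Your approach is exactly the one the paper takes: a three-step zig-zag in the double complex, starting from $p(a)$ and using matrices that carry $(\infty,\mathbf 0,\mathbf 1)$ to the other three faces of $\partial_3 p(a)$. Your carrying matrices coincide with the paper's $g_1,g_2,g_3$, and your plan to separate the result via the splitting $H_2(B_2)\simeq H_2(T_2)_\sigma\oplus H_2(R)_{\rr}$ of Lemma~\ref{T2-B2} is also what the paper does.

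The gap is that you have written an outline, not a proof, and for this lemma the computation \emph{is} the proof. Two of your shortcuts do not survive contact with the actual calculation. First, the torus components do not ``follow formally once the three matrices are written down'': the matrix $g_1=\mtx{0}{1}{a-1}{1}$ is not in $B_2$, so before any projection $B_2\twoheadrightarrow T_2$ can be applied you must first transport the cycle $(u_aw-u_a)\otimes(\infty)$ into $\mathcal{B}_\bullet(B_2)_{B_2}$ via the section of Lemma~\ref{G-H}. In the paper this produces a sixteen-term element $X_a$, whose torus image $Y_a$ still has sixteen terms and only collapses to ${\bf c}((a,1),(1-a,1))-{\bf c}((a,1),(1,1-a))$ after a further identity in $H_2(T_2)_\sigma$. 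Second, your heuristic for the coefficient $-2$ (``the two non-diagonal faces each contribute a copy of ${\bf c}(a,1-a)$'') is not how the unipotent piece actually arises: in the paper the $H_2(R)_{\rr}$-component $Z_a$ has fourteen terms, most of which involve $a^{-2}$, $a+1$, $(a-1)^{-1}$, and only after grouping into seven ${\bf c}$-symbols and taking $\rr$-coinvariants does one see $-2\,{\bf c}(a,1-a)$. Finally, the decomposition $X_a=Y_a-Z_a+\delta_3(W_a)$ requires exhibiting an explicit boundary $W_a$; the paper's $W_a$ has roughly sixty bar $3$-chains, and there is no soft argument replacing it.

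In short: correct strategy, but the lemma is a bare-hands verification and none of the three components can be read off without carrying the zig-zag through explicitly.
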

\begin{proof}
This is a long and tedious calculation.  
Here we argue as in \cite{mirzaii2012}. 
For simplicity let 
$F_i:=\mathcal{C}_i(\GL_2(R))$, $C_i:=C_i(R^2)$, $\GL_2=\GL_2(R)$ and 
consider the following commutative diagram
\[
\begin{array}{cccccccc}
F_2 \otimes_{\GL_2} C_3 \!\! &\!\! \larr \!\! &\!\! F_2 
\otimes_{\GL_2} C_2 \!\!&\!\! \larr\!\!
& \!\! F_2 \otimes_{\GL_2} C_1 \!\! &  \!\!
{\larr} \!\! &\!\! \!\!F_2 \otimes_{\GL_2} C_0 \\
\Bigg\downarrow\vcenter{%
\rlap{$\scriptstyle{}$}}&  &\Bigg\downarrow\vcenter{%
\rlap{$\scriptstyle{}$}}
&       & \Bigg\downarrow\vcenter{%
\rlap{}}
&  &\Bigg\downarrow{%
\rlap{$\scriptstyle{}$}} \\
F_1 \otimes_{\GL_2} C_3 \!\! &\!\! \larr \!\! &\!\! F_1 
\otimes_{\GL_2} C_2 \!\!&\!\! \larr\!\!
& \!\! F_1 \otimes_{\GL_2} C_1 \!\! &  \!\!
{\larr} \!\! & \!\!\!\!F_1\otimes_{\GL_2} C_0 \\
\Bigg\downarrow\vcenter{%
\rlap{$\scriptstyle{}$}}&  &\Bigg\downarrow\vcenter{%
\rlap{$\scriptstyle{}$}}
&       & \Bigg\downarrow\vcenter{%
\rlap{}}
&  &\Bigg\downarrow{%
\rlap{$\scriptstyle{}$}} \\
F_0 \otimes_{\GL_2} C_3 \!\! &\!\!\larr \!\! &\!\! F_0 \otimes_{\GL_2} C_2
 \!\!&\!\! {\larr}\!\!
& \!\! F_0 \otimes_{\GL_2} C_1 \!\! & \!\! \larr \!\!
& \!\!\!\! F_0 \otimes_{\GL_2}\! C_0.
\end{array}
\]
The element $[a] \in \ppp(R)$ comes from
$x_a:=(1) \otimes ({\bf \infty, 0, 1, a^{-1}}) \in F_0 \otimes_{\GL_2} C_3$,
and $(1) \otimes \partial_3({\bf  \infty, 0, 1, a^{-1}})\!= \!
\bigg[(g_1)-(g_2)+(g_3)-(1)\bigg] \otimes ({\bf\infty, 0, 1})
\!\in\! F_0\otimes_{\GL_2} C_2$,
where
\[
g_1=\mtx 0 1 {a-1} 1, \ \ g_2=\mtx  {1-a} a 0 a ,\ \ g_3= \mtx 1 0 0 a.
\]
If
$y_a:=  \bigg[(g_2, g_1)-(g_3, 1)\bigg] \otimes ({\bf\infty, 0, 1})\in F_1\otimes_{\GL_2} C_2$,
then $\delta_1\otimes \id_{C_2}(y_a)=x_a$. Now
$z_a:=(\id_{F_1}\otimes \partial_2)(y_a)=
\bigg[(g_2, g_1)-(g_3, 1)\bigg] \otimes \partial_2({\bf\infty, 0, 1})
\in F_1 \otimes_{\GL_2} C_1$,
is equal to
\[
 \bigg[(g_2 g_1,g_1^2)-(g_3 g_1,g_1)-(g_2^2, g_1 g_2)
+(g_3 g_2, g_2)+(g_2, g_1)-(g_3, 1)\bigg] \otimes ({\bf\infty, 0}).
\]
Since $a^{-1}g_2 g_1=g_1^2 g_3^{-1}$,
$(a^{-1} g_2^2 g_3^2, g_1 g_2)=  (g_3g_2,g_3g_1){\mtx {a^{-1}(1-a)} 0 0 a }$,
by a direct calculation one can see that $ \delta_2 (u_a)\otimes ({\bf\infty, 0})=z_a$,
where
\begin{align*}
u_a =& +(g_3 g_1, g_2, g_1)-(g_3 g_2, g_3 g_1, g_2)
-(a^{-1} g_2^2 g_3^2, g_2^2, g_1 g_2)\\
& +(a^{-3} g_2^2 g_3^2, a^{-2}g_2^2, 1)
-(a^{-3} g_2^2 g_3^2, a^{-1}g_3^2, 1)
+ (a^{-1}g_2 g_1, a^{-1}g_1^2, 1)\\
& -(g_1^2g_3^{-1}, ag_3^{-1}, 1)
+ (g_3, a^{-1}g_3^2, 1).
\end{align*}
Here by $ag$, $a \in \rr$ and $g \in \GL_2$, we mean $\mtx a {0} {0} a g$. Note that 
$d_{3,0}^2([a]) \in E_{0,2}^2 = H_2(B_2)/\im(d_{1,2}^1)$ is
represented by
\[
u_a \otimes \partial_1({\bf\infty, 0}) =(u_aw-u_a)\otimes (\infty) \in
F_2 \otimes_{\GL_2} C_0,
\]
where $w=\mtx 0 1 1 0$. On the chain level the isomorphism
\begin{equation}\label{h2b}
H_2(\GL_2(R), C_0(R^2))\overset{\simeq}{\larr} H_2(B_2)
\end{equation}
is given by
$F_\bullet \otimes_{\GL_2} C_0 \arr F_\bullet \otimes_{B_2} \z$, 
$y \otimes (\infty) \mt y \otimes 1$.
Let $\mathcal{C}_\bullet(B_2)\arr \z$ be the standard resolution of $\z$ over $B_2$.
By Lemma \ref{G-H}, an augmented preserving chain map of $B_2$-resolutions
\[
F_\bullet=\mathcal{C}_\bullet(\GL_2(R)) \arr \mathcal{C}_\bullet(B_2)
\]
is obtained as follows: The map $s:\GL_2/B_2 \arr \GL_2$ given by
\[
s(gB):=\begin{cases}
1 & \text{if $g(\infty)=\infty$}\\
w & \text{if $g(\infty)={\bf 0}$ }\\
{\mtx 1 0 {b} 1} & \text{if $g(\infty)={\bf b^{-1}}$,}
\end{cases}
\]
is a (set-theoretic) section of the canonical projection $\pi: \GL_2 \arr \GL_2/B_2$. 
Now if $\overline{g}:=(s\circ\pi(g))^{-1}g$, then we have
\[
\overline{g}=\begin{cases}
g & \text{if $g(\infty)=\infty$}\\
wg & \text{if $g(\infty)={\bf 0}$ }\\
{\mtx 1 0 {-b} 1}g & \text{if $g(\infty)={\bf b^{-1}}$.}
\end{cases}
\]
Thus on the chain level the map
\[
F_n \otimes_{\GL_2} C_0 \arr \mathcal{C}_n(B_2) \otimes_{B_2} \z, \ \
(g_0, \dots, g_n) \otimes (\infty)  \mt
(\overline{g_0}, \dots, \overline{g_n}) \otimes 1,
\]
induces the homomorphism (\ref{h2b}). 

Hence by a direct computation we see that under the map (\ref{h2b}) the element 
$d_{3,0}^2([a]) \in H_2(B_2)=H_2(\mathcal{B}_\bullet(B_2)_{B_2})$
is represented by the element $X_a \in \mathcal{B}_2(B_2)_{B_2}$, where
\begin{align*}
X_a\!\!=&+\bigg[{\mtxx {a^{-1}} {a^{-1}} {0} {a}}| {\mtxx {a} {-1} {0} {1}} \bigg]
 -\bigg[{\mtxx {-a} {a+1} {0} {a^{-1}}}| {\mtxx {-1} {a+1} {0} {a}}   \bigg]\\
&\!\!\!\!\!\!\!\!\!-\!\!
\bigg[{\mtxx {a} {-a^{-1}} {0} {1}}| {\mtxx {a^{-1}} {a^{-1}} {0} {a}} \bigg]
+\bigg[{\mtxx {-a^{-1}} {a+1} {0} {a^2}}| {\mtxx {-a} {a+1} {0} {a^{-1}}} \bigg]\\
&\!\!\!\!\!\!\!\!\!-\!\!
\bigg[{\mtxx {a} {1-a^{-2}} {0} {a^{-1}}}| {\mtxx {1} {-1} {0} {a}}\bigg]
+\bigg[{\mtxx {a^{-1}} {1-a^{-2}} {0} {a}}| {\mtxx {-1} {a+1} {0} {a}}\bigg]\\
%%%%%%
&\!\!\!\!\!\!\!\!\!+\!\!
\bigg[\!{\mtxx {a}{1-a^{-2}}{0}{a^{-1}}}\!|
\!{\mtxx {a^{-1}}{a^{-2}(a-1)^{2}}{0}{-a^{-1}(a-1)^{2}}}\!\bigg]
\!\!-\!\!\bigg[\!{\mtxx {a^{-1}} {1-a^{-2}} {0} {a}}\!|
\!{\mtxx {a^{-2}(a-1)^2} {a^{-1}} {0} {1}}\!\bigg]\\
&\!\!\!\!\!\!\!\!\!-\!\!
\bigg[{\mtxx {a^{-1}} {a^{-2}(a-1)^2} {0} {-a^{-1}(a-1)^2}}| {\mtxx{a} {0} {0}{a^{-1}}}\bigg]
+\bigg[{\mtxx {a^{-2}(a-1)^2} {a^{-1}} {0} {1}}| {\mtxx {a^{-1}} {0} {0} {a}}\bigg]\\
&\!\!\!\!\!\!\!\!\!+\!\!
\bigg[{\mtxx{1}{-1}{0} {a}}|{\mtxx {a^{-1}} {a^{-1}(a-1)} {0} {-a^{-1}(a-1)^2}}\bigg]
\!-\! \bigg[{\mtxx {a} {-1} {0} {1}}| {\mtxx {a^{-1}(a-1)} {a^{-1}} {0} {a^{-1}(a-1)}}\bigg]\\
&\!\!\!\!\!\!\!\!\!-\!\!
\bigg[{\mtxx {a^{-1}}{a^{-1}(a-1)} {0} {-a^{-1}(a-1)^2}}|{\mtxx {1} {0} {0} {a}}\bigg]
+\bigg[{\mtxx {a^{-1}(a-1)} {a^{-1}} {0} {a^{-1}(a-1)}}| {\mtxx {a} {0} {0} {1}}\bigg]\\
&\!\!\!\!\!\!\!\!\!+\!\!
\bigg[{\mtxx {1} {0} {0} {a}}| {\mtxx {a} {0} {0} {a^{-1}}}\bigg]
-\bigg[{\mtxx {a} {0} {0} {1}}| {\mtxx {a^{-1}} {0} {0} {a}}\bigg].
\end{align*}
Let $Y_a\in H_2(T_2)_\sigma$ and $Z_a\in H_2(N_2)_\rr$ be
\begin{align*}
Y_a= 
& +[(a^{-1}, a)|(a, 1)]- [(-a,a^{-1})|(-1, a)] -[(a, 1)|(a^{-1}, a)]\\
& +[(-a^{-1}, a^2)|(-a, a^{-1})]\!-\![(a, a^{-1})|(1, a)]\!+\![(a^{-1}, a)|(-1, a)] \\
& +[(a, a^{-1})|(a^{-1},-a^{-1}(a-1)^2)]-[(a^{-1},a)|(a^{-2}(a-1)^2, 1)]\\
& -[(a^{-1}, -a^{-1}(a-1)^2)|(a, a^{-1})]+[(a^{-2}(a-1)^2, 1)|(a^{-1}, a)]\\
& +[(1,a)|(a^{-1}, -a^{-1}(a-1)^2))]\!-\![(a,1)|(a^{-1}(a-1),\!a^{-1}(a-1))]\\
& -[(a^{-1}, -a^{-1}(a-1)^2)|(1,a)]\!+\![(a^{-1}(a-1),\! a^{-1}(a-1))|(a, 1)]\\
& +[(1, a)|(a,a^{-1})]-[(a,1)|(a^{-1},a)],
\end{align*}
\begin{align*}
Z_a=&+\bigg[{\mtxx {1} {-1} {0} {1}}| {\mtxx {1} {1} {0} {1}} \bigg] -
\bigg[{\mtxx {1} {1} {0} {1}}| {\mtxx {1} {-1} {0} {1}} \bigg]\\
&+ \bigg[{\mtxx {1} {-a^{-2}} {0} {1}}| {\mtxx {1} {a^{-2}} {0} {1}} \bigg] -
\bigg[{\mtxx {1} {a^{-2}} {0} {1}}| {\mtxx {1} {-a^{-2}} {0} {1}} \bigg]\\
& + \bigg[{\mtxx {1} {a(a+1)} {0} {1}}| {\mtxx {1} {-a(a+1)} {0} {1}} \bigg] -
\bigg[{\mtxx {1} {-a(a+1)} {0} {1}}| {\mtxx {1} {a(a+1)} {0} {1}} \bigg]\\
& +\bigg[{\mtxx {1} {-1} {0} {1}}| {\mtxx {1} {a(a-1)^{-1}} {0} {1}} \bigg] -
\bigg[{\mtxx {1} {a(a-1)^{-1}} {0} {1}}| {\mtxx {1} {-1} {0} {1}} \bigg]\\
& +\bigg[{\mtxx {1} {a+1} {0} {1}}| {\mtxx {1} {-1} {0} {1}} \bigg] -
\bigg[{\mtxx {1} {-1} {0} {1}}| {\mtxx {1} {a+1} {0} {1}} \bigg]\\
& +\bigg[{\mtxx {a} {0} {0} {a}}| {\mtxx {1} {1} {0} {1}} \bigg] -
\bigg[{\mtxx {1} {1} {0} {1}}| {\mtxx {a} {0} {0} {a}} \bigg]\\
& +\bigg[{\mtxx {1} {-a(a-1)^{-1}} {0} {1}}| {\mtxx {a} {0} {0} {a}} \bigg] -
\bigg[{\mtxx {a} {0} {0} {a}}| {\mtxx {1} {-a(a-1)^{-1}} {0} {1}} \bigg].
\end{align*}
By a direct computation, one sees that in $\mathcal{B}_2(B_2)_{B_2}$ we have
\[
X_a=Y_a-Z_a +\delta_3(W_a),
\]
where $W_a$ is the following element of 
$\mathcal{B}_3(B_2)_{B_2}\simeq \z\otimes_{B_2}\mathcal{B}_3(B_2)$:
\begin{align*}
W_a\!=& \!-\!\!\bigg[{\mtxx {a^{-1}} {0} {0} {a}}| 
{\mtxx {1} {1} {0} {1}}| {\mtxx {a} {-1} {0} {1}} \bigg]
\!\!-\!\!\bigg[{\mtxx {1} {-a^{-1}(a\!+\!1)} {0} {1}}| 
{\mtxx {1} {a^{-1}(a\!+\!1)} {0} {1}}| 
{\mtxx {-1} {0} {0} {a}} \bigg]\\
& +\bigg[{\mtxx {-a} {0} {0} {a^{-1}}}| 
{\mtxx {1} {-a^{-1}(a+1)} {0} {1}}| 
{\mtxx {-1} {a+1} {0} {a}} \bigg]
+\bigg[{\mtxx {1} {1} {0} {1}}| 
{\mtxx {1} {-1} {0} {1}}| 
{\mtxx {a} {0} {0} {1}} \bigg]\\
& +\bigg[{\mtxx {a} {0} {0} {1}}| 
{\mtxx {1} {-a^{-2}} {0} {1}}| 
{\mtxx {a^{-1}} {a^{-1}} {0} {a}} \bigg]
-\bigg[{\mtxx {1} {-a^{-2}} {0} {1}}| 
{\mtxx {1} {a^{-2}} {0} {1}}| 
{\mtxx {a^{-1}} {0} {0} {a}} \bigg]\\
& -\bigg[{\mtxx {-a^{-1}} {0} {0} {a^{2}}}| 
{\mtxx {1} {-a(a+1)} {0} {1}}| 
{\mtxx {-a} {a+1} {0} {a^{-1}}} \bigg]
-\bigg[{\mtxx {1} {1} {0} {1}}| 
{\mtxx {a} {0} {0} {1}}| 
{\mtxx {1} {0} {0} {a}} \bigg]\\
&+\bigg[{\mtxx {1} {-a(a+1)} {0} {1}}| 
{\mtxx {1} {a(a+1)} {0} {1}}| 
{\mtxx {-a} {0} {0} {a^{-1}}} \bigg]
+ \bigg[{\mtxx {a} {0} {0} {1}}| 
{\mtxx {1} {a^{-1}} {0} {1}}| 
{\mtxx {1} {0} {0} {a}} \bigg]\\
& +\bigg[{\mtxx {a} {0} {0} {a^{-1}}}| 
{\mtxx {1} {a^{-1}(1-a^{-2})} {0} {1}}| 
{\mtxx {1} {-1} {0} {a}} \bigg]
-\bigg[{\mtxx {a^{-1}} {0} {0} {a}}| 
{\mtxx {a} {0} {0} {1}}| 
{\mtxx {1} {1} {0} {1}} \bigg]\\
&-\bigg[{\mtxx {1} {a^{-1}(1-a^{-2})} {0} {1}}| 
{\mtxx {1} {-a^{-1}} {0} {1}}| 
{\mtxx {1} {0} {0} {a}} \bigg]
-\bigg[{\mtxx {1} {a^{-1}} {0} {1}}| 
{\mtxx {a^{-1}} {0} {0} {a}}| 
{\mtxx {a} {0} {0} {1}} \bigg]\\
& -\bigg[{\mtxx {a} {0} {0} {a^{-1}}}| 
{\mtxx {1} {0} {0} {a}}| 
{\mtxx {1} {-a^{-2}} {0} {1}} \bigg]
-\bigg[{\mtxx {a^{-1}} {0} {0} {a}}| 
{\mtxx {1} {a(1-a^{-2})} {0} {1}}| 
{\mtxx {-1} {a+1} {0} {a}} \bigg]\\
\end{align*}
\begin{align*}
& +\bigg[{\mtxx {1} {a(1-a^{-2})} {0} {1}}| 
{\mtxx {1} {a^{-1}(a+1)} {0} {1}}| 
{\mtxx {-1} {0} {0} {a}} \bigg]
+\bigg[{\mtxx {a^{-1}} {0} {0} {a}}| 
{\mtxx {1} {a} {0} {1}}| 
{\mtxx {a} {0} {0} {1}} \bigg]\\
& +\bigg[{\mtxx {a} {1-a^{-2}} {0} {a^{-1}}}| 
{\mtxx {1} {-a^{-1}} {0} {1}}| 
{\mtxx {a^{-1}} {0} {0} {-a^{-1}(a-1)^2}}\bigg]
+\bigg[{\mtxx {1} {a} {0} {1}}| 
{\mtxx {a} {0} {0} {1}}| 
{\mtxx {1} {-1} {0} {1}} \bigg]\\
&-\bigg[{\mtxx {1} {-a^{-1}} {0} {1}}| 
{\mtxx {a} {0} {0} {a^{-1}}}| 
{\mtxx {a^{-1}} {0} {0} {-a^{-1}(a-1)^2}} \bigg]
-\bigg[{\mtxx {1} {-1} {0} {1}}| 
{\mtxx {1} {1} {0} {1}}| 
{\mtxx {a} {0} {0} {1}} \bigg]\\
& -\bigg[{\mtxx {a} {0} {0} {a^{-1}}}| 
{\mtxx {1} {a^{-1}(1-a^{-2})} {0} {1}}| 
{\mtxx {1} {-a^{-1}} {0} {1}} \bigg]
+\bigg[{\mtxx {1} {a^{-2}} {0} {1}}| 
{\mtxx {1} {-a^{-2}} {0} {1}}| 
{\mtxx {a^{-1}} {0} {0} {a}} \bigg]\\
&-\bigg[{\mtxx {a^{-1}} {1-a^{-2}} {0} {a}}| 
{\mtxx {1} {a^{-1}} {0} {1}}| 
{\mtxx {a^{-2}(a-1)^2} {0} {0} {1}} \bigg]
- \bigg[{\mtxx {1} {a^{-2}} {0} {1}}| 
{\mtxx {a^{-1}} {0} {0} {a}}| 
{\mtxx {1} {-1} {0} {1}} \bigg]\\
& +\bigg[{\mtxx {1} {a^{-1}} {0} {1}}| 
{\mtxx {a^{-1}} {0} {0} {a}}| 
{\mtxx {a^{-2}(a-1)^2} {0} {0} {1}} \bigg]
+\bigg[{\mtxx {a} {0} {0} {1}}| 
{\mtxx {1} {a^{-1}(a+1)} {0} {1}}| 
{\mtxx {1} {-1} {0} {1}} \bigg]\\
&+\bigg[{\mtxx {a^{-1}} {0} {0} {a}}| 
{\mtxx {1} {a(1-a^{-2})} {0} {1}}| 
{\mtxx {1} {a^{-1}} {0} {1}} \bigg]
+\bigg[{\mtxx {1} {-a^{-1}} {0} {1}}| 
{\mtxx {a} {0} {0} {1}}| 
{\mtxx {a^{-1}} {0} {0} {a}} \bigg]\\
&+\bigg[{\mtxx {1} {-a^{-1}} {0} {1}}| 
{\mtxx {a^{-1}} {0} {0} {-a^{-1}(a-1)^2}}| 
{\mtxx {a} {0} {0} {a^{-1}}} \bigg]
-\bigg[{\mtxx {1} {a+1} {0} {1}}| 
{\mtxx {a} {0} {0} {1}}| 
{\mtxx {1} {-1} {0} {1}} \bigg]\\
&-\bigg[{\mtxx {1} {a^{-1}} {0} {1}}| 
{\mtxx {a^{-2}(a-1)^2} {0} {0} {1}}| 
{\mtxx {a^{-1}} {0} {0} {a}} \bigg]
+\bigg[{\mtxx {a} {0} {0} {1}}| 
{\mtxx {a^{-1}} {0} {0} {a}}| 
{\mtxx {1} {-1} {0} {1}} \bigg]\\
& +\bigg[{\mtxx {1} {-1} {0} {a}}| 
{\mtxx {1} {-(a-1)^{-1}} {0} {1}}| 
{\mtxx {a^{-1}} {0} {0} {-a^{-1}(a-1)^2}} \bigg]
-\bigg[{\mtxx {a} {0} {0} {1}}| 
{\mtxx {1} {-a^{-2}} {0} {1}}| 
{\mtxx {a^{-1}} {0} {0} {a}} \bigg]\\
&-\bigg[{\mtxx {1} {-(a-1)^{-1}} {0} {1}}| 
{\mtxx {1} {0} {0} {a}}| 
{\mtxx {a^{-1}} {0} {0} {-a^{-1}(a-1)^2}} \bigg]
+\bigg[{\mtxx {1} {-a^{-1}} {0} {1}}| 
{\mtxx {a} {0} {0} {a^{-1}}}| 
{\mtxx {1} {0} {0} {a}} \bigg]\\
&-\bigg[{\mtxx {a} {-1} {0} {1}}| 
{\mtxx {1} {(a-1)^{-1}} {0} {1}}| 
{\mtxx {a^{-1}(a-1)} {0} {0} {a^{-1}(a-1)}} \bigg]
-\bigg[{\mtxx {a} {0} {0} {a^{-1}}}| 
{\mtxx {1} {-a^{-3}} {0} {1}}| 
{\mtxx {1} {0} {0} {a}} \bigg]\\
& +\bigg[{\mtxx {1} {(a-1)^{-1}} {0} {1}}| 
{\mtxx {a} {0} {0} {1}}| 
{\mtxx {a^{-1}(a-1)} {0} {0} {a^{-1}(a-1)}} \bigg]
-\bigg[{\mtxx {a} {0} {0} {1}}| 
{\mtxx {1} {1} {0} {1}}| 
{\mtxx {1} {-1} {0} {1}} \bigg]\\
&+\bigg[{\mtxx {1} {-(a-1)^{-1}} {0} {1}}| 
{\mtxx {a^{-1}} {0} {0} {-a^{-1}(a-1)^2}}| 
{\mtxx {1} {0} {0} {a}} \bigg]
+\bigg[{\mtxx {a^{-1}} {0} {0} {a}}| 
{\mtxx {-1} {0} {0} {a}}| 
{\mtxx {1} {-a(a+1)} {0} {1}} \bigg]\\
&- \bigg[{\mtxx {1} {(a-1)^{-1}} {0} {1}}| 
{\mtxx {a^{-1}(a-1)} {0} {0} {a^{-1}(a-1)}}| 
{\mtxx {a} {0} {0} {1}} \bigg]
+ \bigg[{\mtxx {a^{-1}} {0} {0} {a}}| 
{\mtxx {1} {1} {0} {1}}| 
{\mtxx {1} {-1} {0} {1}} \bigg]\\
& -\bigg[{\mtxx {-a} {0} {0} {a^{-1}}}| 
{\mtxx {1} {-a^{-1}(a+1)} {0} {1}}| 
{\mtxx {1} {a^{-1}(a+1)} {0} {1}} \bigg]
+\bigg[{\mtxx {1} {-1} {0} {1}}| 
{\mtxx {1} {a+1} {0} {1}}| 
{\mtxx {a} {-a} {0} {1}} \bigg]\\
&+\bigg[{\mtxx {1} {a(a+1)} {0} {1}}| 
{\mtxx {-a} {0} {0} {a^{-1}}}| 
{\mtxx {1} {a^{-1}(a+1)} {0} {1}} \bigg]
-\bigg[{\mtxx {1} {0} {0} {a}}| 
{\mtxx {1} {-1} {0} {1}}| 
{\mtxx {1} {-(a-1)^{-1}} {0} {1}} \bigg]\\
& -\bigg[{\mtxx {1} {a(a+1)} {0} {1}}| 
{\mtxx {1} {-a(a+1)} {0} {1}}| 
{\mtxx {-a} {0} {0} {a^{-1}}} \bigg]
+ \bigg[{\mtxx {1} {-1} {0} {1}}| 
{\mtxx {a} {0} {0} {1}}| 
{\mtxx {1} {(a-1)^{-1}} {0} {1}} \bigg]\\
& -\bigg[{\mtxx {1} {-1} {0} {1}}| 
{\mtxx {1} {a(a-1)^{-1}} {0} {1}}| 
{\mtxx {a} {0} {0} {1}} \bigg]
-\bigg[{\mtxx {1} {a(1-a^{-2})} {0} {1}}| 
{\mtxx {1} {a^{-1}(a+1)} {0} 1{}}| 
{\mtxx {1} {-1} {0} {1}} \bigg]\\
& +\bigg[{\mtxx {a} {0} {0} {a^{-1}}}| 
{\mtxx {1} {0} {0} {a}}| 
{\mtxx {1} {-a^{-2}} {0} {1}} \bigg]
+\bigg[{\mtxx {-1} {0} {0} {a}}| 
{\mtxx {1} {-a(a+1)} {0} {1}}| 
{\mtxx {-a} {0} {0} {a^{-1}}} \bigg]\\
\end{align*}
\begin{align*}
& -\bigg[{\mtxx {1} {a+1} {0} {1}}| 
{\mtxx {-1} {0} {0} {a}}| 
{\mtxx {-a} {0} {0} {a^{-1}}} \bigg]
-\bigg[{\mtxx {-1} {0} {0} {a}}| 
{\mtxx {-a} {0} {0} {a^{-1}}}| 
{\mtxx {1} {a^{-1}(a+1)} {0} {1}} \bigg]\\
& -\bigg[{\mtxx {a} {0} {0} {1}}| 
{\mtxx {1} {0} {0} {a}}| 
{\mtxx {1} {1} {0} {1}} \bigg]
+\bigg[{\mtxx {1} {a(a-1)^{-1}} {0} {1}}| 
{\mtxx {1} {-1} {0} {1}}| 
{\mtxx {1} {-(a-1)^{-1}} {0} {1}} \bigg]\\
& +\bigg[{\mtxx {a} {0} {0} {1}}| 
{\mtxx {1} {(a-1)^{-1}} {0} {1}}| 
{\mtxx {1} {-(a-1)^{-1}} {0} {1}} \bigg]
\!-\!\bigg[{\mtxx {1} {a(a-1)^{-1}} {0} {1}}| 
{\mtxx {a} {0} {0} {1}}| 
{\mtxx {1} {-(a-1)^{-1}} {0} {1}} \bigg]\\
& +\bigg[{\mtxx {1} {a(a-1)^{-1}} {0} {1}}| 
{\mtxx {1} {-a(a-1)^{-1}} {0} {1}}| 
{\mtxx {a} {0} {0} {1}} \bigg]
-\bigg[{\mtxx {a} {0} {0} {1}}| 
{\mtxx {1} {-(a-1)^{-1}} {0} {1}}| 
{\mtxx {1} {0} {0} {a}} \bigg]\\
& +\bigg[{\mtxx {1} {-a(a-1)^{-1}} {0} {1}}| 
{\mtxx {a} {0} {0} {1}}| 
{\mtxx {1} {0} {0} {a}} \bigg]
+\bigg[{\mtxx {a} {0} {0} {1}}| 
{\mtxx {1} {0} {0} {a}}| 
{\mtxx {1} {-a(a-1)^{-1}} {0} {1}} \bigg].
\end{align*}
Using the fact that
\begin{align*}
\delta_3([(a^{-1},a)|({-1},a)|(-a,a^{-1})])=& 
\!+\![(-1, a)|(-a, a^{-1})]\!+\![(a^{-1}, a)|(a, 1)]\\
&\!-\![(-a^{-1}\!, a^2)|(-a, a^{-1})]\!-\![(a^{-1}\!, a)|(-1, a)],
\end{align*}
we see that
\begin{gather*}
\begin{array}{rl}
Y_a=
& \!\!\! +{\rm \bf{c}}((-1, a),(-a, a^{-1})) + 2 {\rm \bf{c}}((a^{-1}, a),(a, 1))
+{\rm \bf{c}}((1, a),(a, a^{-1})) \\
&  \!\!\! +{\rm \bf{c}}((a^{-2}(a-1)^2, 1),(a^{-1}, a))
+{\rm \bf{c}}((a, a^{-1}),(a^{-1}, -a^{-1}(a-1)^2)) \\
&  \!\!\! +{\rm \bf{c}}((a^{-1}(a-1), a^{-1}(a-1)),(a, 1))
+{\rm \bf{c}}((1, a),(a^{-1}, -a^{-1}(a-1)^2))\\
= & \!\!\!+{\rm \bf{c}}((a, 1),(1-a, 1)) - {\rm \bf{c}}((a, 1),(1, 1-a))\in H_2(T_2)_\sigma.
\end{array}
\end{gather*}
Also we have
\begin{align*}
Z_a = & +{\rm \bf{c}}\bigg({\mtxx {1} {-1} {0} {1}}, {\mtxx {1} {1} {0} {1}}\bigg)+
{\rm \bf{c}}\bigg({\mtxx {1} {-a^{-2}} {0} {1}}, {\mtxx {1} {a^{-2}} {0} {1}}\bigg)\\
&+{\rm \bf{c}}\bigg({\mtxx {1} {a(a+1)} {0} {1}}, {\mtxx {1} {-a(a+1)} {0} {1}}\bigg)
+{\rm \bf{c}}\bigg({\mtxx {1} {-1} {0} {1}}, {\mtxx {1} {a(a-1)^{-1}} {0} {1}}\bigg)\\
&+{\rm \bf{c}}\bigg({\mtxx {1} {a+1} {0} {1}}, {\mtxx {1} {-1} {0} {1}}\bigg)
+{\rm \bf{c}}\bigg({\mtxx {a} {0} {0} {a}}, {\mtxx {1} {1} {0} {1}}\bigg)\\
&+{\rm \bf{c}}\bigg({\mtxx {1} {-a(a-1)^{-1}} {0} {1}}, {\mtxx {a} {0} {0} {a}}\bigg).
\end{align*}
Now it is easy to see that $d_{3,0}^3([a])=Y_a+Z_a$ corresponds to the element
$(a\wedge (1-a), -a \otimes (1-a), -2{\rm \bf{c}}(a,1-a)) \in H_2(\rr) \oplus 
(\rr \otimes \rr)_\sigma \oplus H_2(R)_\rr$. 
\end{proof}

\begin{cor}\label{vdk--0}
{\rm (i)} Let $R/\mmm_R$ has at least three elements. Then 
\[
H_2(\GL_2(R)) \simeq H_2(\GL_1(R))\oplus \Big((\rr\otimes\rr)_\sigma\oplus H_2(R)_\rr\Big)/K,
%\lan (a \otimes (1-a), 2{\rm \bf{c}}(a,1-a))):a,1-a\in \rr\ran.
\]
where $K=\lan \Big(a \otimes (1-a), 2{\rm \bf{c}}(a,1-a)\Big):a,1-a\in \rr\ran$.

\par {\rm (ii)} If $R/\mmm_R$ has at least four elements, then
\[
H_2(\SL_2(R))_\rr \simeq \Big((\rr\otimes\rr)_\sigma\oplus H_2(R)_\rr\Big)/K.
\]
\end{cor}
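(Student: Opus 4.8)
The plan is to extract $H_2(\GL_2(R))$ from the spectral sequence $E^1_{p,q}$ and then to peel off the $\GL_1$-part. As $|R/\mmm_R|\ge 3$, Lemmas~\ref{hutchinson1} and \ref{homology-complex} give $H_2(\GL_2(R),C_\bullet(R^2))\simeq H_2(\GL_2(R))$, so I would identify the abutment in total degree $2$. On the displayed $E^2$-page the antidiagonal $p+q=2$ has $E^2_{1,1}=E^2_{2,0}=0$, leaving only $E^2_{0,2}=H_2(T_2)_\sigma\oplus H_2(R)_\rr$; moreover $E^2_{2,1}=0$, so the sole differential meeting this spot is $d^3_{3,0}$, with $E^3_{0,2}=E^2_{0,2}$ and $E^3_{3,0}=E^2_{3,0}=\ppp(R)$. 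Hence $H_2(\GL_2(R))\simeq E^\infty_{0,2}=E^2_{0,2}/\im(d^3_{3,0})$. Writing $H_2(T_2)_\sigma\simeq H_2(\rr)\oplus(\rr\otimes\rr)_\sigma$ and inserting Lemma~\ref{d3}, this becomes $M/\lan r_a\ran$, where
\[
M:=H_2(\rr)\oplus(\rr\otimes\rr)_\sigma\oplus H_2(R)_\rr,\qquad
r_a:=\big(a\wedge(1-a),\,-a\otimes(1-a),\,-2{\bf c}(a,1-a)\big),
\]
and $a$ ranges over $\{a:a,1-a\in\rr\}$.

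For part (i) the decisive step is to uncouple the first summand. Since $c\wedge d+d\wedge c=0$, the rule $c\otimes d\mapsto c\wedge d$ descends to a homomorphism $\pi\colon(\rr\otimes\rr)_\sigma\arr H_2(\rr)=\rr\wedge\rr$. I would then apply the shear automorphism $\Phi$ of $M$, $\Phi(x,t,z)=(x+\pi(t),t,z)$ (inverse $(x,t,z)\mapsto(x-\pi(t),t,z)$). As $\pi(-a\otimes(1-a))=-a\wedge(1-a)$, it sends $r_a$ to $\big(0,-a\otimes(1-a),-2{\bf c}(a,1-a)\big)$, hence carries $\lan r_a\ran$ onto $\{0\}\oplus K$ with $K=\lan\big(a\otimes(1-a),2{\bf c}(a,1-a)\big)\ran$. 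Therefore
\[
H_2(\GL_2(R))\simeq M/(\{0\}\oplus K)\simeq H_2(\rr)\oplus\big((\rr\otimes\rr)_\sigma\oplus H_2(R)_\rr\big)/K,
\]
which is (i) once $H_2(\rr)=H_2(\GL_1(R))$ is recalled. I would further note that $\Phi$ fixes $\{(x,0,0)\}$, the image of $H_2$ of the subgroup $\{\diag(a,1)\}\se\GL_2(R)$, so this surviving $H_2(\GL_1(R))$ is precisely that image; since $\det$ restricts to an isomorphism on $\{\diag(a,1)\}$, the complementary factor is canonically $\coker\big(H_2(\GL_1(R))\arr H_2(\GL_2(R))\big)=\ker\big(H_2(\det)\big)$.

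For part (ii) I would reach $H_2(\SL_2(R))_\rr$ through a second, independent splitting of $H_2(\GL_2(R))$ along $\det$. The section $a\mapsto\diag(a,1)$ splits $1\arr\SL_2(R)\arr\GL_2(R)\overset{\det}{\arr}\rr\arr 1$, so the Lyndon--Hochschild--Serre spectral sequence $\hat E^2_{p,q}=H_p(\rr,H_q(\SL_2(R)))\Rightarrow H_{p+q}(\GL_2(R))$ is available. Here the hypothesis $|R/\mmm_R|\ge 4$ enters: it forces $H_1(\SL_2(R))=0$, whence $\hat E^2_{1,1}=\hat E^2_{2,1}=0$, no differential disturbs $\hat E^2_{0,2}=H_2(\SL_2(R))_\rr$, and the bottom row splits off via the section. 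Thus $\coker\big(H_2(\GL_1(R))\arr H_2(\GL_2(R))\big)\simeq H_2(\SL_2(R))_\rr$, the inclusion again being induced by $\{\diag(a,1)\}$. Comparing with the cokernel computed in (i) yields $H_2(\SL_2(R))_\rr\simeq\big((\rr\otimes\rr)_\sigma\oplus H_2(R)_\rr\big)/K$.

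Granting Lemma~\ref{d3}, which carries all the genuine computation, the remainder is formal. The one point demanding care is the compatibility joining the two arguments: one must verify that the $H_2(\GL_1(R))$ summand produced by the shear in (i) really is the image of $H_2(\{\diag(a,1)\})$, i.e. the same determinant-split summand whose cokernel is named in (ii), so that the two cokernels may be identified. This is also the structural reason the bound tightens from $|R/\mmm_R|\ge 3$ in (i) to $|R/\mmm_R|\ge 4$ in (ii), the latter being exactly what makes $H_1(\SL_2(R))$ vanish.
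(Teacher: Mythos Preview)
Your proof is correct and follows essentially the same route as the paper: compute $H_2(\GL_2(R))$ as $E^\infty_{0,2}$ from the spectral sequence, then split off the $H_2(\GL_1(R))$ summand, and for (ii) compare with the splitting coming from the Lyndon--Hochschild--Serre sequence of $1\to\SL_2(R)\to\GL_2(R)\to\rr\to 1$. Your shear automorphism $\Phi$ is a clean repackaging of the four explicit maps the paper writes down to exhibit the same splitting, and your attention to why the two $H_2(\GL_1(R))$ summands coincide (via the edge map and the section $a\mapsto\diag(a,1)$) makes explicit a compatibility the paper leaves to the reader.
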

\begin{proof}
(i) By an easy analysis of the above spectral sequence and  Lemma~\ref{d3}, one sees that
\begin{align*}
H_2(\GL_2(R)) & \simeq  \Big(H_2(T_2)_\sigma \oplus H_2(R)_\rr\Big)/\im(d_{3,0}^3)\\
& =  \Big(H_2(\rr) \oplus (\rr \otimes \rr)_\sigma\oplus H_2(R)_\rr\Big)/L, 
\end{align*}
where $L=\lan \Big(a\wedge(1-a), -a \otimes (1-a), -2{\bf c}(a,1-a)\Big)| a, 1-a \in \rr \ran$. 
Let $T:=\Big(H_2(\rr) \oplus (\rr \otimes \rr)_\sigma\oplus H_2(R)_\rr\Big)/L$.
From the maps
\begin{align*}
& H_2(\GL_1(R)) \arr T, && x \mt (x, 0,0)+L, \\
& T \arr H_2(\GL_1(R)), && (x, c\otimes d, z)+L \mt x+ c\wedge d,\\
& \Big((\rr\!\otimes\!\rr)_\sigma\!\oplus H_2(R)_\rr\!\Big)/K\! \arr \!T, && 
(a\otimes b,z)\!+\! K \!\mt\! (a\wedge b,\!-a \otimes b, z)\!+\! L, \\
& T \arr \Big((\rr\otimes\rr)_\sigma\oplus H_2(R)_\rr\Big)/K, && 
(x, c\otimes d, z)+L \mt (-c\otimes d,z)+K,
\end{align*}
we obtain the isomorphism 
$T \simeq H_2(\GL_1(R))\oplus \Big((\rr\otimes\rr)_\sigma\oplus H_2(R)_\rr\Big)/K$.
\par (ii) Since $|R/\mmm_R|\geq 4$, $H_1(\SL_2(R))=0$. Then from the corresponding 
Lyndon-Hochschild-Serre spectral sequence of the extension 
\[
1\arr \SL_2(R)\arr \GL_2(R)\arr \rr \arr 1,
\]
it is easy to show that $H_2(\GL_2(R))\simeq  H_2(\GL_1(R))\oplus H_2(\SL_2(R))_\rr$. 
Now the claim follows from (i).
\end{proof}

%%%%%%%%%%%%%%%%%%%%%%%%%%%%%%%%%%%%%%%%%%%%%%%%%%%%%%%%%%%%%%%%%%%%%%%%
\section{Homology of affine groups}\label{aff}
%%%%%%%%%%%%%%%%%%%%%%%%%%%%%%%%%%%%%%%%%%%%%%%%%%%%%%%%%%%%%%%%%%%%%%%%

Lemma \ref{T2-B2} shows that the groups $H_i(\rr, H_m(R^n, \z))$ 
are important in the study of the homology of affine groups.
They already have been studies by Nesterenko and Suslin \cite{suslin1985}, \cite{nes-suslin1990} 
over local rings with infinite residue field and by Hutchinson \cite{hutchinson2013}, 
\cite{hutchinson2014} over a large class of local rings. 

\begin{prp}\label{hutchinson4}
Let $R$ be a local ring. If $R/\mmm_R$ is finite of order $p^{d}$, we suppose 
that $1\leq m <(p-1)d$.
\par  {\rm (i)} If $m=1$ or $m=2$, then for any $r \geq 0$, $H_r(\rr, H_m(R^n, \z))=0$.
\par {\rm (ii)} For any prime field $k$ and any $r \geq 0$, $H_r(\rr, H_m(R^n, k))=0$.
\par {\rm (iii)} If $R$ is a domain or an algebra over a field, then for any $r \geq 0$,
$H_r(\rr, H_m(R^n, \z))=0$.
\end{prp}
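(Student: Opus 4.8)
The plan is to deduce every assertion from Suslin's vanishing criterion (Lemma~\ref{lem-sus}): to kill $H_r(\rr,N)$ for all $r\ge 0$ it suffices to realize $N$ as a module over a commutative ring $A$ together with a homomorphism $\varphi:\rr\arr A^\times$ inducing the given $\rr$-action, and then to verify the single statement $H_0(\rr,A)=0$. The mechanism for producing such $A$ is the diagonal scalar action. The group $H_m(R^n,\z)$ carries a weight filtration coming from the multiplication-by-$t$ endomorphisms $\mu_t$ of $R^n$ ($t\in\z$), on whose $w$-th graded piece every $\mu_t$ acts by $t^w$; since $\mu_a\mu_t=\mu_t\mu_a$, the action of a unit $a\in\rr$ preserves this filtration, and in the tensor-power model the class of $a$ acts by $a^{\otimes w}:=a\otimes\cdots\otimes a$ in $R^{\otimes w}:=R\otimes_\z\cdots\otimes_\z R$ ($w$ factors). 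The standard computation of $H_*$ of abelian groups shows that the weights occurring in degree $m$ satisfy $1\le w\le m$. Thus the problem reduces to two ingredients: (1) identifying each graded piece with an honest $R^{\otimes w}$-module, and (2) proving $H_0(\rr,R^{\otimes w})=0$ for $1\le w\le m$, where $\rr$ acts diagonally.

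Part (i) is the range in which ingredient (1) is transparent. For $m=1$ we have $H_1(R^n,\z)=R^n$, an honest $R$-module with $\varphi=\id$; since $|R/\mmm_R|\ge 3$ there is $a\in\rr$ with $a-1\in\rr$, whence $H_0(\rr,R)=R/\lan(a-1)r:a\in\rr,\ r\in R\ran=0$ exactly as in the proof of Lemma~\ref{T2-B2}, and Lemma~\ref{lem-sus} applies. For $m=2$ we have $H_2(R^n,\z)=\bigwedge^2_\z R^n$, which is an $\rr$-subquotient of the honest $R\otimes_\z R$-module $R^n\otimes_\z R^n$ (diagonal action $a\mapsto a\otimes a$). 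I would first kill $H_*(\rr,R^n\otimes_\z R^n)$ by establishing $H_0(\rr,R\otimes_\z R)=0$, and then transfer the vanishing to $\bigwedge^2_\z R^n$ through the alternating and symmetric exact sequences relating $\bigwedge^2$, $\otimes^2$ and $S^2$; here one must treat the prime $2$ with care, since these functors agree only after inverting $2$.

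For the key vanishing $H_0(\rr,R\otimes_\z R)=0$, the plan is a contradiction argument on the spectrum of $R\otimes_\z R$. Writing $J$ for the ideal generated by $\{a\otimes a-1:a\in\rr\}$, suppose $J$ is proper and choose a maximal ideal $\mathfrak{P}\supseteq J$ with residue field $K$ and the two structure maps $\alpha,\beta:R\arr K$. Then $\alpha(a)\beta(a)=1$ for every $a\in\rr$, i.e. $\beta=\alpha^{-1}$ on $\rr$. Feeding in units $a$ with $1-a\in\rr$ and using that $\alpha,\beta$ are ring homomorphisms forces $u=\alpha(a)$ to satisfy $(1-u)(1-u^{-1})=1$, that is $u^2-u+1=0$, so $\alpha$ maps the relevant units into the finite set of primitive sixth roots of unity in $K$; exploiting multiplicative closure on suitable configurations of units (available because $|R/\mmm_R|$ is large) forces $\alpha(\rr)$ out of this set, a contradiction. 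Hence $J=R\otimes_\z R$ and $H_0(\rr,R\otimes_\z R)=0$.

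For parts (ii) and (iii) with general $m$, ingredient (1) is the crux and dictates the hypotheses, since the integral homology must be split into genuine $R^{\otimes w}$-modules, which is obstructed by the $m!$-torsion relating the Schur functors. Over a domain of characteristic zero or a $\q$-algebra, $R$ is $\z$-torsion free and $H_m(R^n,\z)=\bigwedge^m_\z R^n$ is pure of weight $m$; over an $\F_p$-algebra, $R^n$ is an $\F_p$-vector space and $H_*(R^n,\z)$ is the known exterior-times-divided-power algebra, whose weight pieces are manifestly of tensor type and whose weights are $\le m$ (the Bockstein-generated classes in degree $2i-1$ carry weight $i\le 2i-1$). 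Prime-field coefficients (part (ii)) linearize everything and, via universal coefficients, reduce $H_m(R^n,k)$ to $H_m(R^n,\z)\otimes k$ and $\mathrm{Tor}(H_{m-1}(R^n,\z),k)$, the latter shifting the weight down by one, so again all weights are $\le m$. Ingredient (2) for general $w\le m$ is the generalization of the maximal-ideal argument above, and is precisely where the hypothesis $1\le m<(p-1)d$ is consumed: together with $(p-1)d\le p^{d}-1=|R/\mmm_R|-1$ it guarantees $w\le m<|R/\mmm_R|-1$, so the residue field is large enough for the required unit configurations to exist and force $H_0(\rr,R^{\otimes w})=0$. I expect ingredient (2) at general weight $w$, together with the prime-$2$ bookkeeping in the Schur-functor step of part (i) and part (iii), to be the main obstacle; the weight bound and the reduction to $R^{\otimes w}$-modules are comparatively formal.
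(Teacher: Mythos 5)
Your overall frame --- Suslin's criterion (Lemma~\ref{lem-sus}) together with a reduction of $H_m(R^n,\z)$ to pieces of ``tensor type'' carrying the diagonal $\rr$-action --- is the same frame as in the proofs the paper actually invokes (the paper proves this proposition by citation: Lemmas~3.17 and~3.18 of \cite{hutchinson2014} for (i) and (iii), and the method of \cite{nes-suslin1990}, \cite{suslin1985} for (ii)), and your treatment of $m=1$ is complete. But there are two genuine gaps, and one of them is an error rather than an omission. The error is in your ``ingredient (2)'': you claim the hypothesis $m<(p-1)d$ is consumed only through the consequence $w\le m<|R/\mmm_R|-1$, the residue field then being ``large enough'' to force $H_0(\rr,R^{\otimes w})=0$. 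That is false: this vanishing is governed by the residue characteristic, not by cardinality. For $R=\F_4$ and $w=2<3=|R/\mmm_R|-1$ one has $\F_4\otimes_\z\F_4\simeq\F_4\times\F_4$ via $x\otimes y\mt (xy,xy^2)$, so $a\otimes a\mt (a^2,a^3)=(a^2,1)$, and the ideal generated by all $a\otimes a-1$ lies in $\F_4\times 0$; hence $H_0(\F_4^\times,\F_4\otimes_\z\F_4)\simeq\F_4\neq 0$. Likewise $H_0(\rr,R^{\otimes w})\neq 0$ for $R=\F_{16}$, $w=4<15$, and for $R=\F_{32}$, $w=5<31$, because $1+p+\cdots+p^{d-1}=p^d-1$; this is exactly why the paper must exclude residue fields of order $16$ and $32$, but not $25$ or $27$, in its main theorems: the operative invariant is $(p-1)d$, not $p^d$. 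In your maximal-ideal argument the enemy is a relation $\prod_{j=1}^{w}\alpha_j(a)=1$ for all $a\in\rr$ among ring homomorphisms $\alpha_j:R\arr K$; in residue characteristic $p$ such relations arise from Frobenius twists whenever some sum $p^{i_1}+\cdots+p^{i_w}$ is divisible by $p^d-1$, and excluding this for $w<(p-1)d$ is a base-$p$ digit-sum argument, not a count of unit configurations. Moreover your sixth-roots-of-unity trick does not generalize past $w=2$: for $w\ge 3$ the constraints $\prod_j u_j=1$ and $\prod_j(1-u_j)=1$ cut out a positive-dimensional variety, so there is no finite set from which to expel $\alpha(\rr)$. (Even at $w=2$ the root set degenerates in characteristic $2$ and $3$ --- cube roots of unity, respectively $-1$ --- which is precisely where the cardinality heuristic misleads.)

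The second gap is the transfer from tensor powers to the actual homology groups, which you flag but underestimate as bookkeeping. Vanishing of $H_*(\rr,V^{\otimes w})$ says nothing by itself about an $\rr$-subquotient of $V^{\otimes w}$: in the long exact sequences one must also kill the complementary pieces, and those are not $R^{\otimes w}$-modules, so Suslin's criterion does not apply to them directly. Already for $m=2$, the kernel $\lan x\otimes x\ran$ of $V\otimes_\z V\arr\bigwedge^2_\z V$ is not stable under multiplication by $a\otimes b$ with $a\neq b$: for $R=V=\F_4$ it contains $1\otimes 1$ but not $(1\otimes\omega)(1\otimes 1)=1\otimes\omega$, whose image $1\wedge\omega$ generates $\bigwedge^2_\z\F_4\simeq\z/2$. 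So $\bigwedge^2_\z(R^n)$ is not an $R\otimes_\z R$-module, and one must instead filter the kernel and endow the graded pieces with twisted module structures (over $R$, over $R/2R$, over rings of symmetric tensors) before the criterion can be applied; the same issue, compounded by divided-power and Bockstein pieces, affects (ii) and (iii), and calling those pieces ``manifestly of tensor type'' begs the question. Finally, reducing (ii) to integral homology via universal coefficients goes the wrong way: the integral statement is the harder one --- that is why (i) stops at $m=2$ and (iii) needs the domain/algebra hypothesis --- and the cited proofs of (ii) work directly with the Hopf-algebra structure of $H_*(R^n,k)$ over the field $k$. These two missing steps are not routine; they are essentially the entire content of the lemmas of Hutchinson that the paper quotes.
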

\begin{proof}
Part (i) follows directly from Lemma \cite[Lemma~3.17]{hutchinson2014}.
The proof of (ii) is similar to the proof of \cite[Proposition~1.10]{nes-suslin1990} 
or \cite[Proposition~1.7]{suslin1985}. For (iii) see \cite[Lemma~3.18]{hutchinson2014}.
In fact, Hutchinson proved (iii) for local domains, but his arguments also works for local 
algebras over fields.
\end{proof}

Let $G_m(R)$ be a subgroup of $\GL_m(R)$ and $G_n(R)$ a subgroup of $\GL_n(R)$ and  
assume that either $\rr I_m \se G_m(R)$ or $\rr I_n \se G_n(R)$. 
Let $M(R)$ be a free submodule of $M_{m,n}(R)$ such that $G_m(R)M(R)=M(R)=M(R)G_n(R)$. Then 
$A_{m,n}(R):={\mtx {G_m(R)} {M(R)} {0} {G_n(R)}}$ is a subgroup of the affine group
$\Aff_{m,n}(R):={\mtx {\GL_m(R)} {M_{m,n}(R)} {0} {\GL_n(R)}}$.

\begin{prp}\label{affine}
Let $R$ be a local ring. If $R/\mmm_R$ is finite, we assume that it is of order $p^{d}$. 
Let the natural homomorphism
\[
\phi_q: H_q(G_m(R) \times G_n(R)) \arr H_q(A_{m,n}(R))
\]
be induced by the inclusion $G_m(R) \times G_n(R) \arr A_{m,n}(R)$.
%, where $G_m(R)$, $G_n(R)$ and $A_{m,n}(R)$ are as above.
\par {\rm (i)} For $0 \leq q \leq 2$, $\phi_q$ is an isomorphism if $q < p-1)d$. 
So $\phi_0$ always is an isomorphism, $\phi_1$  is an isomorphism if 
$|R/\mmm_R|\neq 2$ and $\phi_2$ is an isomorphism if $|R/\mmm_R|\neq 2, 3, 4$.
\par {\rm (ii)} If $R$ is a domain or an algebra over a field, 
then $\phi_q$ is an isomorphism for $0 \leq q < (p-1)d$. 
\par {\rm (iii)} The map  $\phi_q$ is an isomorphism for $0 \leq q <(p-1)d-2$. 
In particular, if $R/\mmm_R$ is infinite, then $\phi_q$ is an isomorphism for any $q$.
\end{prp}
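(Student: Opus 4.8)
The plan is to realize $A_{m,n}(R)$ as a semidirect product and to feed its central scalar action into Proposition~\ref{hutchinson4}. Write $G:=G_m(R)\times G_n(R)$, $M:=M(R)$ and $A:=A_{m,n}(R)$. Writing a general element of $A$ as $\mtx{g}{x}{0}{h}$ with $g\in G_m(R)$, $h\in G_n(R)$, $x\in M$ exhibits $A$ as the semidirect product $A=M\rtimes G$, where $(g,h)\in G$ acts on the additive group $M$ by $x\mapsto gxh^{-1}$, and the block-diagonal inclusion $G\harr A$ splits the projection $A\two G$. The associated Lyndon--Hochschild--Serre spectral sequence reads
\[
E_{a,b}^2=H_a(G,H_b(M))\Rightarrow H_{a+b}(A).
\]
Because the extension splits, the map $\phi_q\colon H_q(G)\arr H_q(A)$ is a section of the edge homomorphism $e\colon H_q(A)\arr E_{q,0}^2=H_q(G)$ (induced by $A\two G$), so $\phi_q$ is always split injective and $e$ is onto, whence $E_{q,0}^\infty=E_{q,0}^2$ automatically and $\ker e=F_{q-1}$, the penultimate stage of the filtration. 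Thus $\phi_q$ is an isomorphism as soon as every graded piece $E_{a,q-a}^\infty$ with $a<q$ vanishes, and this is guaranteed once $E_{a,b}^2=0$ for all $b\geq1$ with $a+b\leq q$. Everything therefore reduces to proving $H_a(G,H_b(M))=0$ for $b\geq1$ in the appropriate range.

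For this I use the hypothesis $\rr I_m\se G_m(R)$ (the case $\rr I_n\se G_n(R)$ is identical up to inverting the scalar). The scalar matrices $\{(\lambda I_m,I_n):\lambda\in\rr\}$ form a central subgroup $\rr\se G$ acting on $M\cong R^t$, $t=\mathrm{rank}_R M$, by ordinary scalar multiplication $x\mapsto\lambda x$; hence $H_b(M)\cong H_b(R^t,\z)$ as a module over this central $\rr$, with exactly the action occurring in Proposition~\ref{hutchinson4}. Feeding the central extension $1\arr\rr\arr G\arr G/\rr\arr1$ into its own spectral sequence $H_c\big(G/\rr,H_e(\rr,H_b(M))\big)\Rightarrow H_{c+e}(G,H_b(M))$ shows that $H_a(G,H_b(M))=0$ once $H_e(\rr,H_b(R^t,\z))=0$ for every $e\geq0$. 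For part (i) I invoke Proposition~\ref{hutchinson4}(i): for $b\in\{1,2\}$ with $b<(p-1)d$ this vanishing holds, giving $\phi_q$ an isomorphism for $0\leq q\leq2$ whenever $q<(p-1)d$; reading off the small residue fields turns $1<(p-1)d$ into $|R/\mmm_R|\neq2$ (for $\phi_1$, hence also $\phi_2$) and $2<(p-1)d$ into $|R/\mmm_R|\neq2,3,4$ (for $\phi_2$). For part (ii) I instead invoke Proposition~\ref{hutchinson4}(iii), valid for all $b$ with $1\leq b<(p-1)d$ when $R$ is a domain or an algebra over a field, and conclude at once that $\phi_q$ is an isomorphism for $0\leq q<(p-1)d$.

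Part (iii) is the delicate one, since for a general local ring the integral vanishing of $H_e(\rr,H_b(R^t,\z))$ is unavailable for $b\geq3$; only the prime-field statement $H_e(\rr,H_b(R^t,k))=0$ of Proposition~\ref{hutchinson4}(ii) is at hand. The plan is to run the argument of the previous two paragraphs verbatim with coefficients in each prime field $k$ (replacing $H_b(M)$ by $H_b(M,k)$), which yields that $\phi_q\otimes k\colon H_q(G,k)\arr H_q(A,k)$ is an isomorphism for $k=\q$ and $k=\F_p$ whenever $q<(p-1)d$. I then descend to $\z$ by a universal-coefficient dévissage. Since $\phi_q$ is split injective, $C_q:=\coker\phi_q$ is a direct summand of $H_q(A,\z)$, and the naturality of
\[
0\arr H_q(-,\z)\otimes_\z\F_p\arr H_q(-,\F_p)\arr\tors(H_{q-1}(-,\z),\F_p)\arr0
\]
shows that $\phi_q\otimes\F_p$ being an isomorphism forces $C_q\otimes_\z\F_p=0$, that $\phi_{q+1}\otimes\F_p$ being an isomorphism forces the $p$-torsion $C_q[p]=\tors(C_q,\F_p)=0$, and that $\phi_q\otimes\q$ being an isomorphism forces $C_q\otimes_\z\q=0$. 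A group that is $p$-divisible and $p$-torsion-free for every $p$ and annihilated by $-\otimes_\z\q$ is trivial, so $C_q=0$ and $\phi_q$ is an isomorphism. This argument consumes the field isomorphisms in degrees $q$ and $q+1$, i.e. it needs $q+1<(p-1)d$, which in particular secures the asserted range $0\leq q<(p-1)d-2$; and when $R/\mmm_R$ is infinite the field vanishing holds in all degrees, so $\phi_q$ is an isomorphism for every $q$. The main obstacle is precisely this dévissage: one must track which degree each $\otimes$- and $\tors$-term controls and exploit the split injectivity of $\phi_q$, which is exactly what makes the torsion-free-divisible vanishing go through.
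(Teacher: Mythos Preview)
Your argument is correct and follows the same architecture as the paper: realize $A_{m,n}(R)$ as $M\rtimes G$, run the Lyndon--Hochschild--Serre spectral sequence, and kill the $E^2_{a,b}$ terms for $b\ge 1$ by passing through the central copy of $R^\times$ and invoking Proposition~\ref{hutchinson4}. This is exactly the method of \cite[Theorem~1.9]{suslin1985} to which the paper refers.

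The only genuine point of comparison is part~(iii). The paper appeals to Lemma~\ref{isom}, a general statement for an arbitrary group homomorphism: prime-field isomorphisms in degrees $0,\dots,n$ force integral isomorphisms in degrees $0,\dots,n-2$, via a two-step d\'evissage through $\z/p^d$ and then $\q/\z$. Your d\'evissage instead exploits the extra information that $\phi_q$ is split injective, so that $C_q=\coker\phi_q$ is a direct summand of $H_q(A,\z)$; the universal coefficient sequence then yields a short exact sequence $0\to C_q\otimes k\to D_q^k\to \tors(C_{q-1},k)\to 0$ for the cokernel $D_q^k$ of $H_q(\phi,k)$, and vanishing of $D_q^k$ and $D_{q+1}^k$ for all prime fields forces $C_q$ to be torsion-free, divisible, and rationally trivial, hence zero. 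This costs only one degree rather than two, so your argument in fact establishes the isomorphism for $0\le q<(p-1)d-1$, slightly stronger than the stated range $0\le q<(p-1)d-2$. The paper's Lemma~\ref{isom} has the advantage of applying without any splitting hypothesis, while your version is sharper in the situation at hand.
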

\begin{proof}
This can be done as the proof of \cite[Theorem~1.9]{suslin1985}.
For (iii) we also need the next lemma.
\end{proof}

\begin{lem}\label{isom} 
Let $f: H\arr G$ be a homomorphism of groups and let $n$ be a positive integer.
If $H_i(f): H_i(H, k) \arr H_i(G,k)$ is an isomorphism for any $0 \leq i \leq n$,
and any prime field $k$, then $H_i(f): H_i(H) \arr H_i(G)$ is an isomorphism 
for any $0 \leq i \leq n-2$.
\end{lem}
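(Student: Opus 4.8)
Let $f: H \arr G$ be a homomorphism of groups and let $n$ be a positive integer. If $H_i(f): H_i(H, k) \arr H_i(G,k)$ is an isomorphism for any $0 \leq i \leq n$ and any prime field $k$, then $H_i(f): H_i(H) \arr H_i(G)$ is an isomorphism for any $0 \leq i \leq n-2$.

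The plan is to package the comparison map into a single mapping cone and then exploit the torsion/torsion-free dichotomy forced by the two families of prime fields $\q$ and $\F_p$. Concretely, let $A_\bullet$ and $B_\bullet$ denote the (normalized) bar complexes computing $H_\bullet(H)$ and $H_\bullet(G)$; these are complexes of free abelian groups, and $f$ induces a chain map $\phi\colon A_\bullet \arr B_\bullet$. Let $D_\bullet:=\mathrm{cone}(\phi)$, so that $D_\bullet$ is again a complex of free abelian groups with $D_i = A_{i-1}\oplus B_i$. Since each $D_i$ is free, for every prime field $k$ there is a natural identification $D_\bullet\otimes_\z k \simeq \mathrm{cone}(\phi\otimes_\z k)$, and $H_i(D_\bullet\otimes_\z k)$ fits into the long exact sequence relating it to $H_i(H,k)$ and $H_i(G,k)$.

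First I would record the standard translation between the cone and the map $\phi$. From the long exact homology sequence of $D_\bullet$ one checks that if $H_i(\phi)$ is an isomorphism for $0\le i\le n$ then $H_i(D_\bullet)=0$ for $0\le i\le n$; conversely, if $H_i(D_\bullet)=0$ for $0\le i\le m$ then $H_i(\phi)$ is an isomorphism for $0\le i\le m-1$. Applying the first implication with $k$-coefficients, the hypothesis that $H_i(f)\colon H_i(H,k)\arr H_i(G,k)$ is an isomorphism for all $0\le i\le n$ and all prime fields $k$ yields $H_i(D_\bullet\otimes_\z k)=0$ for all $0\le i\le n$ and all prime fields $k$.

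The heart of the argument is the universal coefficient theorem for the complex of free abelian groups $D_\bullet$, which gives for each $i$ and each field $k$ the short exact sequence
\[
0\arr H_i(D_\bullet)\otimes_\z k\arr H_i(D_\bullet\otimes_\z k)\arr \tors(H_{i-1}(D_\bullet),k)\arr 0.
\]
Taking $k=\q$ (so the $\tors$ term vanishes) and using the vanishing above shows that $H_i(D_\bullet)\otimes_\z\q=0$, i.e. $H_i(D_\bullet)$ is torsion, for $0\le i\le n$. Taking $k=\F_p$ and reading off the $\tors$ term shows that $\tors(H_{i-1}(D_\bullet),\F_p)=H_{i-1}(D_\bullet)[p]=0$ for all primes $p$, i.e. $H_{i-1}(D_\bullet)$ is torsion-free, for $0\le i\le n$; equivalently $H_j(D_\bullet)$ is torsion-free for $0\le j\le n-1$. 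On the overlap $0\le j\le n-1$ the group $H_j(D_\bullet)$ is simultaneously torsion and torsion-free, hence zero.

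Thus $H_i(D_\bullet)=0$ for $0\le i\le n-1$, and the converse translation (with $m=n-1$) gives that $H_i(\phi)=H_i(f)$ is an isomorphism for $0\le i\le n-2$, as required. The only delicate point is the index bookkeeping: the loss of two degrees is unavoidable and arises in two separate places — one degree is lost passing from the $\q$- and $\F_p$-information to the vanishing of $H_\bullet(D_\bullet)$ (torsion-freeness is only controlled up to degree $n-1$), and a second degree is lost in the final passage from vanishing of cone homology back to $H_i(f)$ being an isomorphism. Beyond this, one must only make sure the cone is formed from complexes of \emph{free} (hence flat) abelian groups, so that both $-\otimes_\z k$ commutes with taking the cone and the universal coefficient sequence is available.
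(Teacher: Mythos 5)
Your proof is correct, but it takes a genuinely different route from the paper. The paper works entirely with coefficient long exact sequences: starting from the isomorphisms with $\z/p$ coefficients it bootstraps, by induction on $d$ and the five lemma applied to $0 \arr \z/p^{d-1} \arr \z/p^{d} \arr \z/p \arr 0$, to isomorphisms with $\z/p^d$ coefficients in degrees $\leq n-1$ (this is where the first degree is lost); it then passes to $\q/\z$ coefficients via $\q/\z\simeq \bigoplus_p \varinjlim \z/p^d$ and compatibility of homology with direct limits, and finally uses the sequence $0 \arr \z \arr \q \arr \q/\z \arr 0$ and the five lemma again to reach integral coefficients in degrees $\leq n-2$ (the second lost degree). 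You instead work at the chain level: you form the mapping cone of the induced map of bar complexes, translate the hypothesis into acyclicity of $D_\bullet\otimes_\z k$ in degrees $\leq n$, and then play the universal coefficient theorem for $k=\q$ against $k=\F_p$ to conclude that $H_j(D_\bullet)$ is simultaneously torsion and torsion-free for $0\leq j\leq n-1$, hence zero; your two lost degrees occur at the ${\rm Tor}$-shift in the UCT and at the final translation from cone acyclicity back to $H_i(f)$ being an isomorphism. Both arguments are complete and of comparable length. Your version is more conceptual and isolates clearly where each degree is lost, at the price of invoking the chain-level facts that group homology with any coefficient ring is computed by a single complex of free abelian groups (the bar complex) and that the UCT applies to it; the paper's version is purely formal, using only the coefficient long exact sequences and the five lemma, and never needs a chain-level model. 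One tiny imprecision on your side, which does not affect correctness: flatness is not needed for $-\otimes_\z k$ to commute with forming the cone (that is automatic, degreewise), it is needed only for the universal coefficient sequence and for identifying $H_i(A_\bullet\otimes_\z k)$ with $H_i(H,k)$.
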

\begin{proof}
Let $p$ be a prime. Then from the long exact sequence of the homology of $H$ and $G$
applied to $0 \arr \z/p^{d-1} \arr \z/p^{d} \arr \z/p \arr 0$ and by induction on $d$, we 
see that $H_i(H, \z/p^{d}) \arr H_i(G,\z/p^{d})$ is an isomorphism for 
any $0 \leq i \leq n-1$ and any $d \geq 1$. This together with the fact that 
$\q/\z\simeq \bigoplus_{p \ prime} \underset{\larr}{\lim}\ \z/p^{d}$, imply that
$H_i(H, \q/\z) \arr H_i(G, \q/\z)$ is an isomorphism for any $0 \leq i \leq n-1$.
Now by applying a similar method to the exact sequence 
$0 \arr \z \arr \q \arr \q/\z \arr 0$, we see that $H_i(H,\z) \arr H_i(G, \z)$ is an
isomorphism for any $0 \leq i \leq n-2$.
\end{proof}

\begin{exa}\label{finite-fields}
Let $B_2(\F_q)$ and $T_2(\F_q)$
denote $B_2$ and $T_2$ over the finite field $R=\F_q$. If $q\neq 2, 3, 4, 8$, 
then by Proposition \ref{affine}, $H_i(B_2(\F_q))\simeq H_i(T_2(\F_q))$ for 
$0\leq i\leq 3$.
In this example we will discuss these groups when $q=2, 3,4,8$. Note that 
%by Lemma~\ref{h2-h3},
\[
\begin{array}{c}
H_2(\F_q)\simeq \bigwedge_\z^2 \F_q \ \ \ \text{and}\ \ \ 
H_3(\F_q) \simeq \bigwedge_\z^3 \F_q \oplus (\F_q \otimes \F_q)^{-\sigma},
\end{array}
\]
where $-\sigma(a\otimes b)=b\otimes a$ (see \cite[Lemma~5.5]{suslin1991}, 
\cite[p.~38]{hutchinson2013}). 
%The spectral sequence $\EE_{r,s}^2$ is the one that appears in the proof of Lemma \ref{B2-T2}.
\medskip

(i) $R=\F_2$: In this case $B_2(\F_2)\simeq \z/2$ and $T_2(\F_2)=\{1\}$. Thus
\[
H_1(B_2(\F_2))\simeq H_1(T_2(\F_2))\oplus \z/2, \ \ \ \ 
H_2(B_2(\F_2))\simeq H_2(T_2(\F_2))
\]
and
\[ 
H_3(B_2(\F_2))\simeq  H_3(T_2(\F_2)) \oplus \z/2.
\]

(ii) $R=\F_3$: Since $H_2(\F_3)=0$, for any $r\geq 0$ we have $\EE_{r,2}^2=0$,
where this spectral sequence was discussed in the proof of Lemma \ref{T2-B2}. Moreover,
$H_3(\F_3)\simeq (\F_3 \otimes \F_3)^{-\sigma}=\{0,1\otimes 1, 1\otimes 2\}$
and by a direct computation we see that the action of $\F_3^\times$ on $H_3(\F_3)$ 
is trivial (note that $\F_3^\times$ acts diagonally on $(\F_3 \otimes \F_3)^{-\sigma})$.
Thus $H_3(\F_3)_{\F_3^\times}\simeq \z/3$. 
Now from the spectral sequence $\EE_{r,s}^2$ we get
\[
H_1(B_2(\F_3))\simeq H_1(T_2(\F_3)), \ \ \ \
H_2(B_2(\F_3))\simeq H_2(T_2(\F_3))
\]
and
\[
H_3(B_2(\F_3))\simeq H_3(T_2(\F_3)) \oplus \z/3.
\]

(iii) $R=\F_4$:  Clearly $H_2(\F_4)\simeq \bigwedge_\z^2\F_4 \simeq \z/2$
and thus  $H_2(\F_4)_{\F_4^\times}\simeq \z/2$. These show that
the action of $T_2(\F_4)$ on $H_2(\F_4)$ is trivial. Thus using the Universal Coefficient
Theorem one can show that $\EE_{1,2}^2$ and $\EE_{2,2}^2$ are trivial.
By applying the K\"unneth formula to $(\F_4, +)\simeq \z/2 \oplus \z/2$, one sees that
$H_3(\F_4)$ has $8$ elements. Thus if  $\F_4=\{0,1, \alpha, \alpha+1\mid \alpha^2= \alpha+1\}$, 
then
\begin{align*}
H_3(\F_4)\simeq (\F_4 \otimes \F_4)^{-\sigma} =\lan & 1\otimes 1, \alpha \otimes \alpha,
1 \otimes \alpha+\alpha \otimes 1\ran\\
=\{ & 0, 
1\otimes 1, 
\alpha \otimes \alpha,
1 \otimes \alpha+\alpha \otimes 1,
1 \otimes 1 + \alpha \otimes \alpha,\\ 
& 1\otimes 1 + 1 \otimes \alpha + \alpha \otimes 1, 
\alpha \otimes \alpha + 1 \otimes \alpha + \alpha \otimes 1,\\
& 1\otimes 1 +\alpha \otimes \alpha + 1 \otimes \alpha + \alpha \otimes 1\}.
\end{align*}
Now by a direct calculation we have 
$H_3(\F_4)_{\F_4^\times}\simeq 
\Big((\F_4 \otimes \F_4)^{-\sigma}\Big)_{\F_4^\times}\simeq \z/2$.
Finally from the spectral sequence $\EE_{r,s}^2$ we get the isomorphisms
\[
H_1(B_2(\F_4))\simeq H_1(T_2(\F_4)), \ \ \ \
H_2(B_2(\F_4))\simeq H_2(T_2(\F_4)) \oplus \z/2
\]
and
\[
H_3(B_2(\F_4))\simeq H_3(T_2(\F_4)) \oplus \z/2.
\]

(iv) $R=\F_8$: Here we need to compute $H_3(\F_8)_{\F_8^\times}$. We have
\[
\begin{array}{c}
H_3(\F_8) \simeq \bigwedge_\z^3 \F_8 \oplus (\F_8 \otimes \F_8)^{-\sigma}
\simeq \z/2 \oplus (\F_8 \otimes \F_8)^{-\sigma}.
\end{array}
\]
Again using the K\"unneth formula one sees that $H_3(\F_8)$ has $2^7$ elements.
Thus $(\F_8 \otimes \F_8)^{-\sigma}$ has $2^6$ elements. If we assume
\[
\F_8=\{0, 1, \alpha, \alpha+1, \alpha^2, \alpha^2+1,  \alpha^2+ \alpha,  \alpha^2+ \alpha+1 
\mid \alpha^3=\alpha+1\},
\]
then 
\begin{align*}
(\F_8 \otimes \F_8)^{-\sigma}= \lan & 1\otimes 1, 
\alpha \otimes \alpha, \alpha^2 \otimes \alpha^2,
1 \otimes \alpha+\alpha \otimes 1, 1 \otimes \alpha^2+\alpha^2 \otimes 1,\\
&\alpha^2 \otimes \alpha+\alpha \otimes \alpha^2\ran.
\end{align*}
Now by a direct computation one sees that 
$\Big((\F_8 \otimes \F_8)^{-\sigma}\Big)_{\F_8^\times}=0$. Therefore 
$H_3(\F_8)_{\F_8^\times}\simeq \z/2$ and hence
\[
H_1(B_2(\F_8))\simeq H_1(T_2(\F_8)), \ \ \ \ H_2(B_2(\F_8))\simeq H_2(T_2(\F_8))
\]
and 
\[
H_3(B_2(\F_8))\simeq H_3(T_2(\F_8)) \oplus \z/2.
\] 
%These computations show that the assumption made on the size of $R/\mmm_R$
%in Proposition \ref{B2-} can not be improved.
\end{exa}

\begin{exa}\label{sln-finite-fields}
Let $R=\F_q$ be the finite field with $q$ elements. If $q\geq 5$, then by 
Corollary \ref{vdk--0} and Proposition 
\ref{hutchinson4} we have $H_2(\SL_2(\F_q))_{\F_q^\times}\simeq K_2^{M}(\F_q)=0$.
If $q=4$, then $H_2(\F_4)_{\F_4^\times}\simeq \z/2$ and $K_2^{M}(\F_4)=0$. 
Thus by Corollary \ref{vdk--0}, we have
\[
H_2(\SL_2(\F_4))_{\F_4^\times} \simeq H_2(\F_4)_{\F_4^\times}\simeq \z/2.
\]
If $q=3$, then $H_2(\F_3)=H_2(\F_3^\times)=0$ and
$K_2^{M}(\F_3)=0$. Now by Corollary~\ref{vdk--0}, $H_2(\GL_2(\F_3))=0$. 
If $q=2$, then $\GL_2(\F_2)=\SL_2(\F_2)\simeq \Sigma_3$.
By looking at the associated 
Lyndon-Hochschild-Serre spectral sequence ${E''}_{p,q}^2$ of the extension 
\[
1 \arr A_3 \arr \Sigma_3 \arr \Sigma_3/A_3 \arr 1,
\]
for any pair $(p,q)=(p,2s)$ or 
$(p,q)=(2r,0)$, we have ${E''}_{p,q}^2=0$. Since the action of $\Sigma_3/A_3$ on 
$A_3$ is non-trivial, ${E''}_{0,1}^2=H_0(\Sigma_3/A_3, A_3)=0$. Thus by Lemma \ref{dwyer} below,
%\cite[Theorem~1]{dwyer1975}, 
${E''}_{p,1}^2=H_p(\Sigma_3/A_3, A_3)=0$ for all 
$p\geq 0$. An easy analysis of the above spectral sequence implies that 
$H_1(\SL_2(\F_2))\simeq \z/2$, $H_2(\SL_2(\F_2))=0$. Therefore, 
%using Proposition \ref{vdk}, we can prove the following well-known result
\[
H_2(\GL_2(\F_q))=\begin{cases}
0& \text{if $q \neq 4$}\\
\z/2 & \text{if $q = 4$.}
\end{cases}
\]
\end{exa}

\begin{lem}\label{dwyer}
Let $G$ be an abelian group and $M$ a finitely generated $G$-module such that $H_0(G,M)=0$.
Then for any $n\geq 0$, $H_n(G,M)=0$. 
\end{lem}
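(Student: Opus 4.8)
The plan is to exploit the hypothesis that $G$ is abelian, which makes $A:=\z[G]$ a \emph{commutative} ring and $M$ a finitely generated module over it. First I would reinterpret the assumption: since $H_0(G,M)=M_G=M/I_GM$, where $I_G=\ker(\epsilon)$ is the augmentation ideal of $A$ and $\epsilon\colon\z[G]\arr\z$ is the augmentation, the condition $H_0(G,M)=0$ says exactly that $I_GM=M$.

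The arithmetic heart of the argument is then a single application of the determinant-trick (Cayley--Hamilton) form of Nakayama's lemma. Because $M$ is finitely generated over the commutative ring $A$ and satisfies $I_GM=M$, there exists an element $x\in 1+I_G$ with $xM=0$; note that this version of Nakayama requires \emph{no} local or noetherian hypothesis, which is essential here since $\z[G]$ need not be noetherian when $G$ is infinite. In particular $\epsilon(x)=1$, because $x-1\in I_G=\ker(\epsilon)$.

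The main step is a standard functoriality fact for group homology. For each $g\in G$ the multiplication map $\ell_g\colon M\arr M$, $m\mt gm$, is a homomorphism of $G$-modules (as $G$ is abelian), and since conjugation by $g$ acts trivially on the abelian group $G$, the map $\ell_g$ induces the identity on $H_n(G,M)$ for every $n$ (see \cite[Chap.~III]{brown1994}). By $\z$-linearity it follows that an arbitrary $r=\sum_g c_g\, g\in\z[G]$ acts on $H_n(G,M)$ as multiplication by $\epsilon(r)=\sum_g c_g$.

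Applying this to the element $x$ finishes the proof: on the one hand multiplication by $x$ induces multiplication by $\epsilon(x)=1$, i.e.\ the identity, on $H_n(G,M)$; on the other hand $xM=0$ means that multiplication by $x$ is the zero endomorphism of $M$, so it induces $0$ on $H_n(G,M)$. Hence $\id=0$ on $H_n(G,M)$, and therefore $H_n(G,M)=0$ for all $n\ge 0$. I expect the only genuine subtlety to be the invocation of Nakayama's lemma without finiteness assumptions on $A$; the finite generation of $M$ as a $\z[G]$-module is used precisely there, and no hypothesis on $G$ beyond commutativity is needed.
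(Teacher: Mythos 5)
Your proof is correct, and it takes essentially the same route as the paper, whose ``proof'' is only the citation to Dwyer (1975, pp.~11--12): the argument there for the abelian case is exactly your combination of the determinant-trick (non-noetherian) Nakayama lemma, producing $x\in 1+I_G$ with $xM=0$, with the fact that each $g\in G$ induces the identity on $H_n(G,M)$, so that $x$ acts on $H_n(G,M)$ as both $\epsilon(x)=1$ and $0$. Your two points of care --- that finite generation of $M$ over $\z[G]$ is used only in Nakayama, and that commutativity of $G$ is what makes each $\ell_g$ a map of $G$-modules --- are precisely the right ones, so nothing is missing.
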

\begin{proof}
See \cite[p.~11--12]{dwyer1975}.
\end{proof}

For a subgroup $H$ of a group $G$ and a $G$-module $M$, the natural map $H_n(H,M) \arr H_n(G, M)$
is called the corestriction map and is denoted by 
\[
\corr_G^H: H_n(H,M) \arr H_n(G, M).
\]
When the index of $H$ in $G$ is finite, i.e.  $[G:H]\leq \infty$, for any $n\geq 0$
there is a restriction map, called  transfer map,
\[
\res_H^G: H_n(G,M) \arr H_n(H, M),
\]
such that
\[
\corr_G^H\circ \res_H^G=[G:H]\id_{H_n(G,M)},
\]
\cite[Proposition~9.5, Chap.~III]{brown1994}. 

In case $G$ is finite, by putting $H=\{1\}$, one sees that $H_n(G,M)$ is 
annihilated by $|G|$ for all $n>0$ \cite[Corollary~10.2, Chap.~III]{brown1994}. 
It is well known that when $M$ is a finitely generated $G$-module, then 
$H_n(G,M)$ is finite for all $n>0$.
% \cite[Theorem~10.3, Chap.~III]{weibel1994}. 

Let $G$ be a finite group. For a $g \in G$, let $H_n(H) \arr H_n(gHg^{-1})$, 
$z \mt g.z$, be induced by the natural map $H \arr gHg^{-1}$. We say $z\in H_n(H)$
is $g$-invariant if
\[
\res_{H\cap gHg^{-1}}^H(z)=\res_{H\cap gHg^{-1}}^{gHg^{-1}}(g.z).
\]
Let 
\[
\inv_G(H_n(H)):=\Big\{z\in H_n(H)\mid \text{$z$ is $g$-invarinat for all $g \in G$}\Big\}.
\]
If $H$ is a $p$-Sylow subgroup of $G$, then one can show that 
\[
H_n(G)_{(p)}\simeq \inv_G(H_n(H)),
\]
where $H_n(G)_{(p)}$ is the $p$-primary component of $H_n(G)$ 
\cite[Theorem~10.3, Chap.~III]{brown1994}. 
Moreover, if $H$ is normal in $G$, then 
\[
H_n(G)_{(p)}\simeq \inv_G(H_n(H))\simeq H_n(H)_{G/H}.
\]
For $g \in G$ and $z\in H_n(H)$ the condition 
$\res_{H\cap gHg^{-1}}^H(z)=\res_{H\cap gHg^{-1}}^{gHg^{-1}}(g.z)$ is 
trivially satisfied if $H\cap gHg^{-1}=1$. Thus to determine $\inv_G(H_n(H))$
for a $p$-Sylow subgroup $H$, it is enough to consider only the set 
$\Conj(G,H)$ of those elements $g$ for which $H\cap gHg^{-1}\neq 1$:
\[
\Conj(G,H):=\{g \in G \mid H\cap gHg^{-1}\neq 1\}.
\]

\begin{exa}\label{p-sylow}
Let $F=\F_{p^m}$, where $p$ is a prime.
Let $N_2(F)={\mtx 1 F 0 1}\simeq F$. Then $N_2(F)$ is a $p$-Sylow subgroup of 
$\GL_2(F)$. It is easy to see that $A \in \Conj(\GL_2(F), N_2(F))$ if and only 
if $A \in B_2(F)$. Thus
\[
\Conj(\GL_2(F), N_2(F))=B_2(F)=\Conj(B_2(F), N_2(F)),
\]
and hence
\begin{align*}
H_3(\GL_2(F))_{(p)}& \simeq \inv_{\GL_2(F)}(H_3(N_2(F)))\\
&=\inv_{B_2(F)}(H_3(N_2(F)))\simeq 
H_3(B_2(F))_{(p)}.
\end{align*}
Now by Example \ref{finite-fields}, we have
\[
H_3(\GL_2(\F_{p^m}))_{(p)}=\begin{cases}
\z/p & \text{if $p^m=2,3,4,8$}\\
0    & \text{otherwise.}
\end{cases}
\]
\end{exa}

%%%%%%%%%%%%%%%%%%%%%%%%%%%%%%%%%%%%%%%%%%%%%%%%%%%%%%%%%%%%%%%%%%%%%%%%%%%%%%%%
\section{Stability for the second homology group and the second {\it K}-group}\label{H2}
%%%%%%%%%%%%%%%%%%%%%%%%%%%%%%%%%%%%%%%%%%%%%%%%%%%%%%%%%%%%%%%%%%%%%%%%%%%%%%%%

The homology stability results have many important application in algebraic $K$-theory.
In this article the homology stability for the second and the third homology of the general 
linear group play very important rolls in proving our main results. 

Let  $C_l'(R^n)$ be the free abelian group with a basis consisting of 
$(l+1)$-tuples $(w_0, \dots, w_l)$, where every
$\min\{l+1, n\}$ of  $w_i \in R^n$ are basis of a free direct summand 
of $R^n$ and consider it as $\GL_n(R)$-module in a natural way. Let us 
define the differential operators $\partial_l' : C_l'(R^n) \arr C_{l-1}'(R^n)$
similar to those in the complex $C_\bullet(R^2)$. So we have the complex of $\GL_n(R)$-modules:
\begin{align*}
C_\bullet'(R^n):& \ \
\cdots  \arr C_{n-1}'(R^n) \arr C_{n-2}'(R^n)  \arr \cdots \arr
C_0'(R^n) \arr C_{-1}'(R^n) \arr 0,
\end{align*}
where $C_{-1}'(R^n)=\z$.

\begin{lem}\label{suslin-vdkallen}
%Let $R$ be a local ring. 
The complex  $C_\bullet'(R^n)$ is exact for $-1 \leq  i \leq n-2$.
\end{lem}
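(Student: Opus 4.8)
The plan is to recognize the statement as the acyclicity, in a range controlled by $n$, of the complex of \emph{partial bases} of $R^n$, and to produce a contracting homotopy by prepending suitable unimodular vectors. First I would record the basic dictionary: since $R$ is local, a tuple $(w_0,\dots,w_l)$ lies in the generating set of $C_l'(R^n)$ if and only if the reductions mod $\mmm_R$ of any $\min\{l+1,n\}$ of the $w_i$ are linearly independent over $R/\mmm_R$; in particular, for $l\le n-1$ the generators of $C_l'(R^n)$ are exactly the ordered bases of free direct summands of rank $l+1$. Thus in the whole range $-1\le i\le n-2$ that concerns us, where the boundary maps only involve tuples of length $\le n$ and the partial-basis description applies, the complex is the complex of partial bases, and the claim is that its reduced homology vanishes up to degree $n-2$. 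Surjectivity of the augmentation $\epsilon\colon C_0'(R^n)\arr\z$ is immediate, and exactness at $C_0'(R^n)$ amounts to the connectivity of the graph whose vertices are unimodular vectors and whose edges are bases of rank-two summands, which one checks directly by completing any two given unimodular vectors to a common basis.

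For the higher degrees the natural idea is a contracting homotopy $h$ sending a tuple $(w_0,\dots,w_i)$ to $(v,w_0,\dots,w_i)$ for a fixed unimodular vector $v$; whenever $v$ is in general position with all the tuples occurring in a given cycle $z$ one gets $z=\partial_{i+1}'h(z)$, because $\partial_i'z=0$. Over a local ring with \emph{infinite} residue field such a $v$ always exists (Suslin \cite{suslin1985}): one only has to avoid finitely many proper submodules, so this argument even recovers exactness up to degree $n-1$. The real content of the lemma is that $R$ is allowed to have a small, possibly finite, residue field.

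The hard part will be exactly this: for a small residue field a single prepended vector cannot be made generic with respect to all the finitely many tuples of $z$ simultaneously, so the naive homotopy breaks down. Here I would follow the method of Van der Kallen, using that every local ring has stable rank one. Stable rank one guarantees that any unimodular configuration of length $<n$ can be completed to a longer one, and the bookkeeping turns these completions, carried out tuple by tuple and organized by an induction on $n$ (restricting the given cycle to the complementary rank-$(n-1)$ summand produced by one completion and invoking the statement for $R^{n-1}$), into an honest partial homotopy. The price of replacing ``one generic vector'' by ``stable-rank-one completions'' is the loss of a single degree, which is precisely why the asserted range is $-1\le i\le n-2$ rather than $-1\le i\le n-1$.

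The main obstacle, and the only delicate point, is verifying that the correction terms arising from the non-generic prepends cancel, so that $\partial'h+h\partial'$ equals the identity in the claimed range; this is the technical heart of the argument and is where the stable-rank-one hypothesis is consumed. The base cases $n=1$ (where only the augmentation is at issue) and $n=2$ (the direct connectivity computation above) serve as the anchor for the induction.
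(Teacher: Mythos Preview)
Your proposal is correct and aligns with the paper's approach: the paper's proof consists solely of the citation ``This follows from \cite[\S 2, Theorem]{vdkallen1980},'' and your sketch outlines precisely the stable-rank-one/prepending argument that underlies Van der Kallen's theorem. In effect you are unpacking what the paper merely cites; the inductive scheme you describe (prepend a vector, handle the non-generic correction terms via $sr(R)=1$, and lose one degree of exactness in the process) is the content of that reference.
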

\begin{proof}
This follows from \cite[\S 2, Theorem]{vdkallen1980}.
\end{proof}

Now as before, 
if the residue field of $R$ is finite, we assume that $|R/\mmm_R|=p^d$. Let $s < (p-1)d-2$. 
If $R$ is a domain or an algebra over a field, we may only assume that $s <(p-1)d$.

Let $L_l':=C_{l-1}'(R^n)$ and let $\mathcal{C}_\bullet(\GL_n(R)) \arr  \z$ be the 
standard resolution of $\GL_n(R)$. From the double complex 
$\mathcal{C}_\bullet(\GL_n(R)) \otimes _{\GL_n(R)} L_\bullet'$
we obtain the first quadrant spectral sequence
\[
{E}_{r, s}^1(n)= H_s(\GL_n(R), L_{r}') \Rightarrow H_{r+s}(\GL_n(R), L_\bullet'),
\]
(see Section \ref{SP}).
It follows from Lemmas \ref{suslin-vdkallen} and \ref{homology-complex} that 
for $0 \leq m \leq n-1$,
\[
H_m(\GL_n(R), L_\bullet')=0.
\]
Let $\sigma_r:=(e_1, \dots, e_r)\in L_r'$, $1 \leq r \leq n$. Then by the Shapiro lemma we have
\[
{E}_{r, s}^1(n):=H_s(\GL_n(R), L_r')\simeq H_s(\stabe_{\GL_n(R)}(\sigma_r)),
\]
where $\stabe_{\GL_n(R)}(\sigma_r)={\mtx {I_r}  {M_{r,n-r}(R)} {0} {\GL_{n-r}(R)}}$.
%Now let $s < (p-1)d-2$. If $R$ is a domain or is an algebra over a field, we may only 
%assume that $s < (p-1)d$. 
Then by Proposition~\ref{affine} we have
\[
{E}_{r, s}^1(n)=H_s(\stabe_{\GL_n(R)}(\sigma_r))\simeq H_s(\GL_{n-r}(R)).
\]
Moreover, it is not difficult to see that for $1\leq r \leq n$, the differential
%If $s < (r-1)d$, then by Lemma \ref{affine}, ${d}_{r, s}^1(n)$ is of the form
\[
{d}_{r, s}^1(n): H_s(\GL_{n-r}(R)) \arr H_s(\GL_{n-r+1}(R))
\] 
%Now it is not difficult to see that, for $1 \le r \le n$ and $0 \leq s < (p-1)d$, we have
is defined as
\[
{d}_{r, s}^1(n)
=\sum_{i=1}^r(-1)^{i+1}H_s(\inc)= \left\{
\begin{array}{ll}
{\rm H_s(inc)} & \textrm{if $r$ is odd}\\
0 & \textrm{if $r$ is even,}
\end{array}
\right.
\]
where $\inc: \GL_{n-r}(R) \arr \GL_{n-r+1}(R)$ is the inclusion map 
\cite[\S~4]{vdkallen1980}, \cite[Lemma~2.4]{nes-suslin1990}. 

\begin{thm}\label{stability1}
Let $R$ be a local ring. If  $R/\mmm_R$ is finite we assume that it has $p^d$ elements. 
\par {\rm (i)} If $|R/\mmm_R|\neq 2,3,4$, then 
\[
H_2(\GL_2(R)) -\!\!\!\two H_2(\GL_3(R)) \overset{\simeq}{\larr} H_2(\GL_4(R)) 
\overset{\simeq}{\larr} \cdots.
\]
\par {\rm (ii)} If $n <(p-1)d$ and $R$ is a domain or an algebra over a field , then 
\[
H_n(\GL_n(R)) -\!\!\!\two H_n(\GL_{n+1}(R)) \overset{\simeq}{\larr} H_n(\GL_{n+2}(R)) 
\overset{\simeq}{\larr}\cdots.
\]
\par {\rm (iii)} If $n <(p-1)d-2$, then 
\[
H_n(\GL_n(R)) -\!\!\!\two H_n(\GL_{n+1}(R)) \overset{\simeq}{\larr} H_n(\GL_{n+2}(R)) 
\overset{\simeq}{\larr}\cdots.
\]
\end{thm}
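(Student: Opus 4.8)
The plan is to run the standard homology-stability machine on the spectral sequence $E^1_{r,s}(N)$ already assembled above, inducting on the homological degree. First I would fix the ambient size $N$ and use that, by Lemmas \ref{suslin-vdkallen} and \ref{homology-complex}, the hyperhomology $H_{r+s}(\GL_N(R),L'_\bullet)$ vanishes in total degrees $\le N-1$, so the spectral sequence $E^1_{r,s}(N)\Rightarrow H_{r+s}(\GL_N(R),L'_\bullet)$ abuts to $0$ there. By Shapiro's lemma together with Proposition \ref{affine} one has $E^1_{r,s}(N)\simeq H_s(\GL_{N-r}(R))$, and the \emph{only} feature distinguishing (i), (ii), (iii) is the range of $s$ in which Proposition \ref{affine} licenses this identification. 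Throughout write $f_{s,m}\colon H_s(\GL_m(R))\to H_s(\GL_{m+1}(R))$ for the stabilization map induced by inclusion.

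Using the computed $d^1$ (the inclusion in odd columns, zero in even columns), I would read off the $E^2$-terms: for $r$ odd $E^2_{r,s}=\ker f_{s,N-r}$, and for $r$ even, including $r=0$, $E^2_{r,s}=\coker f_{s,N-r-1}$. Now induct on $n$, with inductive hypothesis that in every degree $s<n$ the map $f_{s,m}$ is surjective for $m\ge s$ and bijective for $m\ge s+1$. Then every $E^2_{r,s}$ with $1\le r$, $s<n$ and $N-r\ge s+1$ vanishes (the odd columns by the bijectivity half, the even columns by the surjectivity half, both packaged by the single condition $N-r\ge s+1$). Feeding this vanishing into the zero abutment on the relevant diagonals gives the bulk of the assertion: $E^2_{0,n}(m+1)=\coker f_{n,m}$ is surrounded by vanishing terms once $m\ge n+1$, forcing $f_{n,m}$ surjective for $m\ge n+1$, and $E^2_{1,n}(m+1)=\ker f_{n,m}$ forces $f_{n,m}$ injective for $m\ge n+2$.

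The hard part, and the main obstacle, is the two edge instances that the statement actually asserts: the surjection $H_n(\GL_n(R))\twoheadrightarrow H_n(\GL_{n+1}(R))$ and the injectivity of the first isomorphism $H_n(\GL_{n+1}(R))\hookrightarrow H_n(\GL_{n+2}(R))$. These do not fall out of naive $E^2$-vanishing: the terms $\coker f_{n,n}$ and $\ker f_{n,n+1}$ sit on the top nonzero diagonal, the differentials that could kill them emanate from terms of total degree $N$, just above the range where the abutment vanishes, and their lower-left neighbours such as $\coker f_{n-1,n-2}$ are genuinely nonzero since they lie below the stable range. I would therefore secure the edges by the finer analysis of van der Kallen and Nesterenko--Suslin \cite{vdkallen1980}, \cite{nes-suslin1990}, which exploits that $C'_\bullet(R^n)$ is connected one step beyond the naive requirement; this extra connectivity is exactly what pins the edge maps down, and it is these edge cases that feed back into the inductive hypothesis at the next degree. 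In the case $n=2$ of (i), the explicit description of $H_2(\GL_2(R))$ from Corollary \ref{vdk--0} can be inserted directly to obtain the edge surjectivity.

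Finally I would assemble the three cases by tracking only where $E^1_{r,s}(N)\simeq H_s(\GL_{N-r}(R))$ is valid. For (iii), Proposition \ref{affine}(iii) supplies it for $s<(p-1)d-2$, so the induction runs cleanly through degree $n$ provided $n<(p-1)d-2$; for (ii), over a domain or an algebra over a field, Proposition \ref{affine}(ii) improves this to $s<(p-1)d$, giving the bound $n<(p-1)d$; and for (i), where $n=2$, the identification is needed only in degrees $0,1,2$, i.e. that $\phi_0,\phi_1,\phi_2$ be isomorphisms, which by Proposition \ref{affine}(i) holds exactly when $|R/\mmm_R|\neq 2,3,4$. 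I expect all the genuine difficulty to concentrate in the edge step above; the remainder is bookkeeping on the spectral sequence.
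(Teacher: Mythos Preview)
Your proposal is correct and matches the paper's approach: the paper's own proof is simply the one-line reference ``Here one can argue as in \cite[pp.~127--128]{nes-suslin1990}'', and what you have written is precisely an outline of that Nesterenko--Suslin spectral-sequence induction, with the three cases distinguished exactly as they should be by which part of Proposition~\ref{affine} supplies the identification $E^1_{r,s}(N)\simeq H_s(\GL_{N-r}(R))$. The one aside that does not quite land is the suggestion that Corollary~\ref{vdk--0} can be inserted to obtain the edge surjection in case~(i): that corollary describes $H_2(\GL_2(R))$ intrinsically but says nothing about $H_2(\GL_3(R))$, so it does not by itself give surjectivity of the stabilization map---but this is harmless, since your main route through \cite{nes-suslin1990} already handles the edge cases uniformly.
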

\begin{proof}
Here one can argue as in  \cite[pp. 127--128]{nes-suslin1990}.
\end{proof}

\begin{rem}
An unpublished work of Quillen shows that if $F$ is any field with more than two 
elements, then for any $n$ we have the homology stability
\[
H_n(\GL_n(F)) -\!\!\!\two H_n(\GL_{n+1}(F)) \overset{\simeq}{\larr} H_n(\GL_{n+2}(F)) 
\overset{\simeq}{\larr}\cdots.
\]
It is nice and important to know that whether such a result is true when
$F$ is replaced by a local ring $R$ such that $R/\mmm_R$ has more than two elements.
\end{rem}

Now we will show that when $|R/\mmm_R|\neq 2,3,4$, then 
the surjective map $H_2(\GL_2(R)) \two H_2(\GL_3(R))$ is injective.

Let $\widehat{C}_l(R^3)$, $l \geq 0$, be the free abelian group with
a basis consisting of $(l+1)$-tuple $(\lan v_0\ran, \dots, \lan v_l\ran)$,
where every $\min\{l+1, 2\}$ of $v_i \in R^n$ are basis of a free direct 
summand of $R^n$. We define the differential
$\widehat{\partial}_l : \widehat{C}_l(R^3) \arr \widehat{C}_{l-1}(R^3)$, $l \geq 0$,
similar to differentials of $C_\bullet(R^2)$ and we construct the complex 
\[
\widehat{C}_\bullet(R^3): \ \ \ \ \
\cdots \arr  
\widehat{C}_2(R^3) \overset{{\hat{\partial}}_2}{\larr}
\widehat{C}_1(R^3) \overset{{\hat{\partial}}_1}{\larr}
\widehat{C}_0(R^3) \overset{{\hat{\partial}}_0}{\larr}
\widehat{C}_{-1}(R^3)=\z \arr 0
\]
in usual way. 

\begin{lem}\label{R3-exact}
The complex $\widehat{C}_\bullet(R^3)$ is exact for $-1 \leq i < |\PP^{2}(R/\mmm_R)|-1$.
In particular, for any local ring $R$, $\widehat{C}_\bullet(R^3)$ is exact for $-1\leq i < 6$.
\end{lem}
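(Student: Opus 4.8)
The plan is to recognise $\widehat{C}_\bullet(R^3)$ as the chain complex of a ``general position'' simplicial complex whose connectivity is governed by the residue field, exactly as in Hutchinson's treatment of the rank-two complex $C_\bullet(R^2)$ (Lemma~\ref{hutchinson1}). First I would reinterpret the generators via reduction modulo $\mmm_R$. A single $\lan v\ran$ is a generator precisely when $v$ is unimodular, i.e. when its reduction is a nonzero vector, giving a point $[\bar v]\in\PP^2(R/\mmm_R)$; and for $l\ge 1$ a pair $\lan v_i\ran,\lan v_j\ran$ is a basis of a free rank-two direct summand of $R^3$ if and only if $\bar v_i,\bar v_j$ are linearly independent over $R/\mmm_R$, i.e. $[\bar v_i]\neq[\bar v_j]$. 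Thus a tuple $(\lan v_0\ran,\dots,\lan v_l\ran)$ is a generator of $\widehat{C}_l(R^3)$ exactly when the reductions $[\bar v_0],\dots,[\bar v_l]$ are pairwise distinct points of $\PP^2(R/\mmm_R)$.

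Consequently $\widehat{C}_\bullet(R^3)$ is the augmented, ordered, distinct-vertex chain complex of the simplicial complex $\mathcal{T}$ whose vertices are the lines $\lan v\ran\se R^3$ and whose simplices are the finite sets of lines with pairwise distinct reductions. Writing $d:=|\PP^2(R/\mmm_R)|$ and grouping the vertices by their reduction, $\mathcal{T}$ is the simplicial join of the $d$ fibres $B_{[P]}=\{\lan v\ran:[\bar v]=[P]\}$, $[P]\in\PP^2(R/\mmm_R)$: a set of lines spans a simplex iff it contains at most one line from each fibre. Each $B_{[P]}$ is nonempty, since $R$ is local and so every unimodular vector over $R/\mmm_R$ lifts to a unimodular vector over $R$; hence each $B_{[P]}$ is a nonempty discrete set, which is $(-1)$-connected. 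Since the join of nonempty spaces raises connectivity additively, the join of $d$ of them is $(d-2)$-connected. This is the rank-three analogue of the fact, exploited by Hutchinson over $\PP^1$, that the corresponding complex is a ``complex of injective words'' relative to the colouring by residue classes.

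The heart of the proof is then to convert $(d-2)$-connectivity into exactness of $\widehat{C}_\bullet(R^3)$ in the range $-1\le i< d-1$. I would do this by an explicit contracting homotopy that, on a generator sitting in degree $i<d-1$, prepends a line $\lan w\ran$ whose reduction is a point of $\PP^2(R/\mmm_R)$ not occurring among the at most $i+1<d$ reductions already present. The main obstacle is exactly the one familiar from these arguments: no single $\lan w\ran$ is generic with respect to an entire cycle, since a cycle may use every colour of $\PP^2(R/\mmm_R)$. I would resolve this precisely as in the proof of Lemma~\ref{hutchinson1} (\cite[Lemma~3.21]{hutchinson2014}), peeling off one fibre $B_{[P]}$ at a time in accordance with the join decomposition $\mathcal{T}=B_{[P]}\ast\mathcal{T}'$ and contracting inductively; the induction terminates at degree $d-1$, which is also where the top homology of the join lives and hence why the asserted range stops there.

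Finally, the ``in particular'' is immediate from the bound: for any local ring the residue field has at least two elements, so $d=|\PP^2(R/\mmm_R)|\ge|\PP^2(\F_2)|=2^2+2+1=7$ (and $d=\infty$ when $R/\mmm_R$ is infinite). Hence $\widehat{C}_\bullet(R^3)$ is exact for $-1\le i< d-1$, and in every case for $-1\le i<6$.
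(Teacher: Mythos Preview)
Your proposal is correct. The explicit chain-level mechanism you land on---partial homotopies $s_x$ that prepend a line with a fresh residue class in $\PP^2(R/\mmm_R)$, applied one colour at a time until no generator in degree $l-1$ can carry all $l+1$ prescribed colours---is exactly the paper's proof, which writes this argument out verbatim following Hutchinson's treatment of Lemma~\ref{hutchinson1}. Your join interpretation is a genuine conceptual addition that the paper does not make explicit: recognising $\widehat{C}_\bullet(R^3)$ as the ordered chain complex of the join of the nonempty discrete fibres $B_{[P]}$ over $\PP^2(k)$ yields $(d-2)$-connectivity immediately, and together with the standard equivalence of ordered and simplicial chains this already gives the asserted exactness range without ever writing down the homotopies. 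Both routes give the same bound $d-1=|\PP^2(k)|-1$; the paper's explicit version is self-contained at the chain level, while your topological reading makes transparent \emph{why} the size of $\PP^2(k)$ is the governing quantity.
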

\begin{proof}
%The proof of the firs is very similar to the proof of Lemma \ref{hutchinson1}
%given in \cite{hutchinson2014}. We leave the details for the interested readers.
%The second claim follows from the fact that $|\PP^{2}(R/\mmm_R)|\geq |\PP^{2}(\F_2)|=7$.
The proof of this lemma is similar to the proof of Lemma \ref{hutchinson1}
given in \cite{hutchinson2014}.
%The proof given here is similar to the proof of \cite[Lemma~3.21]{hutchinson2014}.
Let $k:=R/\mmm_R$ and consider $\PP^{2}(k)=\{\lan \overline{v}\ran\mid \overline{v}\in k^2-\{0\}\}$.
For any finite subset $S$ of $\PP^{2}(k)$, let $D_l(S)$ be the subgroup of 
$\widehat{C}_l(R^3)$ generated by the generators $(\lan v_0\ran ,\dots, \lan v_l\ran)$, such that
$S\se \{\lan \overline{v}_0 \ran, \dots, \lan \overline{v}_l \ran\}$.
For each $x\in \PP^{2}(k)$, choose $v_x \in R^3$ such that $\lan v_x \ran$ is a
direct summand of $R^3$ and $\lan \overline{v}_x \ran=x$.
For $l \geq 0$, define $s_x:\widehat{C}_l(R^3) \arr \widehat{C}_{l+1}(R^3)$ as 
\[
s_x(\lan v_0 \ran,\dots, \lan v_l\ran)=\begin{cases}
(\lan v_x \ran, \lan v_0 \ran,\dots, \lan v_l\ran)& \text{if 
$x \notin\{\lan\overline{v}_0 \ran,\dots,\lan\overline{v}_l \ran \}$}\\
0 & \text{otherwise.}
\end{cases}
\]
Thus if $(\lan v_0 \ran,\dots, \lan v_l \ran)$ is a generator of $\widehat{C}_l(R^3)$ and 
$x \notin\{\lan \overline{v}_0 \ran, \dots, \lan\overline{v}_l \ran \}$, then
\[
\widehat{\partial}_{l+1}\circ s_x(\lan v_0 \ran,\dots, \lan v_l \ran)
=(\lan v_0 \ran,\dots, \lan v_l\ran)-s_x\circ\widehat{\partial}_l(\lan v_0 \ran,\dots, \lan v_l\ran).
\]
Suppose $x_i \in \PP^{2}(k)$, $0\leq i \leq l$, are disjoints and choose
$v_{x_i} \in R^3$ such that $\lan \overline{v}_{x_i} \ran=x_i$.

Let $z=z'+z'' \in \ker(\widehat{\partial}_{l-1})$, where $z'$ is 
generated by terms that belong to $D_{l-1}(\{x_0\})$ and  $z''$ is 
generated by terms which do not belong to $D_{l-1}(\{x_0\})$. Then 
\begin{align*}
\widehat{\partial}_l\circ s_{x_0}(z)&=\widehat{\partial}_l\circ s_{x_0}(z'')\\
&= z''+s_{x_0}\circ\widehat{\partial}_{l-1}(z'')\\
%=z +s_{x_0}\circ\widehat{\partial}_{l-1}(z'')-z'\\
&=z +s_{x_0}\circ\widehat{\partial}_{l-1}(z)-z'-s_{x_0}\circ\widehat{\partial}_{l-1}(z')\\
&=z -z'-s_{x_0}\circ\widehat{\partial}_{l-1}(z').
\end{align*}
Clearly $-z'-s_{x_0}\circ\widehat{\partial}_{l-1}(z')\in D_{l-1}(\{x_0\})$. Thus
\[
(\widehat{\partial}_l\circ s_{x_0}-\id_{\widehat{C}_{l-1}(R^3)})(z)=z_0,
\]
where $z_0\in D_{l-1}(\{x_0\})$ and clearly $z_0\in \ker(\widehat{\partial}_{l-1})$. 
In a similar way
we have $(\widehat{\partial}_l\circ s_{x_1}-\id_{\widehat{C}_{l-1}(R^3)})(z_0)=z_1$, 
where $z_1\in D_{l-1}(\{x_0,x_1\})$
and $z_1\in \ker(\widehat{\partial}_{l-1})$. 
Repeating this process we get
\[
z_l=(\widehat{\partial}_l\circ s_{x_l}-\id_{\widehat{C}_{l-1}(R^3)})\circ\dots\circ
(\widehat{\partial}_l\circ s_{x_0}-\id_{\widehat{C}_{l-1}(R^3)})(z),
\]
where $z_l\in D_{l-1}(\{x_0,\dots, x_{l}\})=0$ and thus $z_l=0$. From the above formula
we have $\widehat{\partial}_{l-1}(y)+(-1)^{l}z=0$ for some $y$ and therefore 
$\widehat{\partial}_{l}((-1)^{l-1}y)=z$.
The last claim follows from the fact that $|\PP^{2}(k)|\geq |\PP^{2}(\F_2)|=7$.
\end{proof}

Set $\widehat{L}_l:=\widehat{C}_{l-1}(R^3)$ for $l \geq 0$ and consider the complex
\[
\widehat{L}_\bullet: \ \ \ \cdots \arr \widehat{L}_2 \arr \widehat{L}_1 \arr 
\widehat{L}_0 \arr 0.
\]
Then we have the first quadrant spectral sequence
\[
\widehat{E}_{r, s}^1(n)= H_s(\GL_3(R), \widehat{L}_{r}) 
\Rightarrow H_{r+s}(\GL_3(R), \widehat{L}_\bullet).
\]
By Lemmas \ref{R3-exact} and \ref{homology-complex}, for $0 \leq m \leq 6$, we have
$H_m(\GL_3(R), \widehat{L}_\bullet)=0$.
%If the residue field of $R$ is finite, we assume that $|R/\mmm_R|=p^d$. Let $s < (p-1)d-2$. 
%If $R$ is a domain or an algebra over a field, we may only assume that $s <(p-1)d$. 
Moreover, by Proposition \ref{affine},
\[
\widehat{E}_{r, s}^1= \left\{
\begin{array}{ll}
H_s(\rr^{r} \times \GL_{3-r}(R)) & \textrm{if $0 \le r \le 2$}\\
H_s(\GL_3(R), \widehat{C}_{r-1}(R^3)) & \textrm{if $r\ge 3.$}
\end{array}
\right.
\]

\begin{lem}\label{mir-elb2}
{\rm (i)} Let $|R/\mmm_R|\neq 2,3, 4$. Then  the complex
\[
H_2(\rr^2\times
\GL_1(R)) \stackrel{\alpha_\ast-\inc_\ast}{-\!\!\!-\!\!\!-\!\!\!\larr} 
H_2(\rr \times \GL_2(R)) \stackrel{\inc_\ast}{\larr} H_2(\GL_3(R)) \arr 0
\]
is exact, where $\alpha(a,b,c)=(b,a,c)$.
\par {\rm (ii)} Let $|R/\mmm_R|\neq 2, 3, 4, 5, 8, 9, 16, 32$. 
If $R$ is a domain or an algebra over a field, we only may assume that
$|R/\mmm_R|\neq 2, 3, 4, 8$. Then we have the exact sequence
\[
H_3(\rr^2\times \GL_1(R)) \stackrel{\alpha_\ast-\inc_\ast}{-\!\!\!-\!\!\!-\!\!\!\larr} 
H_3(\rr \times \GL_2(R)) \stackrel{\inc_\ast}{\larr} H_3(\GL_3(R)) \arr 0.
\]
%is exact, where $f_2={\alpha_{1,2}}_\ast-{\inc}_\ast$ and $f_1=\inc_\ast$.
\end{lem}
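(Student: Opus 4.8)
The plan is to extract both sequences from the three lowest columns of the spectral sequence $\widehat E_{r,s}^1 \Rightarrow H_{r+s}(\GL_3(R),\widehat L_\bullet)$, whose abutment vanishes for $r+s\le 6$ by Lemmas \ref{R3-exact} and \ref{homology-complex}. First I would compute the two $d^1$-differentials that give the maps in the statement. Since $\widehat\partial_0$ is the augmentation, Shapiro's lemma identifies $d_{1,s}^1$ with the corestriction $\inc_\ast\colon H_s(\rr\times\GL_2(R))\to H_s(\GL_3(R))$. Because $\widehat\partial_1(\lan v_0\ran,\lan v_1\ran)=(\lan v_1\ran)-(\lan v_0\ran)$, the differential $d_{2,s}^1$ is the difference of the two face maps that include $\stabe(\lan e_1\ran,\lan e_2\ran)$ into $\stabe(\lan e_1\ran)$ and into $\stabe(\lan e_2\ran)$; conjugating the second by the permutation interchanging $e_1$ and $e_2$ to return to the standard generator $\lan e_1\ran$, Proposition \ref{affine} turns these into the two inclusions $\rr^2\times\GL_1(R)\to\rr\times\GL_2(R)$ given by $(a,b,c)\mapsto(a,\diag(b,c))$ and $(a,b,c)\mapsto(b,\diag(a,c))$, so with the alternating signs $d_{2,s}^1=\alpha_\ast-\inc_\ast$. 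Thus the row $s=2$ begins with the complex of part (i), and once $\widehat E_{r,3}^1$ is identified the row $s=3$ begins with the complex of part (ii).

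With the differentials in hand, exactness of the three-term complex in the row $s=q$ (with $q=2$ for (i) and $q=3$ for (ii)) is equivalent to the two vanishings $\widehat E_{0,q}^2=0$ and $\widehat E_{1,q}^2=0$, which give surjectivity of $\inc_\ast$ and exactness at the middle term. As the differentials move up and to the left, the edge positions $(0,q)$ and $(1,q)$ carry no higher outgoing differentials and are only reached by the incoming $d^2$ from $(2,q-1)$, $(3,q-1)$ and the incoming $d^3$ from $(3,q-2)$, $(4,q-2)$. Hence it suffices to prove that every $\widehat E_{r,s}^2$ with $s<q$ and $r\ge 2$ vanishes in this range: then these incoming differentials are zero, so $\widehat E_{0,q}^2=\widehat E_{0,q}^\infty$ and $\widehat E_{1,q}^2=\widehat E_{1,q}^\infty$, and the latter vanish because the abutment is zero in total degree $\le 4$. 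I would carry this out by a bottom-up induction on $s$.

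The vanishing of these lower terms is exactly where the hypotheses enter. For $0\le r\le 2$ the identification $\widehat E_{r,s}^1\simeq H_s(\rr^{\,r}\times\GL_{3-r}(R))$ needs $\phi_s$ to be an isomorphism: by Proposition \ref{affine} this holds for $s\le 2$ as soon as $|R/\mmm_R|\neq 2,3,4$, while for $s=3$ the isomorphism $\phi_3$ forces precisely the excluded orders $5,8,9,16,32$ in general, relaxing to $2,3,4,8$ when $R$ is a domain or an algebra over a field. For the columns $r\ge 3$ the term $\widehat E_{r,s}^1=H_s(\GL_3(R),\widehat C_{r-1}(R^3))$ decomposes by Shapiro over the $\GL_3(R)$-orbits of configurations of at least three points, whose stabilizers contain a diagonal torus; the contribution of each orbit is annihilated by the twisted-coefficient vanishing of Proposition \ref{hutchinson4}, which together with the acyclicity of $\widehat C_\bullet(R^3)$ from Lemma \ref{R3-exact} and homology stability (Theorem \ref{stability1}) reduces the $E^2$-terms in the columns $r\ge 2$ of the rows $s=0,1,2$ to zero.

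The main obstacle is part (ii). At $s=3$ the edge term $\widehat E_{0,3}^1=H_3(\GL_3(R))$ is genuinely nontrivial, the Shapiro identifications hold only in the stable range, and the $d^2$- and $d^3$-differentials entering $(0,3)$ and $(1,3)$ must be killed using $H_3$-stability rather than the elementary $H_2$-analysis behind Corollary \ref{vdk--0}. It is precisely the demand that $\phi_3$ be an isomorphism and that $H_3$ stabilize from $\GL_2(R)$ to $\GL_3(R)$ that produces the extra exclusions $5,8,9,16,32$ and explains their collapse to $2,3,4,8$ once $R$ is a domain or an algebra over a field, where Proposition \ref{affine}(ii) and Proposition \ref{hutchinson4}(iii) apply without these restrictions.
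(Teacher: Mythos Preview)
Your approach is exactly the one the paper points to: extract both exact sequences from the spectral sequence $\widehat E_{r,s}^1\Rightarrow H_{r+s}(\GL_3(R),\widehat L_\bullet)$, using Lemma~\ref{R3-exact} and Lemma~\ref{homology-complex} for the vanishing of the abutment, Proposition~\ref{affine} to identify $\widehat E_{r,s}^1$ with $H_s(\rr^{\,r}\times\GL_{3-r}(R))$ for $r\le 2$, and an orbit analysis for $r\ge 3$. This is precisely what the paper means by ``analysis of the above spectral sequence as done in \cite[\S3]{mirzaii-2008}''. Your identification of the differentials $d_{1,s}^1=\inc_\ast$ and $d_{2,s}^1=\alpha_\ast-\inc_\ast$, and your reduction of exactness to $\widehat E_{0,q}^2=\widehat E_{1,q}^2=0$ via vanishing of the sources of incoming higher differentials, are all correct. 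You also correctly trace the residue-field exclusions to exactly where $\phi_2$ and $\phi_3$ are required in Proposition~\ref{affine}.

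One caution about your final paragraph: the phrase ``$H_3$-stability'' and ``$H_3$ stabilize from $\GL_2(R)$ to $\GL_3(R)$'' should be dropped. No such stability is available at this point in the paper (indeed Lemma~\ref{mir-elb2} is an input to the later $H_3$-results, not a consequence of them), and it is not needed. The incoming differentials into $(0,3)$ and $(1,3)$ originate from positions with $s\le 2$, so killing them uses only the $H_{\le 2}$-information already handled by Proposition~\ref{affine}(i) together with the orbit/stabilizer analysis of the columns $r\ge 3$; the sole $H_3$-level ingredient is the identification $\phi_3$ of the $E^1$-terms at $r=1,2$, which you state correctly earlier. With that terminological slip removed, your outline matches the paper's intended argument.
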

\begin{proof} 
This can be proved as \cite[Corollary 3.5]{mirzaii-2008}. In fact the claims follow
from analysis of the above spectral sequence as done in \cite[\S~3]{mirzaii-2008}.
%Here we need to use Proposition \ref{affine}.
\end{proof}

\begin{prp}\label{mir-stability}
Let $R$ be a local ring.
%Let $|R/\mmm_R|\neq 2,3,4$. Then
\par {\rm (i)} If $|R/\mmm_R|\neq 2,3,4$, then we have the homology stability
\[
H_2(\GL_2(R)) \overset{\simeq}{\larr} H_2(\GL_3(R)) \overset{\simeq}{\larr} H_2(\GL_4(R)) 
\overset{\simeq}{\larr} \cdots.
\]
\par {\rm (ii)} If $|R/\mmm_R|\neq 2,3,4$, then $K_2^M(R)\simeq K_2(R)$ and the natural
 map $K_2^{M}(R) \arr H_2(\GL_2(R))$ 
is given by
$\{a,b\}\mt {\bf c}\bigg({\mtx a 0 0 1}, {\mtx b 0 0 {b^{-1}}}\bigg)$.
\par {\rm (iii)} {\rm (Van der Kallen)} If $|R/\mmm_R| > 5$, then 
\[
K_2(R)\simeq K_2^M(R)\simeq (\rr \otimes \rr)/\lan a \otimes(1-a): a, 1-a \in \rr\ran.
\]
\end{prp}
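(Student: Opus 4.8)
For part (i), I would first observe that Theorem~\ref{stability1}(i) already delivers the surjection $H_2(\GL_2(R))\two H_2(\GL_3(R))$ together with all the higher isomorphisms, so the entire assertion reduces to showing this first map is injective. The plan is to exploit the presentation of $H_2(\GL_3(R))$ given by Lemma~\ref{mir-elb2}(i), which identifies $H_2(\GL_3(R))$ with $\coker(\alpha_\ast-\inc_\ast)$ for $\alpha_\ast-\inc_\ast\colon H_2(\rr^2\times\GL_1(R))\arr H_2(\rr\times\GL_2(R))$. Here $\rr\times\GL_2(R)=\diag(\rr,\GL_2(R))\se\GL_3(R)$, and the stability map factors as
\[
H_2(\GL_2(R))\overset{j}{\larr}H_2(\rr\times\GL_2(R))\overset{\inc_\ast}{\larr}H_2(\GL_3(R)),
\]
where $j$ is induced by $g\mapsto(1,g)$. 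Since $|R/\mmm_R|\geq 5$ forces $H_1(\SL_2(R))=0$, the determinant gives $H_1(\GL_2(R))\simeq\rr$, and the K\"unneth theorem produces a canonical splitting (with no $\tors$-terms in this degree)
\[
H_2(\rr\times\GL_2(R))\simeq H_2(\rr)\oplus(\rr\otimes\rr)\oplus H_2(\GL_2(R)),
\]
in which $j$ is exactly the inclusion of the last summand. As $j$ is injective, injectivity of the stability map is equivalent to $\im(\alpha_\ast-\inc_\ast)\cap H_2(\GL_2(R))=0$ inside this decomposition.

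The technical core is then a bookkeeping of $\alpha_\ast-\inc_\ast$ on the K\"unneth summands of $H_2(\rr_1\times\rr_2\times\rr_3)$: recording, summand by summand, into which of $H_2(\rr)$, $\rr\otimes\rr$, $H_2(\GL_2(R))$ each piece is sent by $\inc$ (first factor to the $\rr$-factor, the last two diagonally into $\GL_2(R)$) and by $\alpha$ (which exchanges the roles of the first two factors). I expect to find that an element of $\im(\alpha_\ast-\inc_\ast)$ can lie in the summand $H_2(\GL_2(R))$ only when its $H_2(\rr)$- and $(\rr\otimes\rr)$-components vanish, and that these two constraints force the residual $H_2(\GL_2(R))$-component to be a sum of terms of the shape ${\bf c}(\diag(x,1),\diag(1,y))+{\bf c}(\diag(y,1),\diag(1,x))$. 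Conjugation by $w=\mtx{0}{1}{1}{0}$ interchanges $\diag(x,1)\leftrightarrow\diag(1,x)$ and acts trivially on homology, so ${\bf c}(\diag(x,1),\diag(1,y))=-{\bf c}(\diag(y,1),\diag(1,x))$ and every such term is $0$; hence $\im(\alpha_\ast-\inc_\ast)\cap H_2(\GL_2(R))=0$, proving (i). This intersection computation, together with the $w$-symmetry that kills the residual terms, is the one genuinely delicate step.

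For part (ii), (i) and stability identify $H_2(\GL_2(R))\simeq H_2(\GL(R))$. From $1\arr\SL(R)\arr\GL(R)\overset{\det}{\arr}\rr\arr 1$ (using $\SK_1(R)=0$ so $\SL(R)=E(R)$, and $H_1(\SL(R))=0$), the Lyndon--Hochschild--Serre spectral sequence splits off $H_2(\GL(R))\simeq H_2(\rr)\oplus K_2(R)$, since $H_2(\SL(R))=K_2(R)$ carries the trivial $\rr$-action. Comparing with $H_2(\GL_2(R))\simeq H_2(\GL_1(R))\oplus H_2(\SL_2(R))_\rr$ from Corollary~\ref{vdk--0} yields $K_2(R)\simeq H_2(\SL_2(R))_\rr$. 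Now the hypothesis $|R/\mmm_R|\neq 2,3,4$ guarantees $2<(p-1)d$ in the finite case (and imposes no condition otherwise), so $H_2(R)_\rr=0$ by Proposition~\ref{hutchinson4}(i); thus Corollary~\ref{vdk--0}(ii) collapses to $H_2(\SL_2(R))_\rr\simeq(\rr\otimes\rr)_\sigma/\lan a\otimes(1-a)\ran$, which is $K_2^{MS}(R)\simeq K_2^M(R)$ by Lemma~\ref{vdk00}(i). Chaining these isomorphisms gives $K_2^M(R)\simeq K_2(R)$, and the explicit formula $\{a,b\}\mapsto{\bf c}(\diag(a,1),\diag(b,b^{-1}))$ is read off by tracing the generator $a\otimes b$ through the Pontryagin-product descriptions recorded in Lemma~\ref{d3} and Corollary~\ref{vdk--0}.

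Finally, (iii) is immediate: for $|R/\mmm_R|>5$, Lemma~\ref{vdk00}(ii) rewrites $K_2^M(R)$ as $(\rr\otimes\rr)/\lan a\otimes(1-a)\ran$, so the isomorphism $K_2(R)\simeq K_2^M(R)$ of (ii) furnishes the stated description. The only step demanding real work is the injectivity in (i); everything else is assembly of results already established in the excerpt.
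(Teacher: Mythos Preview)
Your proposal is correct and follows essentially the same route as the paper: for (i) you reduce to injectivity via Theorem~\ref{stability1}, use the exact sequence of Lemma~\ref{mir-elb2}(i) and the K\"unneth decomposition to identify the possible kernel elements, and kill them with the $w$-conjugation symmetry (the paper does the identical computation, writing the residual term as $-{\bf c}(\diag(b,1),\diag(1,a))-{\bf c}(\diag(a,1),\diag(1,b))=0$); parts (ii) and (iii) use the same ingredients (Proposition~\ref{hutchinson4}, Corollary~\ref{vdk--0}, Lemma~\ref{vdk00}) in a slightly different order but to the same effect.
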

\begin{proof}
(i) By Proposition~\ref{stability1}, we must prove that the map $H_2(\GL_2(R)) \arr H_2(\GL_3(R))$
is injective. By Lemma \ref{mir-elb2}, we have the exact sequence
\[
H_2(\rr^2\times \GL_1(R)) \stackrel{\alpha_\ast-\inc_\ast}{-\!\!\!-\!\!\!-\!\!\!\larr} 
H_2(\rr \times \GL_2(R)) \stackrel{\inc_\ast}{\larr} H_2(\GL_3(R)) \arr 0
\]
%where $\alpha: \rr^2\times \GL_1(R)) \arr \rr \times \GL_2(R)$ is given by
%$\diag(a,b,c)\mt \diag(b,a,c)$. 
and by the Universal Coefficient Theorem,
\begin{align*}
H_2(\rr \times \GL_2(R))&\simeq H_2(\rr)\oplus H_2(\GL_2(R))\oplus \rr\otimes H_1(\GL_2(R)),\\
H_2(\rr^2\times \GL_1(R))& \simeq H_2(\rr)\oplus H_2(\rr)\oplus H_2(\GL_1(R)) 
\oplus \rr \otimes \rr \\
& \oplus \rr \otimes H_1(\GL_1(R)) \oplus \rr \otimes H_1(\GL_1(R))
\end{align*}
If $x\in \ker\Big(H_2(\GL_2(R)) \arr H_2(\GL_3(R))\Big)$, then 
\[
(0,x,0) \in \ker\Big(\inc_\ast:H_2(\rr \times \GL_2(R)) \arr H_2(\GL_3(R))\Big).
\]
Thus there is
\[
z=({\bf c}(a_1,b_1), {\bf c}(a_2,b_2), x', a\otimes b, c\otimes d, e\otimes f)
\in H_2(\rr^2\times \GL_1(R))
\]
such that $({\alpha}_\ast-\inc_\ast)(z)=(0,x,0)$ and so
\begin{align*}
0=&-{\bf c}(a_1,b_1)+{\bf c}(a_2,b_2),\\
x=&+{\bf c}(\diag(a_1,1),\diag(b_1,1))-{\bf c}(\diag(a_2,1),\diag(b_2,1))\\
&+{\bf c}(\diag(c,1),\diag(1, d))-{\bf c}(\diag(e,1),\diag(1,f)),\\
0=&-b\otimes a-a\otimes b-c\otimes d+e\otimes f.
\end{align*}
All these imply that 
\[
x=-{\bf c}(\diag(b,1),\diag(1, a))-{\bf c}(\diag(a,1),\diag(1,b))=0.
\] 
\par (ii) 
Since $|R/\mmm_R|\geq 5$, Proposition~\ref{hutchinson4} implies that 
$H_2(R)_\rr=0$. Thus by Corollary~\ref{vdk--0} and Lemma~\ref{vdk00} we have
\[
H_2(\GL_2(R))\simeq H_2(\GL_1(R))\oplus K_2^{M}(R).
\]
Now by the homology stability result from (i), we have
\begin{align*}
K_2^{M}(R)& \simeq H_2(\GL_2(R))/H_2(\GL_1(R))\\
& \simeq H_2(\GL(R))/H_2(\GL_1(R))\\
& \simeq H_2(\SL(R))\\
&= K_2(R).
\end{align*}
Moreover, clearly $K_2^{M}(R) \arr H_2(\GL_2(R))$ coincides with the composition
\[
K_2^{M}(R) \arr H_2(T_2) \arr H_2(\GL_2(R))
\]
and by the proof of Corollary \ref{vdk--0}, this is given by
\begin{align*}
 \{a,b\} & \mt {\bf c}\bigg({\mtx a 0 0 1},{\mtx b 0 0 1}\bigg)-
{\bf c}\bigg({\mtx a 0 0 1},{\mtx 1 0 0 b}\bigg)\\
& ={\bf c}\bigg({\mtx a 0 0 1},{\mtx b 0 0 {b^{-1}}}\bigg).
\end{align*}
\par (iii) The last claim follows from (ii) and Lemma \ref{vdk00}.
\end{proof}

%\begin{rem}
%We are not sure, but it seems that the above proposition for local rings with 
%$|R/\mmm_R|=5$ is new!
%\end{rem}

\begin{exa}
Since $H_2(\GL_2(\F_4))\simeq \z/2$ (see Example \ref{sln-finite-fields}) and
\[
H_2(\GL(\F_4))\simeq H_2(\GL_1(\F_4)) \oplus K_2(\F_4)=0,
\]   
the natural map $H_2(\GL_2(\F_4))\arr H_2(\GL(\F_4))$ 
is surjective but not injective. But $K_2^M(\F_4)=K_2(\F_4)=0$.
\end{exa}

\begin{rem}
(i) Dennis has proved that for any local ring $R$ the natural map $K_2^M(R)\arr K_2(R)$
is surjective \cite[Theorem~2.5]{stein1973}. His result also is true for any semilocal 
ring which has at most one residue field with two elements.
\par (ii) It is a result of Van der Kallen that $K_2^M(R)\arr K_2(R)$ is an isomorphism 
for any local ring such that its residue field has more than five elements. In fact,
he proved this for a larger class of rings called $5$-fold stable rings 
\cite[5.2]{vdkallen1977}. 
%The above proof, in the case of local rings, is different than 
%Van der Kallen's original proof and is in the line of \cite{hutchinson1990}, \cite{suslin1991}
%and \cite{mirzaii2011}.
%\par (iii) In Proposition \ref{mir-stability} below, we will show that $K_2^M(R)\simeq K_2(R)$
%even for any local rings such that its residue field has more than four elements. 
%%Our arguments
%%also works for any semilocal ring, where its residue fields have more than four elements. 
%%Apparently this is a new results, but we are not sure about it.
%\par (iii) If $R=\F_2[[x]]$, then $K_2^M(R)\arr K_2(R)$ is surjective but is not injective
%\cite[A1.4]{kahn1993}.
\end{rem}

%%%%%%%%%%%%%%%%%%%%%%%%%%%%%%%%%%%%%%%%%%%%%%%%%%%%%%%%%%%%%%%%%%%%%%%%%%%%%%%%%%%%%%%%%
\section{Stability for the third homology group and the third {\it K}-group}\label{H3}
%%%%%%%%%%%%%%%%%%%%%%%%%%%%%%%%%%%%%%%%%%%%%%%%%%%%%%%%%%%%%%%%%%%%%%%%%%%%%%%%%%%%%%%%%

To prove the Bloch-Wigner exact sequence we will need to show that the map
$H_3(\GL_3(R)) \arr H_3(\GL_{4}(R))$ is an isomorphism. We prove this when
the residue field of $R$ has sufficient elements.

Let $S=\{w_1, \dots ,w_m\}$ be a subset of $R^n$. We say that $(v_1, \dots ,v_l)$ 
is in general position with $S$ if for all $a_i \in R$, $i=1, \dots, l+m$ and only 
$n$ many of them non-vanishing, 
\[
a_1v_1 + \cdots + a_l v_l + a_{l+1} w_1 + \cdots + a_{l+m} w_m=0
\]
implies that $a_i=0$ for all $i=1, \dots, l$.

\begin{dfn}
Let $c_n(R)$ be the minimum of the numbers $|S|$ where $S$ is a finite subset 
of $R^n$ such that there is no further element in $R^n$ which is in general 
position to $S$.
\end{dfn}

\begin{rem}
We believe that $c_n(R)$ is equal to the largest natural number such that 
there is a finite subset $T$ of $R^n$ such that the element of $T$ are in general position. 
This can be seen by direct calculations for local rings with small residue fields.
\end{rem}

\begin{exa}\label{cnR}
If $v_1, \dots , v_n, v_{n+1}, \dots, v_l\in R^n$ are in general position, then by 
multiplying the above vectors with a suitable invertible matrix we may assume that 
$v_1=e_1, \dots, v_n=e_n$. Thus $v_{n+1}=a_1e_1+\dots+a_ne_n$, where for any $i$, 
$a_i\neq 0$. Replacing $e_i$ with $a_ie_i$ for $1\leq i \leq n$ 
and multiplying them with a suitable invertible matrix,
we may assume that $v_1=e_1, \dots, v_n=e_n, v_{n+1}=e_1+\dots+e_n$. Moreover, in a 
similar way we may assume that $v_{n+j}=e_1+a_{2,j}e_2+\dots+a_{n,j}e_n$ for $2\leq j\leq l-n$.
\par (i) The elements of the set $\{e_1,e_2, e_1+e_2,e_1+a_ie_2| \ i=1, \dots,m\}\se R^2$ 
are in general position if and only if $a_i,1-a_i, a_i-a_j\in \rr$, where $i\neq j$. Thus
\[
c_2(R)=|\PP^1(R/\mmm_R)|=|R/\mmm_R|+1.
\]
\par (ii) The above argument shows that  
$c_n(R)\geq n+1$. If $|R/\mmm_R|\leq n$, then clearly $c_n(R)=n+1$.

\par (iii) If $|R/\mmm_R|=\infty$, then it is not difficult to show that for any finite
set $S$ of elements in general position of $R^n$, one can always find many vectors that are 
in general position with $S$. Therefore $c_n(R)=\infty$.

\par(iv) The elements of the set
\[
\{ e_1, e_2, e_3, e_1+e_2+e_3, e_1+a_ie_2+b_ie_3 \mid i=1, \dots, m \} \se R^3
\]
are in general position if and only if 
\[
a_i,1- a_i, b_i, 1- b_i, a_i- a_j, b_i- b_j,a_i- b_i,
x_{ij}:=a_ib_j-a_jb_i,x_{ij}-x_{ik}+x_{jk} \in R^\times,
\]
%, $i\neq j$, and $x_{i,j}-x_{i,k}+x_{j,k} \in F^\times$,
where $i,j,k$ are distinct. By direct computations we get 
\begin{align*}
c_3(R)&=\begin{cases}
4& \text{if $|R/\mmm_R|=2, 3$}\\
6& \text{if $|R/\mmm_R|=4, 5$}\\
8& \text{if $|R/\mmm_R|=7, 8, 9$}
\end{cases}
\\
&%& \text{and} 
\\
c_3(R)&\geq 9 \ \ \ \ \ \ \text{if $|R/\mmm_R| > 9$}. 
\end{align*}
\par (v) By direct computations one can show that
\begin{align*}
c_4(R)&=\begin{cases}
5& \text{if $|R/\mmm_R|=2, 3, 4$}\\
8& \text{if $|R/\mmm_R|=5$}
\end{cases}
\\
&%& \text{and} 
\\
c_4(R)&\geq 9 \ \ \ \ \ \ \text{if $|R/\mmm_R| \geq 7$}. 
\end{align*}
Note that in $\F_7^4$, the elements of the set $\{e_1,e_2,e_3,e_4,e_1+e_2+e_3+e_4,
e_1+2e_2+3e_3+4e_4, e_1+5e_2+6e_3+2e_4, e_1+3e_2+4e_3+5e_4, e_1+6e_2+2e_3+3e_4\}$
are in general position.
\end{exa}

For a non-negative integer $l$, 
let $\CC_l(R^n)$ be the free abelian group with a basis consisting of 
$(l+1)$-tuples $(v_0, \dots, v_l)$, where $v_i\in R^n-\{0\}$ and for any $k\leq l+1$,
the submodule of $R^n$ generated by any $k$ elements of the set $\{v_0, \dots, v_l\}$
is a free summand of $R^n$. We define the differential operators
$\tilde{\partial}_l: \CC_l(R^n) \arr \CC_{l-1}(R^n)$, $l\ge 0$,
in the usual way and consider $\CC_l(R^n)$ as a left $\GL_n(R)$-module in a natural way. 
Note that $\CC_{-1}(R^n):=\z$. So we have the complex of $\GL_n(R)$-modules:
\begin{align*}
\CC_\bullet(R^n):& \ \ \ 
\cdots  \arr \CC_{2}(R^n) \arr \CC_{1}(R^n)  \arr \CC_0(R^n) \arr \CC_{-1}(R^n) \arr 0.
\end{align*}
For a finite subset $S$ of $R^n$, let $\CC_\bullet(R^n, S)$ be the subcomplex of
$\CC_\bullet(R^n)$, where $\CC_l(R^n, S)$ is the free abelian group
generated by terms $(v_0, \dots, v_l)$ which are in general position with 
$S$. Note that $\CC_{-1}(R^n, S)=\z$ and $\CC_\bullet(R^n, \emptyset)=\CC_\bullet(R^n)$.
%It is easy to see that the sequence
%$\CC_1(R^n) \overset{\tilde{\partial}_1}{\larr}
%\CC_0(R^n) \overset{\tilde{\epsilon}}{\larr} \z \arr 0$
%is exact. In fact we can prove more. 

\begin{lem}\label{sah-guin}
For a finite subset $S$ of $R^n$, the complex $\CC_\bullet(R^n,S)$ is exact for 
$-1 \leq  m \leq (c_n(R)-|S|-3)/2$. In particular, $\CC_\bullet(R^n)$ is exact 
for $-1 \leq  m \leq (c_n(R)-3)/2$. If $R$ is a field, then $\CC_\bullet(R^n)$ 
is exact.
\end{lem}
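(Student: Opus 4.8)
The plan is to read $\CC_\bullet(R^n,S)$ as the augmented chain complex attached to the family of finite sequences of vectors in general position with $S$, and to deduce its acyclicity in the stated range from the combinatorial ``extendability'' encoded in $c_n(R)$. The one fact about $c_n(R)$ that drives everything is immediate from its definition: any family $T\se R^n$ with $|T|<c_n(R)$ is non-blocking, so there is a vector of $R^n$ in general position with $T$. In particular, if $(v_0,\dots,v_l)$ is a generator of $\CC_l(R^n,S)$ with $l+1+|S|<c_n(R)$, then $\{v_0,\dots,v_l\}\cup S$ admits a vector $v$ in general position with it, and hence $(v,v_0,\dots,v_l)$ is again a generator of $\CC_{l+1}(R^n,S)$. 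This is the precise replacement, in the general position setting, for the ``choose a new projective point'' step used in the proof of Lemma~\ref{R3-exact}.

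Granting this, I would run the Maazen--van der Kallen acyclicity argument, i.e. the general position (Sah--Guin) analogue of the computation behind Lemma~\ref{suslin-vdkallen} from \cite{vdkallen1980}. For a fixed vector $u$ in general position with $S$ one has the cone operator $s_u$ inserting $u$ in front; it is defined on exactly those generators that remain in general position after adjoining $u$, that is on the subcomplex $\CC_\bullet(R^n,S\cup\{u\})\se\CC_\bullet(R^n,S)$, where it satisfies $\tilde\partial\, s_u+s_u\,\tilde\partial=\id$. The induction is on the degree $m$, carried out uniformly in $S$: given a cycle $z\in\CC_m(R^n,S)$, one uses the extendability step to produce cone vertices and a filling $w$ with $\tilde\partial w=z$, trading one unit of degree for one enlargement of $S$ by a cone vertex. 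Since the link of $u$ in $\CC_\bullet(R^n,S)$ is $\CC_{\bullet-1}(R^n,S\cup\{u\})$, each step replaces the pair $(m,|S|)$ by $(m-1,|S|+1)$, so the controlling quantity $2m+|S|$ drops by one at every step and stays below $c_n(R)-3$ throughout. This is exactly the bookkeeping that converts the extendability threshold $c_n(R)$ into the range $m\le (c_n(R)-|S|-3)/2$, and it is the origin of the factor $2$.

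The two ``in particular'' assertions are specializations. Taking $S=\emptyset$ gives exactness of $\CC_\bullet(R^n)$ for $-1\le m\le (c_n(R)-3)/2$. If $R$ is a field then $c_n(R)=\infty$ by Example~\ref{cnR}(iii), so the range becomes all $m\ge -1$ and $\CC_\bullet(R^n)$ is exact; here the extendability step simply never fails, since a field admits arbitrarily many vectors in general position. Throughout, the identification of the homology of the truncated complex with what one wants is routine from the definition of the differentials, in the spirit of Lemma~\ref{homology-complex}.

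The main obstacle, and the step I would spend the most care on, is making the cone construction respect general position \emph{simultaneously} across all generators occurring in a given cycle. A single global cone vertex would have to be in general position with every vector appearing in $z$, which is far too strong; the resolution is the local, inductive form of the argument, in which cone vertices are chosen against only the finitely many vectors ``in play'' at each stage, consistency of the resulting chain being enforced by the inductive hypothesis applied to links. Getting the accounting of these in-play vectors sharp — so that their number never reaches $c_n(R)$ precisely when $m\le (c_n(R)-|S|-3)/2$ — is the heart of the matter; everything else is the formal homological algebra already used for Lemmas~\ref{hutchinson1} and \ref{R3-exact}.
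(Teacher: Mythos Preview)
Your plan is correct and matches the paper's approach (which explicitly follows Kerz \cite{kerz2005}); the paper implements your ``local, inductive'' cone step via an index $I(z)$ that, for a fixed cone vertex $v$ in general position with $S$, records how long an initial segment of each term of $z$ is already in general position with $S\cup\{v\}$ together with the remaining entries, and then increases $I(z)$ by applying the inductive hypothesis at degree $I(z)$ with the suffix entries (and $v$) adjoined to $S$ --- exactly the trade-off you describe, with the same controlling quantity. One small difference: for a field the paper does not invoke $c_n(F)=\infty$ but gives a one-line direct argument, since the defining condition on $\CC_l(F^n)$ is vacuous over a field and hence \emph{any} nonzero vector cones off \emph{any} cycle globally, with no induction or bookkeeping needed.
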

\begin{proof}
%The first part can be proved as the proof of \cite[Theorem~3]{kerz2005}. 
The first part can be proved similar to \cite[Theorem~3]{kerz2005}. 
The proof is by induction on $m$. If $m=-1$, then
from $-1\leq (c_n(R)-|S|-3)/2$, we obtain $|S|<c_n(R)$.
So there is $v\in R^n$ in general position with $S$. Thus $\tilde{\epsilon}(n(v))=n$.
This shows that $\CC_0(R^n,S)\arr \CC_{-1}(R^n, S)=\z$ is surjective.
Now let $m=0$. Then $|S|+2 <c_n(R)$. Let $z \in \ker(\tilde{\epsilon})$. We may assume that
$z=(v_0)-(v_0')$. Set $S':=S\cup \{v_0,v_0'\}$. Since $|S'|=|S|+2 <c_n(R)$, there is a $w\in R^n$
which is in general position with $S'$. Thus $y:=(w,v_0)-(w,v_0')\in \CC_1(R^n,S)$ and 
$\tilde{\partial}_1(y)=x$. 

Now let the claim is true for all numbers $<m$.
Fix a nonzero vector $v\in R^n$ which is in general position with $S$.

For $z=\sum_jl_j(v_{0,j},\dots, v_{m,j}) \in \CC_m(R^n,S)$, let $I(z)$, $-1\leq I(z)\leq m$, be 
the greatest natural number such that for any $j$, $(v_{0,j},\dots, v_{I(z),j})$
is in general position with $S_j:=S\cup \{v\}\cup \{v_{I(z)+1,j},\dots, v_{m,j}\}$.

Now suppose that $z \in \ker(\tilde{\partial}_m)$. We want to show that 
$z\in \im(\tilde{\partial}_{m+1})$.
First we show that we may assume that $I(z)\geq 0$. Let $I(z)=-1$ and consider the set 
$S_j:=S\cup\{v\}\cup \{v_{0,j},\dots, v_{m,j}\}$. Since $|S_j|=|S|+m+2<c_n(R)$
for any $j$, there is $w_j\in R^n$
which is in general position with $S_j$. Now if 
\[
z_1:=z-\tilde{\partial}_{m+1}\Big(\sum_jl_j(w_j,v_{0,j},\dots, v_{m,j})\Big),
\]
then $\tilde{\partial}_m(z_1)=0$ and $I(z_1)\geq 0$. Thus we may assume that $I(z)\geq 0$. 
Moreover, we want to show that we may assume that $I(z)=m$. So let $-1< I(z) <m$ and write $z=z'+z''$, 
where $z'$ contains those terms $y_j:=(v_{0,j},\dots, v_{m,j})$ of $z$
such that $I(y_j)=I(z)$ and $z''$ contains those terms $y_j$ of $z$ such that $I(y_j)>I(z)$.
We may write 
\[
z=\sum_k s_k x_k +z''
\]
such that $s_k\in \CC_{I(z)}(R^n)$ and 
$x_k \in \CC_{m-I(z)-1}(R^n)$. Note that the terms $x_k$ are disjoint. Thus
\[
\sum_k \tilde{\partial}_{I(z)}(s_k)x_k+(-1)^{I(z)}s_k\tilde{\partial}_{m-I(z)-1}(x_k)+
\tilde{\partial}_{m}(z'')=0.
\]
We show that the only terms $y_k'$ in the left hand side of the above formula with $I(y_k')<I(z)$ are 
the terms of $\sum_k \tilde{\partial}_{I(z)}(s_k)x_k$. Clearly the terms of
$\tilde{\partial}_{m}(z'')$ and $s_k\tilde{\partial}_{m-I(z)-1}(x_k)$ do not 
have this property. Now assume that 
\[
y_k':=(v_{0,k}, \dots ,\widehat{v_{i,k}},\dots, v_{I(z),k},v_{I(z)+1,k},\dots, v_{m,k}), 
\]
$0\leq i\leq I(z)$, is a term of $\tilde{\partial}_{I(z)}(s_k)x_k$.
If $I(y_k')=I(z)$, then the vector $v_{I(z)+1,k}$ is in general position with
$S\cup \{v\}\cup \{v_{I(z)+2,k},\dots, v_{m,k}\}$. But 
$(v_{0,k},\dots, v_{I(z),k})$ is in general position with 
$S\cup \{v\}\cup \{v_{I(z)+1,k},\dots, v_{m,k}\}$. This implies that 
$(v_{0,k}, \dots, v_{I(z),k}, v_{I(z)+1,k})$
should be in general position with the set $S \cup \{v\}\cup \{v_{I(z)+2,k},\dots, v_{m,k}\}$,
which is not possible, because $(v_{0,k},\dots, v_{m,k})$ is a term of $z'$.

Since $x_k$'s are disjoint, we have $\tilde{\partial}_{I(z)}(s_k)=0$. If 
$x_k\!=\!(w_{1,k},\dots, w_{m-I(z),k})$, then clearly
the terms of $s_k$ are in general position with the set
$S_k''':=S\cup \{v\} \cup\{w_{1,k},\dots, w_{m-I(z),k}\}$. Thus
$s_k\in \CC_{I(z)}(R^n,S_k''')$. Moreover, we have
\[
I(z)\leq \frac{c_n(R)-(|S|+m-I(z)+1)-3}{2}.
\]
So by induction, there is $s_k'\in \CC_{I(z)+1}(R^n,S_k''') $ such that 
$\tilde{\partial}_{I(z)+1}(s_k')=s_k$, for any $k$. Now if we put 
$z_1:=z-\sum_k\tilde{\partial}_{I(z)+1}(s_k'x_k)$, 
then $z_1\in \ker(\tilde{\partial}_{m})$ and 
\[
I(z_1)=I\bigg(z''+(-1)^{I(z)+1}\sum_ks_k'\tilde{\partial}_{m-I(z)-1}(x_k)\bigg)>I(z).
\]
By continuing this process we may assume that $I(z)=m$. This means that the vector $v$ is in
general position with the set $S\cup \{v_{0,k},\dots, v_{m,k}\}$ for all $k$. Thus
if $Z:=\sum_kl_k(v, v_{0,k},\dots, v_{m,k})$, then $Z\in \CC_{m+1}(R^n,S)$ and we have
$\tilde{\partial}_{m+1}(Z)=z$.

If $R=F$ is a field, then the proof of the claim is very easy. Let 
$z_1=\sum_jl_j(v_{0,j},\dots, v_{m,j}) \in \CC_m(F^n)$ and let $v$ be any nonzero
vector of $F^n$. Then $Z_1:=\sum_jl_j(v, v_{0,j},\dots, v_{m,j})\in \CC_{m+1}(F^n)$.
Now if $z_1 \in \ker(\tilde{\partial}_m)$, then $\tilde{\partial}_{m+1}(Z_1)=z_1$.
\end{proof}

\begin{rem}
One can show that in general $\CC_\bullet(R^n)$ is exact for $-1 \leq  m \leq n-2$.
But we need the exactness of this complex for $m\leq n-1$ (at least when $n=3$).
As we have seen in the above lemma this is easy when $R$ is a filed.
This is probably true for any local ring, but I do not know how to resolve this problem 
without considering $c_n(R)$. Once this is done, we may remove the condition
about $c_{n+1}(R)$ from the following theorem.
\end{rem}

\begin{thm}\label{sah-stability}
Let $R$ be a local ring. If $R/\mmm_R$ is finite we assume that $|R/\mmm_R|=p^d$.
\par {\rm (i)} If $R$ is a domain or an algebra over a field and 
$n \leq \min\{(p-1)d-1,(c_{n+1}(R)-3)/2\}$, then
\[
H_n(\GL_n(R)) \overset{\simeq}{\larr} H_n(\GL_{n+1}(R)) \overset{\simeq}{\larr} 
H_n(\GL_{n+2}(R)) \overset{\simeq}{\larr}\cdots.
\]
If $R$ is a field we may only assume that $n \leq (p-1)d-1$.
\par {\rm (ii) } If $n \leq \min\{(p-1)d-3, (c_{n+1}(R)-3)/2\}$, then
\[
H_n(\GL_n(R)) \overset{\simeq}{\larr} H_n(\GL_{n+1}(R)) \overset{\simeq}{\larr} 
H_n(\GL_{n+2}(R)) \overset{\simeq}{\larr}\cdots.
\]
\end{thm}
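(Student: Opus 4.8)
The plan is to reduce the theorem to a single injectivity statement and then extract it from the spectral sequence attached to the complex $\CC_\bullet(R^{n+1})$, whose sharper acyclicity range (Lemma~\ref{sah-guin}) is exactly what distinguishes the present bounds from those of Theorem~\ref{stability1}. First I would note that in both (i) and (ii) the hypotheses force $n<(p-1)d$ (respectively $n<(p-1)d-2$), so Theorem~\ref{stability1} already gives
\[
H_n(\GL_n(R)) \two H_n(\GL_{n+1}(R)) \overset{\simeq}{\larr} H_n(\GL_{n+2}(R)) \overset{\simeq}{\larr}\cdots
\]
with the leftmost arrow surjective. Hence it suffices to prove that the edge map $H_n(\inc)\colon H_n(\GL_n(R)) \arr H_n(\GL_{n+1}(R))$ is \emph{injective}; the whole chain of isomorphisms then follows. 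I would prove this by induction on $n$, the cases $n\leq 2$ being covered by Proposition~\ref{mir-stability}(i) and $n\leq 1$ being trivial.

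For the inductive step, set $\widetilde{L}_p:=\CC_{p-1}(R^{n+1})$ for $p\geq 0$, so that $\widetilde{L}_0=\CC_{-1}(R^{n+1})=\z$, and consider the first quadrant spectral sequence $E_{p,q}^1=H_q(\GL_{n+1}(R),\CC_{p-1}(R^{n+1}))\Rightarrow H_{p+q}(\GL_{n+1}(R),\widetilde{L}_\bullet)$. Since $\GL_{n+1}(R)$ acts transitively on the generators of $\CC_{p-1}(R^{n+1})$ for $p\leq n+1$, with $(e_1,\dots,e_p)$ having stabilizer ${\mtx {I_p} {M_{p,n+1-p}(R)} {0} {\GL_{n+1-p}(R)}}$, Shapiro's lemma together with Proposition~\ref{affine} identifies $E_{p,q}^1\simeq H_q(\GL_{n+1-p}(R))$ for all $q\leq n$; this is where the affine isomorphism range $q<(p-1)d$ (domain or algebra case) or $q<(p-1)d-2$ (general case) enters, and accounts for the bounds $(p-1)d-1$ and $(p-1)d-3$. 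As in the $C_\bullet'(R^n)$ argument preceding Theorem~\ref{stability1}, the first differential is the alternating sum of face maps, so $d_{p,q}^1=H_q(\inc)$ when $p$ is odd and $d_{p,q}^1=0$ when $p$ is even. Finally, the hypothesis $n\leq (c_{n+1}(R)-3)/2$ makes $\CC_\bullet(R^{n+1})$ exact in degrees $-1\leq m\leq n$ by Lemma~\ref{sah-guin}, so by Lemma~\ref{homology-complex} the abutment $H_m(\GL_{n+1}(R),\widetilde{L}_\bullet)$ vanishes for $0\leq m\leq n+1$; over a field $c_{n+1}(R)=\infty$ and the complex is fully exact, which is why fields need no constraint beyond $(p-1)d-1$.

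With this in hand the computation concentrates on the corner $E_{1,n}^2$. Because $d_{2,n}^1=0$, one has $E_{1,n}^2=\ker\big(H_n(\inc)\colon H_n(\GL_n(R)) \arr H_n(\GL_{n+1}(R))\big)$, precisely the kernel I must kill. The inductive hypothesis forces $E_{p,q}^2=0$ for all $(p,q)$ with $p\geq 1$ and $p+q\leq n$: the odd columns give kernels and the even columns cokernels of stabilization maps lying inside the stable range, all of which vanish. Consequently the degree-$n$ abutment recovers the surjectivity of the edge map, and the only remaining content is the vanishing of $E_{1,n}^2$. Since the abutment vanishes in degree $n+1$ we get $E_{1,n}^\infty=0$; because no differentials leave $(1,n)$, this says $E_{1,n}^2=\sum_{r\geq 2}\im\big(d^r_{1+r,\,n-r+1}\big)$, so the task becomes showing that every differential entering $(1,n)$ vanishes — exactly the Nesterenko--Suslin bookkeeping of \cite[pp.~127--128]{nes-suslin1990}, now run one degree higher thanks to the sharper acyclicity of $\CC_\bullet$.

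The main obstacle is this last point. The sources $E_{1+r,\,n-r+1}^2$ sit on the total-degree-$(n+2)$ line; there the even-column terms are cokernels that vanish by the induction, but the odd-column terms are kernels $\ker\big(H_q(\inc)\colon H_q(\GL_{q-1}(R)) \arr H_q(\GL_q(R))\big)$ of stabilization maps one rank \emph{below} the inductive range, which are not controlled by the naive induction. Showing that these kernels carry trivial differentials into $(1,n)$ — equivalently, that the surviving $\ker(H_n(\inc))$ receives nothing — is the delicate heart of the argument: it must be organized as a simultaneous induction tracking the injectivity and surjectivity of the stabilization maps in lockstep, with the vanishing of $d^1$ on even columns providing the mechanism that lets the induction close. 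A secondary bookkeeping point is to check that the numerical hypotheses $n\leq (c_{n+1}(R)-3)/2$ and $n<(p-1)d$ (respectively $n<(p-1)d-2$) propagate correctly to the smaller ranks invoked along the way.
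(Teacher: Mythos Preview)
Your plan matches the paper's: both defer to Guin's argument in \cite{guin1989}, and you have sketched its skeleton correctly --- the reduction to injectivity of $H_n(\GL_n(R))\to H_n(\GL_{n+1}(R))$ via Theorem~\ref{stability1}, the spectral sequence attached to $\CC_\bullet(R^{n+1})$ with Lemma~\ref{sah-guin} supplying the extra degree of acyclicity, the identification of the $E^1$-terms and the alternating $d^1$'s through Proposition~\ref{affine}, and the focus on $E_{1,n}^2=\ker\big(H_n(\GL_n(R))\to H_n(\GL_{n+1}(R))\big)$.

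There is, however, a genuine error in your analysis of the sources on the total-degree-$(n+2)$ line. For even $p$ with $p+q=n+2$ one has
\[
E_{p,q}^2\;=\;\coker\big(H_q(\GL_{q-2}(R))\to H_q(\GL_{q-1}(R))\big),
\]
which sits \emph{two} ranks below the stable range and is \emph{not} killed by the naive induction --- the same obstruction you correctly flag for the odd-column kernels. So both families of incoming sources are uncontrolled, not just the odd ones. In fact already on the $(n+1)$-line the even-column terms $E_{2k,\,n+1-2k}^2=\coker\big(H_q(\GL_{q-1}(R))\to H_q(\GL_q(R))\big)$ need not vanish either. Guin's argument does not proceed by showing these groups are zero; rather, the induction is set up so that surjectivity one rank earlier is proved simultaneously with injectivity at the top (this is the ``simultaneous induction'' you allude to but do not specify). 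Your sketch therefore locates the difficulty in roughly the right place but misdiagnoses which terms are easy and which are hard; you should consult \cite[\S2.2]{guin1989} for the precise inductive package.
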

\begin{proof}
This can be done as the proof of Theorem~1 in  Subsection 2.2 of \cite{guin1989}. 
\end{proof}

\begin{prp}\label{h3-k3}
Let $|R/\mmm_R|\neq 2,3,4,5,8,9,16,32$. If $R$ is a domain or an algebra over a field,
we may only assume that $|R/\mmm_R|\neq 2,3,4,5,8$. Then we have the homology stability
\[
H_3(\GL_3(R)) \overset{\simeq}{\larr} H_3(\GL_{4}(R)) \overset{\simeq}{\larr} 
H_3(\GL_{5}(R)) \overset{\simeq}{\larr}\cdots.
\]
Furthermore, $K_3^\ind(R)\simeq H_3(\SL(R))/T$, were $T$ is generated by the elements
${\bf c}(\diag(a,1,a^{-1}), \diag(b,b^{-1},1), \diag(c,1,c^{-1}))$, $a,b,c\in \rr$. 
\end{prp}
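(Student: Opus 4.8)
The plan is to treat the two assertions separately, reading the stability range off Theorem~\ref{sah-stability} and obtaining the $K$-theoretic identification from the Hurewicz map. For the stability I would apply Theorem~\ref{sah-stability} with $n=3$, feeding in the values of $c_4(R)$ recorded in Example~\ref{cnR}(v). The hypothesis $(c_4(R)-3)/2\geq 3$, i.e. $c_4(R)\geq 9$, holds exactly for $|R/\mmm_R|\geq 7$, so it already rules out $|R/\mmm_R|\in\{2,3,4,5\}$. In the general case Theorem~\ref{sah-stability}(ii) also demands $(p-1)d-3\geq 3$, i.e. $(p-1)d\geq 6$; writing $|R/\mmm_R|=p^d$ one checks that this fails only for $p^d=8,9,16,32$ (where $(p-1)d=3,4,4,5$) and holds for every other admissible order, yielding the stated isomorphisms precisely when $|R/\mmm_R|\notin\{2,3,4,5,8,9,16,32\}$. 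If $R$ is a domain or an algebra over a field I would instead invoke Theorem~\ref{sah-stability}(i), which only requires $(p-1)d-1\geq 3$, i.e. $(p-1)d\geq 4$; the single extra excluded order is then $p^d=8$, giving the set $\{2,3,4,5,8\}$.

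For the second assertion I would pass to the stable groups. Since $R$ is local, $\SK_1(R)=0$, so $\SL(R)=\Ee(R)$ and $B\SL(R)^{+}$ is simply connected with $\pi_n=K_n(R)$ for $n\geq 2$; in particular $H_1(\SL(R))=0$, the Hurewicz map gives $H_2(\SL(R))\simeq K_2(R)$, and there is a Hurewicz homomorphism $h\colon K_3(R)\arr H_3(\SL(R))$. The first part, together with the companion stability for $\SL$ read off from the splitting $\GL(R)=\SL(R)\rtimes\rr$, guarantees that $H_3(\SL(R))$ has already stabilised, so all groups in sight are the stable ones. To compute $h$ I would use J.\,H.\,C.\ Whitehead's exact sequence for the simply connected infinite loop space $B\SL(R)^{+}$,
\[
H_4(\SL(R))\arr \Gamma(K_2(R))\arr K_3(R)\overset{h}{\arr}H_3(\SL(R))\arr 0,
\]
with $\Gamma$ the quadratic functor. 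For an infinite loop space the middle map is multiplication by the stable Hopf element, which on $K$-theory is multiplication by $\{-1\}\in K_1(R)$; hence $h$ is surjective with $\ker h=\{-1\}\cdot K_2(R)$. By Proposition~\ref{mir-stability} we have $K_2(R)=K_2^M(R)$, generated by the symbols $\{b,c\}$, so $\{-1\}\cdot K_2(R)$ is generated by the $\{-1,b,c\}$ and therefore lies in $\im\big(K_3^M(R)\arr K_3(R)\big)$.

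Setting $T':=h\big(\im(K_3^M(R)\arr K_3(R))\big)$, the inclusion $\ker h\se \im(K_3^M(R)\arr K_3(R))$ gives, by the isomorphism theorem,
\[
H_3(\SL(R))/T'\simeq K_3(R)/\im\big(K_3^M(R)\arr K_3(R)\big)=K_3^\ind(R).
\]
It then remains to check $T'=T$: the composite $K_3^M(R)\arr K_3(R)\overset{h}{\arr}H_3(\SL(R))$ carries a symbol $\{a,b,c\}$ to the Pontryagin product of the three commuting coroot lifts $\diag(a,1,a^{-1}),\diag(b,b^{-1},1),\diag(c,1,c^{-1})\in\SL_3(R)$, which is exactly ${\bf c}(\diag(a,1,a^{-1}),\diag(b,b^{-1},1),\diag(c,1,c^{-1}))$. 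The hard part will be the Hurewicz computation itself: one must know that the quadratic-functor differential of $B\SL(R)^{+}$ is $\eta$-multiplication and that $\eta$ corresponds to the Milnor symbol $\{-1\}$, so that $\ker h$ is absorbed into the Milnor image and $h$ becomes injective modulo $T$. Granting this, together with the compatibility of the Milnor product with the Pontryagin product on the diagonal torus, the identification $T'=T$ and hence the proposition follow.
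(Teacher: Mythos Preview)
Your treatment of the stability assertion is correct and matches the paper exactly: both invoke Theorem~\ref{sah-stability} for $n=3$ together with the values of $c_4(R)$ from Example~\ref{cnR}(v), and your case analysis of the excluded residue-field orders is accurate.

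For the second assertion the paper simply refers to \cite[Corollary~2.4, \S2]{mirzaii-mokari2015}, remarking that the stability just proved is what allows one to carry \cite[Proposition~2.1]{mirzaii-mokari2015} from rings with many units over to the present class of local rings. Your route via Whitehead's exact sequence for $B\SL(R)^{+}$, the identification of the $\eta$-action with multiplication by $\{-1\}$, and the inclusion $\{-1\}\cdot K_2(R)\subseteq\im\big(K_3^M(R)\to K_3(R)\big)$ coming from Proposition~\ref{mir-stability}, is the standard argument underlying that corollary (it is essentially Suslin's approach in \cite{suslin1991}), so the two are not genuinely different. One point you should make more explicit, however, is the identification $T'=T$: the usual Hurewicz/Loday computation produces the Pontryagin product ${\bf c}\big(\diag(a,1,1),\diag(1,b,1),\diag(1,1,c)\big)$ inside $H_3(\GL(R))$, not inside $H_3(\SL(R))$. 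To rewrite this as the stated $\SL_3$-element one uses the $H_3$-stability from the first part (so as to work in $H_3(\GL_3(R))$) together with the splitting $\GL_3=\SL_3\rtimes\rr$ to strip off the $\rr$-contributions. This is precisely where the paper's remark about needing the homology stability enters, and it is the step you pass over with the phrase ``compatibility of the Milnor product with the Pontryagin product on the diagonal torus.''
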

\begin{proof}
The homology stability result follows from Theorem \ref{sah-stability} and Example \ref{cnR}(v).
The second claim can be proved as \cite[Corollary 2.4, \S2]{mirzaii-mokari2015}. Here
we need the homology stability to generalize \cite[Proposition 2.1]{mirzaii-mokari2015}.
\end{proof}

%%%%%%%%%%%%%%%%%%%%%%%%%%%%%%%%%%%%%%%%%%%%%%%%%%%%%%%%%%%%%%%%%%%%%%%%%%%%%%%%%%%%%%
\section{Third homology of general linear groups of rank 2 and 3}\label{bloch-GL2}
%%%%%%%%%%%%%%%%%%%%%%%%%%%%%%%%%%%%%%%%%%%%%%%%%%%%%%%%%%%%%%%%%%%%%%%%%%%%%%%%%%%%%%

Let $R$ be a local ring such that its residue field has at least four elements.
Then $H_n(\GL_2(R), C_\bullet(R^2))\simeq H_n(\GL_2(R))$ for $0 \leq n \leq 3$.
The spectral sequence $E_{p,q}^1$ gives a filtration 
\[
0=F_{-1}H_3(\GL_2(R)) \se \dots \se F_{3}H_3(\GL_2(R))=H_3(\GL_2(R)),
\]
such that $E_{i,3-i}^\infty\simeq F_{i}H_3(\GL_2(R))/F_{i-1}H_3(\GL_2(R))$.
By Lemma \ref{d3}, we have
\[
B(R)\simeq E_{3,0}^\infty\simeq H_3(\GL_2(R))/F_{2}H_3(\GL_2(R)).
\]
Since $E_{2,1}^\infty=0$, $F_{1}H_3(\GL_2(R))=F_{2}H_3(\GL_2(R))$. Moreover,
\[
\!\!\!\!\!\!\!\!
E_{0,3}^\infty\simeq F_{0}H_3(\GL_2(R))=\im(H_3(B_2)\arr H_3(\GL_2(R))).
\] 
The next lemma studies the map 
\[
(\rr \otimes \rr)^\sigma\!\!=\!E_{1,2}^2 -\!\!\!\two\! E_{1,2}^\infty\!=
\!F_{2}H_3(\GL_2(R))/H_3(B_2)
\! \se \! H_3(\GL_2(R))/H_3(B_2).
\]

\begin{lem}\label{b-h00}
Let $u \in (\rr \otimes \rr)^\sigma \se H_2(T_2)^\sigma$ and  $h \in \mathcal{B}_2(T_2)_{T_2}$ 
a representing cycle for $u$. Let $\tau$ be the automorphism of $\mathcal{B}_\bullet(T_2)_{T_2}$ 
induced by $\sigma$ and let $\tau(h)-h=\partial_3^{T_2}(b)$, $b \in \mathcal{B}_3(T_2)_{T_2}$.
Then the image of $u$ under the map
\[
(\rr \otimes \rr)^\sigma \arr H_3(\GL_2(R))/H_3(B_2)
\] 
coincides with the homology class of the cycle $b -\rho_s(h)$, where $s:= {\mtx 0 1 1 0}$ and
\[
\rho_s([g_1|g_2]):= [s| sg_1s^{-1}|sg_2s^{-1}]-[g_1|s|sg_2s^{-1}]+[g_1|g_2|s].
\]
\end{lem}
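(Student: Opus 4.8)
The plan is to evaluate the edge homomorphism
$E_{1,2}^2 \two E_{1,2}^\infty \harr H_3(\GL_2(R))/H_3(B_2)$
of the spectral sequence $E_{p,q}^1$ of Section~\ref{SP} on the chain level, exactly in the spirit of Lemma~\ref{d3}. Since $|R/\mmm_R|\ge 4$, Lemmas~\ref{hutchinson1} and~\ref{homology-complex} give $H_3(\GL_2(R),C_\bullet(R^2))\simeq H_3(\GL_2(R))$, and the augmentation $C_\bullet(R^2)\arr\z$ (which kills $C_p$ for $p\ge 1$) identifies the class of a total $3$-cycle of the double complex $\mathcal{C}_\bullet(\GL_2(R))\otimes_{\GL_2(R)}C_\bullet(R^2)$ with the image of its $C_0$-component in $\mathcal{B}_3(\GL_2(R))_{\GL_2(R)}$. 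So the task is to lift $u$ to such a total $3$-cycle and to read off that $C_0$-component modulo $H_3(B_2)$.

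First I would represent $u$ at bidegree $(1,2)$: under the Shapiro isomorphism $E_{1,2}^1\simeq H_2(T_2)$ attached to the orbit of $(\infty,{\bf 0})$, the cycle $h$ determines a vertical cycle $z_{1,2}$ supported over $(\infty,{\bf 0})$. The horizontal differential $1\otimes\partial_1$ then sends $z_{1,2}$ to $h\otimes\big(({\bf 0})-(\infty)\big)$ at bidegree $(0,2)$, whose two summands live over the basepoints $\infty$ (stabiliser $B_2$) and ${\bf 0}=s\cdot\infty$ (stabiliser $sB_2s^{-1}$), where $s={\mtx 0 1 1 0}$ as in Lemma~\ref{d3}. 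To descend to $H_3$ I must produce a vertical primitive $z_{0,3}$ with $(\partial^{\mathcal{C}}\otimes 1)z_{0,3}=\mp\,h\otimes\big(({\bf 0})-(\infty)\big)$; the image of $u$ is then the class of $\epsilon_\ast(z_{0,3})\in\mathcal{B}_3(\GL_2(R))_{\GL_2(R)}$.

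Both $b$ and $\rho_s$ enter in the construction of $z_{0,3}$. Using the section and the chain map of Lemma~\ref{G-H}, I would transport the ${\bf 0}$-supported summand to the $\infty$-basepoint by translating with $s$; on the bar complex this translation is conjugation $c_s$ by $s$, whose standard chain homotopy to the identity is precisely the operator $\rho_s$ of the statement, and $c_s(h)=\tau(h)$ because $h$ lies in $T_2$. After the translation the two summands contribute the $B_2$-chain $\tau(h)-h$ over $\infty$ together with the homotopy correction $\rho_s(h)$. Since $\tau(h)-h=\partial_3^{T_2}(b)$ by hypothesis and $T_2\se B_2$, the chain $b$ supplies exactly the required vertical primitive over $\infty$, and assembling the pieces yields $\epsilon_\ast(z_{0,3})=b-\rho_s(h)$ up to sign. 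That this is a genuine $3$-cycle follows from the homotopy identity $\partial_3\rho_s+\rho_s\partial_2=c_s-\id$: as $h$ is a cycle the term $\rho_s\partial_2 h$ drops out, so $\partial_3\rho_s(h)=\tau(h)-h=\partial_3 b$ and $\partial_3\big(b-\rho_s(h)\big)=0$. Finally $b\in\mathcal{B}_3(T_2)_{T_2}$ maps into the image of $H_3(B_2)$, so the class of $b-\rho_s(h)$ is well defined in $H_3(\GL_2(R))/H_3(B_2)$ and is the sought image of $u$.

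The hard part will be the bookkeeping of the previous paragraph: checking that the section-induced chain map of Lemma~\ref{G-H} really carries the ${\bf 0}$-supported summand to $\tau(h)$ with error term exactly $\rho_s(h)$, and that every further correction term generated in solving for $z_{0,3}$ either cancels or lands in $\mathcal{B}_3(B_2)_{B_2}$, hence dies modulo $H_3(B_2)$. This is the same explicit, sign-sensitive manipulation that produced the auxiliary element $W_a$ in Lemma~\ref{d3}; here it is lighter, since the homotopy identity for $\rho_s$ absorbs most of the cancellation and only the comparison of the two basepoint contributions has to be done by hand.
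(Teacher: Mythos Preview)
The paper does not give its own proof of this lemma; it simply cites \cite[Lemma~2.5]{suslin1991}. Your proposal is precisely the standard edge-map computation that underlies that citation: lift $u$ through the double complex $\mathcal{C}_\bullet(\GL_2(R))\otimes_{\GL_2(R)}C_\bullet(R^2)$ from bidegree $(1,2)$ to $(0,3)$, using that the Shapiro identification over the two basepoints $\infty$ and ${\bf 0}=s\cdot\infty$ differs by conjugation with $s$, and that $\rho_s$ is the standard chain homotopy between $c_s$ and $\id$ on the bar complex. Your verification that $b-\rho_s(h)$ is a cycle via the homotopy identity $\partial_3\rho_s+\rho_s\partial_2=c_s-\id$, and that the ambiguity in the choice of $b$ lands in $H_3(T_2)\se H_3(B_2)$, is correct.

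One small clarification: the reference to Lemma~\ref{G-H} is slightly indirect. What actually transports the $({\bf 0})$-summand to $(\infty)$ is the $\GL_2$-equivariance of the tensor product $\mathcal{C}_\bullet(\GL_2)\otimes_{\GL_2}C_0$ itself (moving $s$ across the tensor), and what remains is to compare the resulting chain with one supported in $B_2$; the section of Lemma~\ref{G-H} then realises the Shapiro map $H_\ast(\GL_2,C_0)\simeq H_\ast(B_2)$ on the chain level, and the discrepancy between the translated chain and its image under that section is exactly $\rho_s(h)$. Once you carry the signs through with a fixed convention the ``up to sign'' disappears. Apart from that the sketch is complete and matches the argument behind the cited reference.
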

\begin{proof}
See \cite[Lemma 2.5]{suslin1991}.
\end{proof}

Let $\GM_2(R)$ denotes the group of monomial matrices in $\GL_2(R)$ and consider the extension
\[
1 \arr T_2 \arr \GM_2(R) \arr \Sigma_2 \arr 1,
\]
where $\Sigma_2=\Bigg\{ {\mtx 1 0 0 1}, {\mtx 0 1 1 0} \Bigg\}$. We often think of
$\Sigma_2$ as the symmetric group of order two $\{1, \sigma\}$. Note that
$\GM_2(R) = T_2 \rtimes \Sigma_2$ and the action of $\sigma$ on $T_2$ is given by
$\sigma(a,b)=(b,a)$. From this extension we obtain the first quadrant spectral sequence
\[
{E'}^2_{p,q}=H_p(\Sigma_2 ,H_q(T_2)) \Rightarrow H_{p+q}(\GM_2(R)).
\]
This spectral sequence gives us a filtration
\[
0=F_{-1}H_3(\GM_2(R)) \se \dots \se F_3H_3(\GM_2(R))=H_3(\GM_2(R)),
\]
such that
\[
\begin{array}{l}
\vspace{1.5 mm}
{E'}^\infty_{0, 3}\simeq F_0H_3(\GM_2(R))=H_3(T_2)_\sigma,\\
\vspace{1.5 mm}
{E'}^\infty_{1, 2}\simeq {E'}_{1,2}^2 \simeq F_1H_3(\GM_2(R))/F_0H_3(\GM_2(R)),\\
\vspace{1.5 mm}
{E'}^\infty_{2, 1}\simeq F_2H_3(\GM_2(R))/F_1H_3(\GM_2(R))=0,\\
{E'}^\infty_{3, 0}\simeq 
H_3(\GM_2(R))/F_2H_3(\GM_2(R)).
\end{array}
\]
From the natural inclusion $\Sigma_2 \se \GM_2(R)$, one easily sees that the
composition $H_3(\Sigma_2) \arr H_3(\GM_2(R)) \arr H_3(\Sigma_2)$ coincides with the
identity map. Thus ${E'}^\infty_{3, 0}\simeq H_3(\Sigma_2)$.
Now the above relations imply the following isomorphisms
\begin{align}\label{GM2}
H_3(\GM_2(R)) \simeq F_2H_3(\GM_2(R)) \oplus H_3(\Sigma_2),
\end{align}
\begin{align}
{E'}_{1, 2}^2 \simeq F_2H_3(\GM_2(R))/H_3(T_2).
\end{align}

The next lemma gives an explicit description of the composition
\[
\begin{CD}
H_2(T_2)^\sigma -\!\!\!\twoheadrightarrow {E'}_{1,2}^2 \overset{\simeq}{\larr}
F_2H_3(\GM_2(R))/H_3(T_2) \se H_3(\GM_2(R))/H_3(T_2).
\end{CD}
\]

\begin{lem}\label{b-h}
Let $u \in (\rr \otimes \rr)^\sigma \se H_2(T_2)^\sigma$ and  $h \in \mathcal{B}_2(T_2)_{T_2}$ a 
representing cycle for $u$. Let $\tau$ be the automorphism of $\mathcal{B}_\bullet(T_2)_{T_2}$ 
induced by $\sigma$ and let $\tau(h)-h=\partial_3^{T_2}(b)$, $b \in \mathcal{B}_3(T_2)_{T_2}$. 
Then the image of $u$ under the map
\[
(\rr \otimes \rr)^\sigma \arr H_3(\GM_2(R))/H_3(T_2)
\] 
coincides with the homology class of the cycle $b -\rho_s(h)$, where $\rho_s$ is defined
in Lemma \ref{b-h00}.
\end{lem}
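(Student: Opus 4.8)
The plan is to identify the displayed map as the edge homomorphism of the Lyndon--Hochschild--Serre spectral sequence ${E'}^2_{p,q}=H_p(\Sigma_2,H_q(T_2))$ of the split extension $1\arr T_2\arr \GM_2(R)\arr \Sigma_2\arr 1$, and to compute it at the chain level exactly as Suslin does for Lemma~\ref{b-h00} in \cite[Lemma~2.5]{suslin1991}. The essential observation is that the whole computation is internal to $\GM_2(R)=T_2\rtimes\lan s\ran$: the generator $\sigma$ of $\Sigma_2$ is lifted by $s=\mtx 0 1 1 0$, conjugation by $s$ realizes the automorphism $\tau$ of $\mathcal{B}_\bullet(T_2)_{T_2}$, and $\rho_s$ only involves $s$ together with elements of $T_2$ and their $s$--conjugates. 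Since $\Sigma_2\simeq \z/2$, one has ${E'}^2_{1,2}=H_1(\Sigma_2,H_2(T_2))\simeq H_2(T_2)^\sigma/(1+\sigma)H_2(T_2)$, and the surjection $H_2(T_2)^\sigma \two {E'}^2_{1,2}$ is the canonical quotient. Our task is therefore to write down a $3$--cycle in $\mathcal{B}_3(\GM_2(R))_{\GM_2(R)}$ representing the lift of $u$ to $F_2H_3(\GM_2(R))$ modulo $H_3(T_2)$.

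First I would check that $z:=b-\rho_s(h)$ is a $3$--cycle. Writing $g':=sgs^{-1}$ and using $s^2=1$, a direct expansion of the bar differential of $\GM_2(R)$ gives, for a single term $[g_1|g_2]$,
\[
\partial_3\rho_s([g_1|g_2])=\tau([g_1|g_2])-[g_1|g_2]+M_1+M_2,
\]
where $M_1=[s|(g_1g_2)']-[s|g_1']-[s|g_2']$ and $M_2=[g_1|s]+[g_2|s]-[g_1g_2|s]$ are the terms carrying the single factor $s$. Summing over the terms of $h$, these are (up to sign) the images of $\partial_2 h$ under the slot--insertion maps $[g]\mt[g|s]$ and $[g]\mt [s|g']$; since $h$ is a cycle we have $\partial_2 h=0$, whence $M_1=M_2=0$. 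Thus $\partial_3\rho_s(h)=\tau(h)-h=\partial_3^{T_2}(b)$, and $\partial_3 z=0$.

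It remains to match the class of $z$ with the image of $u$. For this I would filter $\mathcal{B}_\bullet(\GM_2(R))_{\GM_2(R)}$ by the number of bar slots lying outside $T_2$, using the section $\sigma\mt s$; this realizes ${E'}^2_{p,q}$ together with its filtration $F_\bullet H_n(\GM_2(R))$. The class $u\in H_2(T_2)^\sigma$ is represented at the $E^1$--stage by $[s]\otimes h$ in $\mathcal{B}_1(\Sigma_2;H_2(T_2))$, which is a $d^1$--cycle precisely because $\sigma u=u$; the three--term expression $\rho_s(h)$ is then the shuffle lift of $[s]\otimes h$ into $F_1\mathcal{B}_3$, and subtracting the filtration--$0$ correction $b$ (which exists because $\tau(h)-h$ bounds in $\mathcal{B}_\bullet(T_2)_{T_2}$) produces the genuine cycle $z$. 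Tracing the zig--zag defining the isomorphism ${E'}^2_{1,2}\simeq F_1H_3(\GM_2(R))/F_0H_3(\GM_2(R))$ then shows that $[z]$ is the required lift, and reducing modulo $F_0H_3(\GM_2(R))=\im\big(H_3(T_2)\arr H_3(\GM_2(R))\big)$ gives the asserted class in $H_3(\GM_2(R))/H_3(T_2)$. This bookkeeping---recognising $\rho_s(h)$ as the shuffle representative that feeds into the connecting map---is the main obstacle; but it is word for word Suslin's computation for Lemma~\ref{b-h00} with $(\GL_2(R),B_2)$ replaced by $(\GM_2(R),T_2)$, legitimate because the cycle $z$ already lies in $\GM_2(R)$, so that only the cycle check above and the sign conventions need to be re-verified.
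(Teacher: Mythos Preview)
Your proposal is correct and follows essentially the same approach as the paper, which simply cites \cite[Lemma~4.3]{mirzaii-mokari2015}; that reference in turn carries out precisely the chain-level computation you describe, transporting Suslin's argument for Lemma~\ref{b-h00} from $(\GL_2(R),B_2)$ to $(\GM_2(R),T_2)$. Your observation that the cycle $b-\rho_s(h)$ already lives in $\GM_2(R)$ is exactly the point, and your verification that $\partial_3\rho_s(h)=\tau(h)-h$ (with the $M_1,M_2$ terms vanishing against $\partial_2 h=0$) together with the filtration bookkeeping is the content of the cited proof.
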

\begin{proof}
See \cite[Lemma~4.3]{mirzaii-mokari2015}.
\end{proof}

For more details about the spectral sequence ${E'}_{p,q}^1$ see  \cite[Section~4]{mirzaii-mokari2015}.
Since the residue field of $R$ has at least four elements, $H_i(C_\bullet(R^2))=0$
for $-1\leq i \leq 3$. From this we obtain a natural map
\[
\varphi: H_3(\GL_2(R)) \arr H_3(C_\bullet(R^2)_{\GL_2(R))})=\ppp(R).
\]
%The following theorem and Proposition we will study this map.

\begin{thm}\label{gl-gm}
Let $|R/\mmm_R|\neq 2, 3, 4, 5, 8, 9, 16, 32$. If $R$ is a domain 
or an algebra over a field, we may only assume that
$|R/\mmm_R|\neq 2, 3, 4, 8$. Then we have the exact sequence
\[
H_3(\GM_2(R)) \arr H_3(\GL_2(R)) \overset{\varphi}{\arr} B(R) \arr 0.
\]
\end{thm}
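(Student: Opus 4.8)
The map $\varphi$ is the edge homomorphism of the spectral sequence $E_{p,q}^1$ of Section~\ref{SP}, which converges to $H_{p+q}(\GL_2(R),C_\bullet(R^2))\simeq H_{p+q}(\GL_2(R))$ in total degree $\leq 3$ (Lemmas~\ref{hutchinson1} and \ref{homology-complex}, using $|R/\mmm_R|\geq 4$). The plan is to first extract surjectivity of $\varphi$ together with the identity $\ker\varphi=F_1H_3(\GL_2(R))$ from this spectral sequence, and then to identify the image of $\psi\colon H_3(\GM_2(R))\arr H_3(\GL_2(R))$ with the same subgroup. By Lemma~\ref{d3} the differential $d_{3,0}^3$ has kernel exactly $B(R)$, so $E_{3,0}^\infty\simeq B(R)$; since $E_{3,0}^\infty\simeq H_3(\GL_2(R))/F_2H_3(\GL_2(R))$ and $\varphi$ is the associated projection, $\varphi$ is surjective with $\ker\varphi=F_2H_3(\GL_2(R))$. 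As $E_{2,1}^\infty=0$ we have $F_2H_3(\GL_2(R))=F_1H_3(\GL_2(R))$, so the whole theorem reduces to proving $\im\psi=F_1H_3(\GL_2(R))$.

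For the inclusion $\im\psi\se F_1H_3(\GL_2(R))$ I would use the splitting $H_3(\GM_2(R))\simeq F_2H_3(\GM_2(R))\oplus H_3(\Sigma_2)$ of (\ref{GM2}) and treat the pieces separately. The bottom piece $H_3(T_2)_\sigma$ of $F_2H_3(\GM_2(R))$ maps into $F_0H_3(\GL_2(R))=\im(H_3(B_2)\arr H_3(\GL_2(R)))$ because $T_2\se B_2$; the quotient $(\rr\otimes\rr)^\sigma$ maps into the filtration step realizing $E_{1,2}^\infty$ by Lemma~\ref{b-h00}; and the order-two class generating $H_3(\Sigma_2)$ has to be checked to lie in $F_1$, equivalently that $\varphi$ kills it, which holds because the permutation matrix $s=\mtx 0 1 1 0$ cannot detect the four-point configuration invariant recorded by $\ppp(R)$. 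Together these give $\varphi\circ\psi=0$, i.e. $\im\psi\se\ker\varphi=F_1H_3(\GL_2(R))$.

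The reverse inclusion $F_1H_3(\GL_2(R))\se\im\psi$ is the core of the argument and I would prove it along the filtration $0\se F_0\se F_1$. For the quotient $F_1/F_0\simeq E_{1,2}^\infty$ the decisive point is that Lemmas~\ref{b-h} and \ref{b-h00} are built from one and the same cycle $b-\rho_s(h)$: this cycle represents a class in $H_3(\GM_2(R))$ whose image in $H_3(\GL_2(R))/F_0$ generates $E_{1,2}^\infty$, so $\psi$ surjects onto $F_1/F_0$. For the bottom piece $F_0=\im(H_3(B_2))$ I would invoke the splitting $H_3(B_2)\simeq H_3(T_2)\oplus A_3$ of Lemma~\ref{T2-B2}; the $H_3(T_2)$ summand lies in $\im\psi$ since $T_2\se\GM_2(R)$, so it remains to dispose of $A_3$. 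Here Proposition~\ref{hutchinson4} enters: its vanishing statements force $H_i(T_2,H_2(R))=0$, whence the exact sequence of Lemma~\ref{T2-B2} collapses to $A_3\simeq H_3(R)_\rr$, and they further force $H_3(R)_\rr=0$ --- directly by Proposition~\ref{hutchinson4}(iii) in the domain or algebra case, and in general by combining the prime-field vanishing of Proposition~\ref{hutchinson4}(ii) with a d\'evissage through $\z/p^d$, $\q/\z$ and $\q$ as in Lemma~\ref{isom}. Thus $A_3=0$, $F_0=\im(H_3(T_2))\se\im\psi$, and combining with the previous paragraph $\im\psi=F_1H_3(\GL_2(R))=\ker\varphi$.

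The main obstacle is precisely this bottom step: showing that the full image of $H_3(B_2)$ --- in particular the non-toral summand $A_3$, which a priori is unrelated to monomial matrices --- is absorbed into $\im\psi$. This is exactly where the excluded residue-field cardinalities are used, since the vanishing $H_3(R)_\rr=0$ needed to kill $A_3$ requires $(p-1)d$ large enough, forcing the exclusion of $9$, $16$ and $32$ in the general case (the weaker exclusions $2,3,4,8$ sufficing when $R$ is a domain or an algebra over a field). A secondary technical point is the verification that the $\z/2$ coming from $H_3(\Sigma_2)$ maps into $F_1$ rather than contributing nontrivially to $B(R)$.
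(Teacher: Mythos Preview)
Your strategy matches the paper's: reduce to $\im\psi = F_2H_3(\GL_2(R))$ (equivalently $F_1$, since $E_{2,1}^\infty=0$), compare the two filtrations via Lemmas~\ref{b-h00} and~\ref{b-h}, and use that $H_3(B_2)$ contributes nothing beyond $H_3(T_2)$. Two points where the paper is cleaner:

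\medskip

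\noindent\textbf{The non-toral summand $A_3$.} Rather than unpacking the exact sequence of Lemma~\ref{T2-B2} and arguing separately that $H_i(T_2,H_2(R))=0$ and $H_3(R)_\rr=0$, the paper simply invokes Proposition~\ref{affine} to conclude $H_i(B_2)\simeq H_i(T_2)$ for $0\le i\le 3$ in one stroke. This is the same content, but your d\'evissage in the general case is misrouted: Lemma~\ref{isom} is a statement about a \emph{map of group homologies}, and in the proof of Proposition~\ref{affine}(iii) it is applied to the inclusion $T_2\hookrightarrow B_2$ (after Proposition~\ref{hutchinson4}(ii) gives the prime-field isomorphisms via the Lyndon--Hochschild--Serre spectral sequence). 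It does not directly yield $H_0(\rr,H_3(R,\z))=0$ from $H_0(\rr,H_3(R,k))=0$; the passage through $\z/p^d$, $\q/\z$, $\q$ has to happen at the level of $H_\bullet(B_2)$ versus $H_\bullet(T_2)$. Your intended conclusion is correct, but the cited mechanism is the packaged Proposition~\ref{affine}, not a bare d\'evissage on $H_3(R)_\rr$.

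\medskip

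\noindent\textbf{The $H_3(\Sigma_2)$ summand.} Your justification that this piece lies in $F_1$ (``cannot detect the four-point configuration'') is not an argument. The paper disposes of it by observing that $s=\mtx{0}{1}{1}{0}$ is conjugate in $\GL_2(R)$ to $\mtx{1}{1}{0}{-1}\in B_2$, hence $\im\big(H_3(\Sigma_2)\to H_3(\GL_2(R))\big)\se\im\big(H_3(B_2)\to H_3(\GL_2(R))\big)=F_0H_3(\GL_2(R))$. This simultaneously gives the inclusion and shows that the $\Sigma_2$-summand is redundant for surjectivity onto $F_2H_3(\GL_2(R))$.

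\medskip

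With these two replacements your proof coincides with the paper's.
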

\begin{proof}
By Proposition \ref{affine}, $H_i(B_2)\simeq H_i(T_2)$ for $0 \leq i\leq 3$.
Now consider the natural map $H_3(\GM_2(R))\arr H_3(\GL_2(R))$. It is clear
from the filtrations of $H_3(\GM_2(R))$ and $H_3(\GL_2(R))$ that
$F_0H_3(\GM_2(R))$ maps onto $F_0H_3(\GL_2(R))$. This fact together with Lemmas \ref{b-h}
and \ref{b-h00} imply that $F_2H_3(\GM_2(R))$ maps onto $F_2H_3(\GL_2(R))$. Thus 
\[
\im(H_3(\GM_2(R)))= F_2H_3(\GL_2(R)) +\im H_3(\Sigma_2).
\]
The matrix $s={\mtx 0 1 1 0}$ is conjugate to the matrix ${\mtx 1 1 0 {-1}} \in B_2$. Hence
\begin{align*}
\im(H_3(\Sigma_2)\arr H_3(\GL_2(R))) & \se \im(H_3(B_2)\arr H_3(\GL_2(R))) \\
& = F_0H_3(\GL_2(R)\se F_2H_3(\GL_2(R)).
\end{align*}
This completes the proof of the theorem.
\end{proof}

\begin{thm}\label{kernel}
Let $|R/\mmm_R|\neq 2, 3, 4, 5, 8, 9, 16, 32$. 
If $R$ is a domain or an algebra over a field, we only may assume that
$|R/\mmm_R|\neq 2, 3, 4, 8$. Then the kernel of
$\inc_\ast:H_3(\GL_2(R)) \arr H_3(\GL_3(R))$  consists of elements of the form
${\begin{array}{c} \!\! \sum \!\! \end{array}}
{\rm \bf{c}} (\diag(a,1), \diag(1,b),\diag(c, c^{-1}))$
provided that
\[
{\begin{array}{c} \!\! \sum \!\! \end{array}}
a\otimes \{b, c\}+b\otimes \{a, c\}=0 \in \fff \otimes K_2^M(R).
\]
In particular, $\ker(\inc_\ast) \subseteq \fff\cup H_2(\GL_1(R)) \subseteq H_3(\GL_2(R))$,
where the cup product is induced by the inclusion
$\inc: \fff \times \GL_1(R) \arr \GL_2(R)$. Moreover $\ker(\inc_\ast)$ is a $2$-torsion
group.
\end{thm}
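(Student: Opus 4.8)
The plan is to extract the kernel from the exactness statement of Lemma~\ref{mir-elb2}(ii). The stabilization map factors as
\[
H_3(\GL_2(R)) \overset{j}{\arr} H_3(\rr\times\GL_2(R)) \overset{\inc_\ast}{\arr} H_3(\GL_3(R)),
\]
where $j$ is induced by $g\mt(1,g)$ and $\inc$ by the block inclusion $(a,g)\mt\diag(a,g)$. Since $j$ lands in the K\"unneth summand $H_0(\rr)\otimes H_3(\GL_2(R))=H_3(\GL_2(R))$ and $\inc_\ast\circ j$ is the stabilization, an element $x$ lies in $\ker(\inc_\ast\colon H_3(\GL_2(R))\arr H_3(\GL_3(R)))$ if and only if $j(x)\in\ker(\inc_\ast)=\im(\alpha_\ast-\inc_\ast)$, i.e. $j(x)=(\alpha_\ast-\inc_\ast)(z)$ for some $z\in H_3(\rr^2\times\GL_1(R))$.

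First I would decompose both sides by the K\"unneth formula (legitimate since $\rr$ is abelian), writing $H_3(\rr^2\times\GL_1(R))=H_3(\rr_1\times\rr_2\times\rr_3)$ as the sum of the pieces $H_3(\rr_i)$, $H_2(\rr_i)\otimes\rr_j$, and the triple term $\rr_1\otimes\rr_2\otimes\rr_3$ (together with $\tors$-summands), and similarly for $H_3(\rr\times\GL_2(R))$. As $\alpha(a,b,c)=(b,a,c)$ merely transposes the first two $\rr$-factors and $\inc$ is a block inclusion, the action of $\alpha_\ast$ and $\inc_\ast$ on each summand is read off from the induced maps $H_\ast(\GL_1(R))\arr H_\ast(\GL_2(R))$ together with the permutation of factors. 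Matching the two sides of $j(x)=(\alpha_\ast-\inc_\ast)(z)$ summand by summand, the $H_3(\GL_2(R))$-component expresses $x$ as a sum of products ${\bf c}(\diag(a,1),\diag(1,b),\diag(c,c^{-1}))$ coming from the torus, while every other component is forced to vanish.

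The decisive equation comes from the $H_1(\rr)\otimes H_2(\GL_2(R))$-summand. On the triple term one computes
\[
(\alpha_\ast-\inc_\ast)(a\otimes b\otimes c)=b\otimes{\bf c}(\diag(a,1),\diag(1,c))-a\otimes{\bf c}(\diag(b,1),\diag(1,c)),
\]
and projecting $H_2(\GL_2(R))\two K_2^M(R)$ via Proposition~\ref{mir-stability} — where, modulo $H_2(\GL_1(R))$, one has ${\bf c}(\diag(b,1),\diag(1,c))\equiv-\{b,c\}$ — turns the vanishing of this component into the relation $\sum a\otimes\{b,c\}+b\otimes\{a,c\}=0$ of the theorem (with signs fixed by the chosen conventions). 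The remaining summands ($H_2(\rr)\otimes H_1(\GL_2(R))$, $H_3(\rr)$, and the two mixed diagonal pieces) either vanish automatically or serve to pin down the auxiliary parts of $z$ in terms of its triple term; solving this linked system is the bulk of the bookkeeping, and conversely, given data satisfying the relation one builds such a $z$ explicitly, making the description exact.

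For the two final assertions I would argue from the shape of the generators. Expanding $\diag(c,c^{-1})=\diag(c,1)\diag(1,c^{-1})$ gives
\[
{\bf c}(\diag(a,1),\diag(1,b),\diag(c,c^{-1}))=-(a\wedge c)\otimes b-a\otimes(b\wedge c),
\]
with the first term in the image of $H_2(\fff)\otimes H_1(\GL_1(R))$ and the second in $\fff\cup H_2(\GL_1(R))=\im(H_1(\fff)\otimes H_2(\GL_1(R)))$. Conjugation by $w=\mtx 0 1 1 0$ interchanges the two diagonal factors and acts trivially on $H_3(\GL_2(R))$, so these two images coincide; this yields $\ker(\inc_\ast)\se\fff\cup H_2(\GL_1(R))$ and, simultaneously, the symmetry in $a\leftrightarrow b$. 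The $2$-torsion claim should then follow by feeding this Weyl symmetry back into the $w$-invariance relations together with the $\tors$-summands of the K\"unneth formula to rewrite $2x$ as an element of $\im(\alpha_\ast-\inc_\ast)$. I expect this last point, together with the sign-tracking through $\alpha_\ast-\inc_\ast$ and the identification ${\bf c}(\diag(b,1),\diag(1,c))\equiv-\{b,c\}$, to be the main obstacle; the remainder is organized K\"unneth bookkeeping.
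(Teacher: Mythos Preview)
Your approach is correct and is essentially the one the paper points to: the paper's proof simply cites \cite[Theorem~3.1]{mirzaii2012} together with Lemma~\ref{mir-elb2}(ii), and that argument proceeds exactly as you outline---factor the stabilization through $H_3(\rr\times\GL_2(R))$, use the exact sequence of Lemma~\ref{mir-elb2}(ii), and match K\"unneth summands on both sides of $j(x)=(\alpha_\ast-\inc_\ast)(z)$ to extract the shape of the kernel and the constraint in $\rr\otimes K_2^M(R)$. Your handling of the final two assertions via the conjugation action of $w=\mtx{0}{1}{1}{0}$ is also the intended mechanism.
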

\begin{proof}
This can be proved as \cite[Theorem~3.1]{mirzaii2012} using Lemma \ref{mir-elb2}(ii).
\end{proof}

Let  $C_l(R^n)$ be the free abelian groups with a basis consisting of 
$(l+1)$-tuples $(\lan w_0\ran , \dots, \lan w_l\ran )$, where every
$\min\{l+1, n\}$ of  $w_i \in R^n$ are basis of a free direct summand of $R^n$.
Let us define differentials $\partial_l : C_l(R^n) \arr C_{l-1}(R^n)$, 
$l\ge 0$, in usual way. So we have the complex of $\GL_n(R)$-modules:
\begin{align*}
C_\bullet(R^n):& \ \
\cdots  \arr C_{2}(R^n) \arr C_{1}(R^n) \arr C_0(R^n) \arr C_{-1}(R^n)=\z \arr 0.
\end{align*}

\begin{lem}\label{kerz1}
The complex $C_\bullet(R^n)$ is exact for $-1 \leq  i \leq (c_n(R)-3)/2$.
\end{lem}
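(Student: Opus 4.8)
The plan is to deduce the lemma from the already-established exactness of the vector complex $\CC_\bullet(R^n)$ in Lemma~\ref{sah-guin}, by exhibiting $C_\bullet(R^n)$ as a retract of $\CC_\bullet(R^n)$ in the category of complexes. The key observation is that the general position condition --- and hence the invariant $c_n(R)$ --- is insensitive to rescaling each vector by a unit, as is already visible from the normalizations carried out in Example~\ref{cnR}. Consequently a tuple $(v_0,\dots,v_l)$ is a generator of $\CC_l(R^n)$ if and only if the tuple of lines $(\lan v_0\ran,\dots,\lan v_l\ran)$ is a generator of $C_l(R^n)$, and the assignment $(v_0,\dots,v_l)\mt(\lan v_0\ran,\dots,\lan v_l\ran)$ defines a surjective chain map $\pi:\CC_\bullet(R^n)\arr C_\bullet(R^n)$: it commutes with the differentials because the face operators merely delete entries.

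Next I would build a section. Since $R$ is local, for every rank-one free summand $\lan w\ran$ of $R^n$ the set of its generators forms a single $\rr$-orbit; fixing a set-theoretic choice $\theta(\lan w\ran)$ of generator for each such line and applying $\theta$ componentwise produces a map $s:C_\bullet(R^n)\arr\CC_\bullet(R^n)$. Again $s$ commutes with the face maps, since they act only by omission, so $s$ is a chain map, and by construction $\pi\circ s=\id$. Thus $C_\bullet(R^n)$ is a retract of $\CC_\bullet(R^n)$; passing to homology, the identity $\pi\circ s=\id$ gives $H_i(\pi)\circ H_i(s)=\id$, exhibiting $H_i(C_\bullet(R^n))$ as a direct summand of $H_i(\CC_\bullet(R^n))$ for every $i$.

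With the comparison in hand the proof concludes at once: Lemma~\ref{sah-guin} yields $H_i(\CC_\bullet(R^n))=0$ for $-1\leq i\leq(c_n(R)-3)/2$, and a direct summand of the zero group is zero, so $H_i(C_\bullet(R^n))=0$ in the same range. This is exactly the asserted exactness of $C_\bullet(R^n)$.

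The only point requiring genuine care --- and which I expect to be the main obstacle --- is verifying that the two general position conditions correspond precisely under $\lan\cdot\ran$, so that $\pi$ and $s$ really land in the stated complexes; here one uses that a subset of a basis of a free direct summand is again a basis of a free direct summand, together with the scaling-invariance noted above. If one prefers to bypass this comparison, an alternative is to repeat the contracting-homotopy argument of Lemma~\ref{sah-guin} verbatim with lines $\lan v\ran$ in place of the vectors $v$: one fixes a line in general position with the relevant finite set (available precisely because that set has fewer than $c_n(R)$ elements), defines the prepending operators and the invariant $I(z)$ exactly as there, and checks that the same disjointness estimates go through. The stability bound $(c_n(R)-3)/2$ is unaffected, again because $c_n(R)$ is itself scaling-invariant.
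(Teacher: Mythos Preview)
Your proposal is correct. The paper's own proof is the one-line ``This can be proved as Lemma~\ref{sah-guin}'', i.e., exactly your stated alternative of rerunning the contracting-homotopy argument with lines $\lan v\ran$ in place of vectors $v$. Your primary route---exhibiting $C_\bullet(R^n)$ as a retract of $\CC_\bullet(R^n)$ via $\pi$ and a section $s$ built from a choice of generator for each line---is a genuinely different and more economical reduction: it invokes Lemma~\ref{sah-guin} once as a black box rather than reproducing its inductive machinery. The verification you flag as the ``main obstacle'' is indeed routine: the paper's assertion $\CC_\bullet(R^n,\emptyset)=\CC_\bullet(R^n)$ makes explicit that a tuple $(v_0,\dots,v_l)$ lies in $\CC_l(R^n)$ precisely when any $\min\{l+1,n\}$ of the $v_i$ form a basis of a free direct summand, and this condition is manifestly invariant under rescaling each $v_i$ by a unit, so it matches the defining condition for $C_l(R^n)$. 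With that in hand both $\pi$ and $s$ land where claimed, and the retract argument goes through. What your approach buys is brevity and a conceptual explanation for why the two complexes have the same acyclicity range; what the paper's approach buys is independence from that comparison, at the cost of repeating a two-page induction.
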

\begin{proof}
This can be proved as Lemma \ref{sah-guin}.
\end{proof}

In particular, this implies that if $|R/\mmm_R|>9$, then 
$C_\bullet(R^3)$ is exact for $-1 \leq  i \leq 3$ (see Example~\ref{cnR}).
Now as in \cite[\S3]{suslin1991} we can construct a map
\[
\rho: H_3(\GL_3(R)) \arr \ppp(R)
\]
such that the composition $H_3(\GL_2(R)) \arr H_3(\GL_3(R)) \overset{\rho}{\arr} \ppp(R)$
coincides with the map $\varphi:H_3(\GL_2(R)) \arr \ppp(R)$ constructed in above. 

\begin{thm}\label{gl-gm3}
Let $R$ be a local ring such that $|R/\mmm_R|>9$ and $|R/\mmm_R|\neq 16,32$.
%$|R/\mmm_R|\neq 2, 3,4,5,7, 8,9,16,32$.
If $R$ is a domain or is an algebra over a field we may only assume that $|R/\mmm_R|> 9$.
%Let $R$ be a local ring such that its residue field has more than nine elements.
Then we have the exact sequence
\[
H_3(\GM_2(R))\oplus H_3(T_3) \arr H_3(\GL_3(R)) \overset{\rho}{\arr} B(R) \arr 0.
\]
\end{thm}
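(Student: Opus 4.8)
The plan is to carry out the rank-$3$ analogue of the argument proving Theorem \ref{gl-gm}, exploiting the compatibility $\rho\circ\inc_\ast=\varphi$ to reduce everything to the rank-$2$ statement, and using the stability results of Sections \ref{H2} and \ref{H3} to control what is genuinely new in rank $3$. Throughout I write $\psi$ for the map $H_3(\GM_2(R))\oplus H_3(T_3)\arr H_3(\GL_3(R))$ of the statement, induced by the inclusions $\GM_2(R)\se\GL_2(R)\se\GL_3(R)$ and $T_3\se\GL_3(R)$. Since $|R/\mmm_R|>9$, Lemma \ref{kerz1} together with Example \ref{cnR}(iv) shows that $C_\bullet(R^3)$ is exact for $-1\le i\le 3$, so by Lemma \ref{homology-complex} the first quadrant spectral sequence $E_{p,q}^1=H_q(\GL_3(R),C_p(R^3))$ converges to $H_{p+q}(\GL_3(R))$ in total degree $\le 3$. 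Shapiro's lemma and Proposition \ref{affine} identify its columns in this range with $H_q(\GL_1(R)\times\GL_2(R))$ for $p=0$, with $H_q(T_3)$ for $p=1$ and $p=2$, and with $\bigoplus_{(a,b)\in\Bb}H_q(\rr)$ for $p=3$; this is exactly what makes the rank-$2$ subgroup and the diagonal torus $T_3$ the natural generators for $\ker\rho$.

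I would begin with the two easy assertions. Surjectivity of $\rho$ onto $B(R)$ is immediate from $\rho\circ\inc_\ast=\varphi$, the surjectivity of $\varphi$ onto $B(R)$ in Theorem \ref{gl-gm}, and the fact that $\im\rho\se B(R)$ by construction. For $\rho\circ\psi=0$: the summand $H_3(\GM_2(R))$ maps to $H_3(\GL_3(R))$ through $H_3(\GL_2(R))$, so $\rho$ annihilates its image because $\varphi=\rho\circ\inc_\ast$ already annihilates $\im(H_3(\GM_2(R))\arr H_3(\GL_2(R)))$ by the exactness in Theorem \ref{gl-gm}; and $\rho$ annihilates $\im(H_3(T_3)\arr H_3(\GL_3(R)))$ by a chain-level description of $\rho$ in the spirit of Lemmas \ref{b-h} and \ref{b-h00}, the diagonal classes being sent to symmetric combinations that vanish in $\ppp(R)$.

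The substance is the reverse inclusion $\ker\rho\se\im\psi$, and here the compatibility with rank $2$ gives a clean reduction. First, $\inc_\ast(y)\in\ker\rho$ if and only if $\varphi(y)=0$, i.e. if and only if $y\in\im(H_3(\GM_2(R))\arr H_3(\GL_2(R)))$ by Theorem \ref{gl-gm}; hence
\[
\ker\rho\cap\inc_\ast\big(H_3(\GL_2(R))\big)=\im\big(H_3(\GM_2(R))\arr H_3(\GL_3(R))\big).
\]
Consequently it suffices to prove the single containment
\[
\ker\rho\ \se\ \inc_\ast\big(H_3(\GL_2(R))\big)+\im\big(H_3(T_3)\arr H_3(\GL_3(R))\big),
\]
for then any $x\in\ker\rho$ may be written $x=\inc_\ast(y)+t$ with $t\in\im H_3(T_3)$, and $\varphi(y)=\rho(x)-\rho(t)=0$ forces $y\in\im(H_3(\GM_2(R))\arr H_3(\GL_2(R)))$ by Theorem \ref{gl-gm}, whence $x\in\im\psi$. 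To obtain the displayed containment I would analyse the filtration on $H_3(\GL_3(R))$ and the low differentials of the spectral sequence exactly as in \cite[\S2]{mirzaii-mokari2015}: these differentials are the alternating sums of face maps on frames computed as in Section \ref{SP}, and Proposition \ref{hutchinson4} kills the coefficient groups $H_r(\rr,H_m(R^n,\z))$, collapsing the sequence enough that every surviving class in $\ker\rho$ is, modulo $\inc_\ast(H_3(\GL_2(R)))$, represented by a cycle supported on the diagonal torus. Theorem \ref{kernel} is then invoked to account for the $2$-torsion overlap of $\inc_\ast(H_3(\GL_2(R)))$ with $\im H_3(T_3)$.

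The main obstacle is precisely this spectral-sequence analysis of the displayed containment: one must show that no class of $\ker\rho$ beyond the rank-$2$ and toral contributions survives. This is where the numerical hypotheses are consumed. The exactness of $C_\bullet(R^3)$ through degree $3$ requires $|R/\mmm_R|>9$ by Example \ref{cnR}(iv); the identification of the columns via Proposition \ref{affine} and the stability isomorphism $H_3(\GL_3(R))\overset{\simeq}{\arr}H_3(\GL_4(R))$ of Proposition \ref{h3-k3}, needed to match the stable classes with $T_3$, force the exclusion of $16$ and $32$ in general; and the vanishing statements of Proposition \ref{hutchinson4} are what make the comparison with the torus clean. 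When $R$ is a domain or an algebra over a field, the stronger forms of Proposition \ref{hutchinson4}(iii) and Proposition \ref{h3-k3} remove the exclusion of $16,32$, which is why in that case $|R/\mmm_R|>9$ suffices.
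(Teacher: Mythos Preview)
Your reduction is exactly the paper's: once one knows that every class in $H_3(\GL_3(R))$ can be written as $\inc_\ast(y)+t$ with $t$ coming from $T_3$, the exact sequence follows immediately from $\rho\circ\inc_\ast=\varphi$ and Theorem~\ref{gl-gm}. Where you diverge is in how you obtain that decomposition. The paper does not analyse the spectral sequence of $C_\bullet(R^3)$ at all; it simply quotes Lemma~\ref{mir-elb2}(ii), which already gives surjectivity of $H_3(\rr\times\GL_2(R))\to H_3(\GL_3(R))$, and then observes (following \cite[\S3]{suslin1991}, using the K\"unneth decomposition of the source and the known structure of $H_2(\GL_2(R))$) that \emph{all} of $H_3(\GL_3(R))$---not just $\ker\rho$---is generated by $\inc_\ast(H_3(\GL_2(R)))$ together with the cup products $\rr\cup\rr\cup\rr\subseteq\im(H_3(T_3))$. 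That single generation statement replaces your entire spectral-sequence step.

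Two of your invocations are red herrings. Theorem~\ref{kernel} describes $\ker\big(\inc_\ast:H_3(\GL_2)\to H_3(\GL_3)\big)$, which plays no role in the exactness here: you only need the \emph{image} of $\inc_\ast$, and any ambiguity in the lift $y$ is absorbed by Theorem~\ref{gl-gm}. Likewise the stability isomorphism $H_3(\GL_3)\simeq H_3(\GL_4)$ of Proposition~\ref{h3-k3} is not used in the paper's proof of this theorem; the numerical hypotheses enter only through Lemma~\ref{mir-elb2}(ii) (for the generation statement) and through Lemma~\ref{kerz1} with Example~\ref{cnR}(iv) (so that $C_\bullet(R^3)$ is exact in the range needed to define $\rho$). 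Your route via the $C_\bullet(R^3)$-filtration could presumably be pushed through, but as written it trades a one-line citation of Lemma~\ref{mir-elb2}(ii) for a vague computation and two unnecessary appeals.
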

\begin{proof}
Let $T_3:=\rr^3=\rr \times \rr \times \rr$ embeds diagonally in $\GL_3(R)$. Then by
Lemma \ref{mir-elb2}(ii) it is easy to show that $H_3(\GL_3(R))$ is generated by the
images of $H_3(\GL_2(R))$ and $\rr\cup \rr \cup \rr=\im((\rr)^{\otimes 3} \arr H_3(\GL_3(R))$. 
Now one can use Theorem \ref{gl-gm} to prove the claim as it is done in \cite[\S3]{suslin1991}. 
\end{proof}

%%%%%%%%%%%%%%%%%%%%%%%%%%%%%%%%%%%%%%%%%%%%%%%%%%%%%%%%%%%%%%%%%%%%%%%%%%%%%%%%%%%%%%%%%%%%%%%
\section{A Bloch-wigner exact sequence}\label{B-W}
%%%%%%%%%%%%%%%%%%%%%%%%%%%%%%%%%%%%%%%%%%%%%%%%%%%%%%%%%%%%%%%%%%%%%%%%%%%%%%%%%%%%%%%%%%%%%%%

Now we are ready to formulate the main theorem of this article.

\begin{thm}[Bloch-Wigner exact sequence]\label{Bloch-wigner}
Let $R$ be a local ring such that $|R/\mmm_R|>9$ and $|R/\mmm_R|\neq 16,32$.
%$|R/\mmm_R|\neq 2, 3,4,5, 8,9,16,32$.
If $R$ is a domain or is an algebra over a field we may only assume that 
$|R/\mmm_R|> 9$. Then we have the exact sequence
\[
T_R \arr K_3^\ind(R) \arr B(R) \arr 0,
\]
where $T_R$ sits in the short exact sequence
\[
0 \arr \tors(\mu(R),\mu(R))_\sigma \arr T_R \arr
H_1(\Sigma_2, \mu_{2^\infty}(R)\otimes_\z\mu_{2^\infty}(R)) \arr 0.
\]
Moreover, if there is a homomorphism $R \arr F$, $F$ a field, such that the map 
$\mu(R) \arr \mu(F)$ is injective, then we have the exact sequence
\[
0 \arr \tors(\mu(R), \mu(R))^\sim \arr K_3^\ind(R) \arr B(R) \arr 0,
\]
where 
%the group $\tors(\mu(R),\mu(R))^\sim$ is the unique nontrivial extension of
%$\tors(\mu(R),\mu(R))$ by $\z/2$ if $\char(F)\neq 2$ and is equal to 
%$\tors(\mu(R),\mu(R))$ if $\char(F)= 2$. Furthermore 
the composition 
\[
\tors(\mu(R),\mu(R)) \arr \tors(\mu(R), \mu(R))^\sim \arr K_3^\ind(R)
\] 
is induced by the map $\mu(R) \arr \SL_2(R)$,  $\xi \mt \diag(\xi, \xi^{-1})$.
\end{thm}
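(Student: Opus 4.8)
The plan is to descend the right-exact sequence of Theorem~\ref{gl-gm3} to the indecomposable $K$-group. By the homology stability of Proposition~\ref{h3-k3} one has $H_3(\GL_3(R))\simeq H_3(\GL(R))$, and, exactly as in the degree-two computation of Proposition~\ref{mir-stability}(ii), passage to the special linear group followed by the quotient of Proposition~\ref{h3-k3} yields a natural surjection $\pi:H_3(\GL_3(R))\two K_3^\ind(R)=H_3(\SL(R))/T$. First I would verify that $\ker\pi\se\ker\rho$, so that $\rho$ factors as $\bar\rho\circ\pi$ with $\bar\rho:K_3^\ind(R)\two B(R)$ surjective; since $\ker\rho$ is the image of $H_3(\GM_2(R))\oplus H_3(T_3)$ by Theorem~\ref{gl-gm3}, this gives $\ker\bar\rho=\pi\big(\im(H_3(\GM_2(R))\oplus H_3(T_3))\big)$ and reduces everything to computing this image.

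Next I would analyse this image through the $\GM_2$-filtration. The contribution of the diagonal torus $H_3(T_3)$, $T_3=\rr^3$, is purely decomposable: its image under $\pi$ lies in the subgroup $T$ of Proposition~\ref{h3-k3} and so vanishes in $K_3^\ind(R)$, leaving only $H_3(\GM_2(R))$. Writing $H_3(\GM_2(R))\simeq F_2H_3(\GM_2(R))\oplus H_3(\Sigma_2)$ as in (\ref{GM2}) and recalling that the filtration of $F_2H_3(\GM_2(R))=F_1H_3(\GM_2(R))$ coming from ${E'}^2_{p,q}$ has $F_0=H_3(T_2)_\sigma$ and $F_1/F_0\simeq{E'}^2_{1,2}=H_1(\Sigma_2,H_2(T_2))$, I would compute the image of each graded piece in $K_3^\ind(R)$. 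By the K\"unneth formula the exterior and tensor (Milnor and decomposable) summands of $H_3(T_2)_\sigma$ and of $H_2(T_2)$ die in $K_3^\ind(R)$, leaving only the torsion: the $F_0$-part contributes $\tors(\mu(R),\mu(R))_\sigma$, carried by $\mu(R)\arr T_2\se\SL_2(R)$, $\xi\mt\diag(\xi,\xi^{-1})$, and the $F_1/F_0$-part contributes $H_1(\Sigma_2,\mu_{2^\infty}(R)\otimes_\z\mu_{2^\infty}(R))$. This exhibits $T_R:=\ker\bar\rho$ as a filtered group fitting into the asserted short exact sequence.

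Finally, for the refined four-term sequence I would use the hypothesis that $\mu(R)\arr\mu(F)$ is injective for some field $F$. Functoriality of the entire construction in the homomorphism $R\arr F$ produces a commutative ladder comparing the sequence for $R$ with Suslin's Bloch--Wigner sequence over $F$ (or, when $F$ is finite, Hutchinson's \cite{hutchinson2013}); injectivity on roots of unity then forces the composite $\tors(\mu(R),\mu(R))\arr K_3^\ind(R)$ to be injective and pins the extension class of $T_R$ to the nonsplit one, so that $T_R\simeq\tors(\mu(R),\mu(R))^\sim$ and the sequence becomes $0\arr\tors(\mu(R),\mu(R))^\sim\arr K_3^\ind(R)\arr B(R)\arr0$ with the stated description of the composite. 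The hard part is the precise determination of the image of $H_3(\GM_2(R))$ in $K_3^\ind(R)$: isolating the $2$-primary quotient $H_1(\Sigma_2,\mu_{2^\infty}(R)\otimes_\z\mu_{2^\infty}(R))$ and deciding the extension type of $T_R$. Here Theorem~\ref{kernel}, which shows that the kernel of $H_3(\GL_2(R))\arr H_3(\GL_3(R))$ is a $2$-torsion group of very restricted form, is the essential tool for controlling the $2$-primary behaviour; this is exactly where the finite residue field case departs from the rings-with-many-units case of \cite{mirzaii-mokari2015}, being governed by the subtle $\Sigma_2$-homology with $2$-power torsion coefficients.
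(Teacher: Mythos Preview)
Your overall strategy is correct and matches the paper's approach, which simply cites \cite[Theorem~5.1]{mirzaii-mokari2015}: start from the right-exact sequence of Theorem~\ref{gl-gm3}, pass to $K_3^\ind(R)$ via the homology stability of Proposition~\ref{h3-k3}, kill the diagonal and decomposable contributions, and identify what remains of $H_3(\GM_2(R))$ through the $\Sigma_2$-filtration. The comparison with a field for the injectivity statement is also the right idea.

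There is, however, one imprecision worth flagging. You set $T_R:=\ker\bar\rho$ and then try to read off its filtration from the images of the graded pieces of $F_2H_3(\GM_2(R))$. That argument only shows that $\ker\bar\rho$ is the \emph{image} of something carrying the desired filtration; it does not by itself give a short exact sequence for $\ker\bar\rho$. The paper avoids this by defining $T_R$ intrinsically as the quotient $F_2H_3(\GM_2(R))/H_3(T_2)$ (see the remark immediately following the theorem); then Lemma~\ref{M-K} yields the short exact sequence
\[
0 \arr \tors(\mu(R),\mu(R))_\sigma \arr T_R \arr H_1(\Sigma_2,\mu_{2^\infty}(R)\otimes_\z\mu_{2^\infty}(R)) \arr 0
\]
directly, and all one has to check is that the map $T_R\to K_3^\ind(R)$ is well-defined and has image $\ker\bar\rho$. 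The first exact sequence in the theorem is three-term precisely because $T_R\to K_3^\ind(R)$ is not asserted to be injective in general. With this adjustment your argument goes through.

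A minor remark: your invocation of Theorem~\ref{kernel} as ``the essential tool for controlling the $2$-primary behaviour'' is not needed here. Since you are working through $H_3(\GL_3(R))$ via Theorem~\ref{gl-gm3} rather than $H_3(\GL_2(R))$, the kernel of $H_3(\GL_2(R))\to H_3(\GL_3(R))$ never enters; the $2$-primary structure of $T_R$ is handled entirely by Lemma~\ref{M-K} and, in the refined case, by the field comparison.
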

\begin{proof}
This can be done as the proof of \cite[Theorem~5.1]{mirzaii-mokari2015}
\end{proof}

Note that in the above theorem, $T_R$ can be defined as
\[
T_R:=F_2H_3(\GM_2(R))/H_3(T_2)_\sigma=F_2H_3(\GM_2(R))/H_3(T_2),
\]
and we have the following lemma.

\begin{lem}\label{M-K}
Let $R$ be any commutative ring. Then $T_R$ sits in the short exact sequence
\[
0 \arr \tors(\mu(R),\mu(R))_\sigma \arr T_R \arr
H_1(\Sigma_2, \mu_{2^\infty}(R)\otimes \mu_{2^\infty}(R)) \arr 0.
\]
Moreover, if we have a homomorphism $R \arr F$, $F$ a field, such that 
$\mu(R) \arr \mu(F)$ is injective, then $T_R \simeq \tors(\mu(R),\mu(R))^\sim$.
\end{lem}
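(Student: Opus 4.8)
The plan is to read off both ends of the sequence from the Lyndon--Hochschild--Serre spectral sequence ${E'}^2_{p,q}=H_p(\Sigma_2,H_q(T_2))\Rightarrow H_{p+q}(\GM_2(R))$ of the extension $1\arr T_2\arr\GM_2(R)\arr\Sigma_2\arr1$, together with the K\"unneth decomposition of $H_q(T_2)=H_q(\rr\times\rr)$. First I would record the $\Sigma_2$-module structure of the relevant homology. Since $T_2\simeq\rr\times\rr$ and $\sigma$ interchanges the two factors, K\"unneth gives $H_2(T_2)\simeq H_2(\rr)^{\oplus2}\oplus(\rr\otimes\rr)$ and $H_3(T_2)\simeq H_3(\rr)^{\oplus2}\oplus\big(H_2(\rr)\otimes\rr\big)^{\oplus2}\oplus\tors(\rr,\rr)$, where the last summand is the K\"unneth Tor term. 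On the cross summands $\sigma$ acts through the graded commutativity of the Pontryagin product, i.e.\ by the signed transposition $a\otimes b\mapsto-b\otimes a$ on $\rr\otimes\rr$ and by the induced involution on $\tors(\rr,\rr)$.

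For the right-hand (quotient) term I would note that $H_q(\rr)^{\oplus2}$ and $\big(H_2(\rr)\otimes\rr\big)^{\oplus2}$ are freely permuted by $\sigma$, hence are induced $\Sigma_2$-modules and die in $H_{\geq1}(\Sigma_2,-)$ by Shapiro's lemma; therefore ${E'}^2_{1,2}=H_1(\Sigma_2,H_2(T_2))\simeq H_1(\Sigma_2,\rr\otimes\rr)$. Writing $\rr$ as the filtered colimit of its finitely generated subgroups, each of the form $\z^{r}\oplus(\text{finite})$, and using that homology commutes with filtered colimits, the free part (a signed tensor square of a free group) and the mixed part (an induced module) again contribute nothing, so this reduces to $H_1(\Sigma_2,\mu(R)\otimes\mu(R))$. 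Finally the primary decomposition of $\mu(R)$ and the vanishing of $H_n(\Sigma_2,-)$, $n\geq1$, on odd torsion isolate $H_1(\Sigma_2,\mu_{2^\infty}(R)\otimes\mu_{2^\infty}(R))$.

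The left-hand (sub) term comes from the indecomposable summand $\tors(\rr,\rr)\subseteq H_3(T_2)$. Because $\tors$ depends only on the torsion subgroups, $\tors(\rr,\rr)=\tors(\mu(R),\mu(R))$, and after passing to $\Sigma_2$-coinvariants this is $\tors(\mu(R),\mu(R))_\sigma$; tracking this class through the edge homomorphism $H_3(T_2)_\sigma\arr H_3(\GM_2(R))$ and the definition $T_R=F_2H_3(\GM_2(R))/H_3(T_2)_\sigma$ accounts for it as the asserted subgroup. Assembling the two computations, and using the collapse information already recorded for ${E'}$ (namely ${E'}^\infty_{2,1}=0$ and the splitting of $H_3(\Sigma_2)$ off $H_3(\GM_2(R))$) together with the explicit cross-term cycles of Lemmas~\ref{b-h} and~\ref{b-h00}, yields the short exact sequence $0\arr\tors(\mu(R),\mu(R))_\sigma\arr T_R\arr H_1(\Sigma_2,\mu_{2^\infty}(R)\otimes\mu_{2^\infty}(R))\arr0$.

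For the last assertion, assume $R\arr F$ induces an injection $\mu(R)\harr\mu(F)$. Every finite subgroup of $\mu(F)$ is cyclic and $\mu_{2^\infty}(F)$ is cyclic or of Pr\"ufer type $\z/2^\infty$, so $\mu_{2^\infty}(R)$ is cyclic or Pr\"ufer; a direct computation then gives $H_1(\Sigma_2,\mu_{2^\infty}(R)\otimes\mu_{2^\infty}(R))\simeq\z/2$ in the nontrivial finite case and $0$ in the Pr\"ufer case, so the sequence exhibits $T_R$ as an extension of $\z/2$ (or $0$) by $\tors(\mu(R),\mu(R))_\sigma$. The hard part is to identify the extension class, i.e.\ to show it is the unique non-split one so that $T_R\simeq\tors(\mu(R),\mu(R))^\sim$ rather than the split sum. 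I would settle this as in \cite{suslin1991} and \cite{mirzaii-mokari2015}: using Lemma~\ref{b-h} to represent the generator of the $\z/2$-quotient by an explicit $3$-cycle and checking that its order in $F_2H_3(\GM_2(R))$ is twice that of the matching element of $\tors(\mu(R),\mu(R))_\sigma$; the cyclic (respectively Pr\"ufer) shape of $\mu_{2^\infty}(R)$ forced by the injection $\mu(R)\harr\mu(F)$ is exactly what makes this extension class nonzero.
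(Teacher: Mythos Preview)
The paper does not prove this lemma here; it simply cites \cite[Section~4, Lemma~4.4, Corollary~4.5]{mirzaii-mokari2015}. Your overall framework---the Lyndon--Hochschild--Serre spectral sequence for $1\to T_2\to\GM_2(R)\to\Sigma_2\to1$ combined with the K\"unneth decomposition of $H_q(T_2)$---is exactly the setup the paper records in Section~\ref{bloch-GL2}, so you are working in the right arena.

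There is, however, a genuine gap in your third paragraph. You assert that the subgroup $\tors(\mu(R),\mu(R))_\sigma$ of $T_R$ arises from the K\"unneth Tor summand $\tors(\rr,\rr)\subseteq H_3(T_2)$, pushed forward along the edge homomorphism $H_3(T_2)_\sigma\to H_3(\GM_2(R))$. But the image of that edge map is precisely $F_0H_3(\GM_2(R))$, and by definition $T_R=F_2H_3(\GM_2(R))/F_0H_3(\GM_2(R))$. Anything coming from $H_3(T_2)$ therefore dies in $T_R$; it cannot supply a nontrivial subgroup. As you yourself note, ${E'}^\infty_{2,1}=0$ and ${E'}^\infty_{1,2}={E'}^2_{1,2}$, so $T_R\simeq {E'}^2_{1,2}=H_1(\Sigma_2,H_2(T_2))$, and after discarding the induced summand $H_2(\rr)^{\oplus2}$ this is $H_1(\Sigma_2,\rr\otimes\rr)$ with the signed flip. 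Both pieces of the asserted short exact sequence---the $\tors(\mu(R),\mu(R))_\sigma$ subgroup \emph{and} the $H_1(\Sigma_2,\mu_{2^\infty}(R)\otimes\mu_{2^\infty}(R))$ quotient---have to be read off from an internal analysis of this single $\Sigma_2$-module $\rr\otimes\rr$ and the long exact sequence in $H_\ast(\Sigma_2,-)$ attached to a suitable filtration of it. That is the computation carried out in \cite{mirzaii-mokari2015}; it is not the argument you wrote, and the Tor summand of $H_3(T_2)$ plays no role in it.

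Consequently your ``assembling the two computations'' sentence does not assemble: you have a candidate for the quotient but no candidate for the subgroup, so the short exact sequence is not established. The final paragraph on the extension class then rests on this unproved sequence.
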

\begin{proof}
See \cite[Section~4, Lemma~4.4, Corollary~4.5]{mirzaii-mokari2015}.
\end{proof}

\begin{cor}\label{B-B}
Let $R$ be a discrete valuation ring with field of fraction $K$ and 
residue field $F=R/\mmm_R$ with more than nine elements. Suppose that 
either $\char(K)=\char(F)$ or $F$ is finite. Then the natural map 
$B(R) \arr B(K)$ is an isomorphism.
\end{cor}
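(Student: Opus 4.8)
The plan is to compare the Bloch--Wigner exact sequences of $R$ and of $K$ and then invoke the five lemma. Since $R$ is a domain with $|R/\mmm_R|>9$, Theorem~\ref{Bloch-wigner} applies to $R$; and since $K$ is a field, hence a domain, with $\mmm_K=0$ and $|K|=\infty>9$, the same theorem applies to $K$. Thus we have short exact sequences
\[
0 \arr \tors(\mu(R),\mu(R))^\sim \arr K_3^\ind(R) \arr B(R) \arr 0,
\]
\[
0 \arr \tors(\mu(K),\mu(K))^\sim \arr K_3^\ind(K) \arr B(K) \arr 0.
\]
The inclusion $R \harr K$ induces compatible maps $\GL_n(R)\arr \GL_n(K)$, $\ppp(R)\arr\ppp(K)$, $\mu(R)\arr\mu(K)$ and $K_3^\ind(R)\arr K_3^\ind(K)$, and by naturality of the whole construction (in particular of the maps produced in Theorem~\ref{Bloch-wigner} and Lemma~\ref{M-K}) these assemble into a commutative ladder between the two sequences. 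Hence it suffices to prove that the two leftmost vertical maps are isomorphisms.

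First I would dispose of the outer term. Any root of unity $\xi\in K^\times$ satisfies $\xi^n=1$ for some $n$, so it has valuation zero and $\xi\in\rr$; conversely $\rr\se K^\times$. Therefore $\mu(R)=\mu(K)$, and since the extension $(-)^\sim$ is functorial, the induced map $\tors(\mu(R),\mu(R))^\sim \arr \tors(\mu(K),\mu(K))^\sim$ is an isomorphism. This reduces the whole statement to showing that the natural map $K_3^\ind(R) \arr K_3^\ind(K)$ is an isomorphism, and this is the only place where the extra hypothesis on $R$ is needed.

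For this step I would use the Quillen localization sequence for the regular ring $R$, whose closed point has residue field $F$ and whose open point is $\mathrm{Spec}(K)$,
\[
K_3(F) \overset{i_\ast}{\larr} K_3(R) \overset{j^\ast}{\larr} K_3(K) \overset{\partial}{\larr} K_2(F),
\]
together with the tame-symbol localization sequence for Milnor $K$-theory of the discrete valuation ring $R$,
\[
0 \arr K_3^M(R) \arr K_3^M(K) \overset{\partial^M}{\larr} K_2^M(F) \arr 0,
\]
the compatibility of $\partial$ with $\partial^M$ under $K_3^M\arr K_3$, and Matsumoto's theorem $K_2^M(F)=K_2(F)$. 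Passing to the cokernels $K_3^\ind=\coker(K_3^M\arr K_3)$, surjectivity of $K_3^\ind(R)\arr K_3^\ind(K)$ is then unconditional: given $x\in K_3(K)$, choose $m\in K_3^M(K)$ with $\partial^M(m)=\partial(x)$; subtracting the image of $m$ moves $x$ into $\im(j^\ast)$ modulo decomposable classes, so its class comes from $K_3^\ind(R)$. A diagram chase through the same ladder identifies the kernel of $K_3^\ind(R)\arr K_3^\ind(K)$ with the image of the transfer $i_\ast\colon K_3(F)\arr K_3(R)$ in $K_3^\ind(R)$. Thus injectivity is equivalent to the vanishing of the residue-field transfer $i_\ast(K_3(F))$ modulo decomposable classes.

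The hard part is precisely this vanishing, and it is exactly where the dichotomy $\char(K)=\char(F)$ or $F$ finite enters. When $F$ is finite one has $K_2(F)=0$, so $j^\ast$ is already surjective at the level of $K_3$, while $K_3(F)$ is finite cyclic, and the self-intersection (projection) formula for the codimension-one immersion forces the image of $i_\ast$ into the decomposable part. In the equicharacteristic case $\char(K)=\char(F)$ the presence of a common prime field, and the resulting specialization splitting of the localization sequence, again pushes $i_\ast(K_3(F))$ into the image of $K_3^M(R)$. Making this vanishing precise is the main obstacle of the argument; once it is established, $K_3^\ind(R)\arr K_3^\ind(K)$ is an isomorphism and the five lemma applied to the ladder yields that $B(R)\arr B(K)$ is an isomorphism.
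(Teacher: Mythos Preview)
Your overall framework matches the paper's proof exactly: write down the Bloch--Wigner short exact sequences for $R$ and for $K$ via Theorem~\ref{Bloch-wigner} (both are domains with residue field of size $>9$), observe that $\mu(R)=\mu(K)$ so the left-hand terms agree, and finish by the five lemma once $K_3^\ind(R)\to K_3^\ind(K)$ is known to be an isomorphism.

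The divergence is in how you treat that isomorphism. The paper does not reprove it: it simply invokes \cite[Theorem~2.1]{hutchinson2014}, where $K_3^\ind(R)\simeq K_3^\ind(K)$ is established for a DVR under exactly the hypothesis ``$\char(K)=\char(F)$ or $F$ finite''. You instead try to derive it from the Quillen localization sequence and the tame symbol. Your reduction of injectivity to the statement that $i_\ast(K_3(F))$ lands in the decomposable part of $K_3(R)$ is correct in outline, but the arguments you offer for that vanishing do not work. The self-intersection formula computes $i^\ast i_\ast$, not $i_\ast$; for a trivial normal bundle it gives $i^\ast i_\ast=0$, which tells you nothing about whether $i_\ast(K_3(F))$ sits inside the image of $K_3^M(R)$. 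The equicharacteristic sketch (``specialization splitting'') is in the right spirit but is not a proof as written. You yourself call this step the ``main obstacle'' and leave it open, so the proposal is genuinely incomplete at the one point that carries the content of the corollary.

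If you want to close the gap along your lines, the statement lurking behind the dichotomy in the hypothesis is Gersten's conjecture for the DVR $R$, which forces $i_\ast=0$ outright; but that is itself a substantial theorem and is precisely what Hutchinson's cited result packages for you. The intended fix here is to cite \cite[Theorem~2.1]{hutchinson2014} rather than to redo it.
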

\begin{proof}
By \cite[Theorem 2.1]{hutchinson2014} $K_3^\ind(R)\simeq  K_3^\ind(K)$. Thus the 
claim follows from the Bloch-Wigner exact sequence for $R$ and $K$, obtained from 
Theorem~\ref{Bloch-wigner}. Note that here $\mu(R)=\mu(K)$.
\end{proof}

If $F$ is a finite field such that $|F|>9$, then by Theorem \ref{Bloch-wigner}
we have the Bloch-Wigner exact sequence
\[
0 \arr \tors(\mu(F), \mu(F))^\sim \arr K_3^\ind(F) \arr B(F) \arr 0.
\]
But by a direct approach we can prove a better result with easier arguments.

\begin{prp}\label{mirzaii3}
Let $F$ be a finite field with at least four elements. 
\par {\rm (i)} If $F\neq \F_4,\F_8$, then we have the exact sequence
\[
0 \arr \tors(\mu(F),\mu(F))^\sim \arr H_3(\SL_2(F))_{F^\times} \arr B(F) \arr 0.
\]
If $F=\F_4$ or $\F_8$, then we have the exact sequence
 \[
0 \arr \tors(\mu(F),\mu(F))\oplus \z/2 \arr H_3(\SL_2(F))_{F^\times} 
\arr B(F) \arr 0.
\]
\par {\rm (ii)} If $F\neq \F_4,\F_8$, then 
$H_3(\SL_2(F))_{F^\times}\simeq  K_3^\ind(F)$ and if $F=\F_4$ or $\F_8$
then $H_3(\SL_2(F))_{F^\times}\simeq  K_3^\ind(F)\oplus \z/2$.
Moreover, we have the Bloch-Wigner exact sequence
\[
0 \arr \tors(\mu(F), \mu(F))^\sim \arr K_3^\ind(F) \arr B(F) \arr 0.
\]
\end{prp}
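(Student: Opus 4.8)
The plan is to transport the spectral--sequence computation behind Theorems~\ref{gl-gm} and~\ref{Bloch-wigner} from $\GL_2$ to the $F^\times$--coinvariants $H_3(\SL_2(F))_{F^\times}$, exploiting that over a finite field every group entering the computation was pinned down explicitly in Examples~\ref{finite-fields}, \ref{sln-finite-fields} and~\ref{p-sylow}. The bridge between the two pictures is the split extension $1\arr \SL_n(F)\arr \GL_n(F)\arr F^\times\arr 1$: since $|F^\times|=q-1$ is prime to $p=\char(F)$ and $H_1(\SL_n(F))=0$, its Lyndon--Hochschild--Serre spectral sequence degenerates enough to give $H_3(\GL_n(F))\simeq H_3(\GL_1(F))\oplus H_3(\SL_n(F))_{F^\times}$, the first summand splitting off through the diagonal $F^\times\harr \GL_n(F)$; the cross terms $H_p(F^\times,H_q(\SL_n(F)))$ with $p,q>0$ vanish because $H_2(\SL_n(F))=0$ for $n\geq 3$ and the finitely many small cases drop out on order grounds. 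I would also record at the outset that $K_3^\ind(F)=H_3(\SL(F))$, since $K_2^M(F)=K_3^M(F)=0$ forces the subgroup $T$ of Proposition~\ref{h3-k3} to vanish.

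For part~(i) with $F\neq \F_2,\F_3,\F_4,\F_8$, Theorem~\ref{gl-gm} already supplies the exact sequence $H_3(\GM_2(F))\arr H_3(\GL_2(F))\overset{\varphi}{\arr} B(F)\arr 0$. Feeding this through the bridge, and tracking the monomial part by means of Lemmas~\ref{b-h00}, \ref{b-h} and the decomposition~(\ref{GM2}), the map $\varphi$ descends to a surjection $H_3(\SL_2(F))_{F^\times}\two B(F)$ whose kernel is the image of the torus/monomial term $T_F:=F_2H_3(\GM_2(F))/H_3(T_2)$. Here Lemma~\ref{M-K} does the decisive work: the identity $F\arr F$ has $\mu(F)\arr \mu(F)$ injective, so $T_F\simeq \tors(\mu(F),\mu(F))^\sim$, giving the generic sequence $0\arr \tors(\mu(F),\mu(F))^\sim \arr H_3(\SL_2(F))_{F^\times}\arr B(F)\arr 0$. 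In characteristic $2$ one has $\mu_{2^\infty}(F)=0$, so the second term in Lemma~\ref{M-K} is trivial and $\tors(\mu(F),\mu(F))^\sim=\tors(\mu(F),\mu(F))$.

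The exceptional fields $\F_4$ and $\F_8$ lie outside Theorem~\ref{gl-gm} precisely because the identifications $H_i(B_2)\simeq H_i(T_2)$ of Proposition~\ref{affine} fail there; by Example~\ref{finite-fields} the defect is an extra $\z/2$ in $H_3(B_2(\F_q))$. I would run the spectral sequence $E_{p,q}^1$ for these two fields by hand, carrying this $\z/2$ along: it lands in the kernel of the map to $B(F)$ (note $B(\F_4)$ and $B(\F_8)$ have odd order, being prime to $p=2$) and hence enlarges $T_F$ to $\tors(\mu(F),\mu(F))\oplus \z/2$, yielding the second sequence of~(i). Invariantly, this $\z/2$ is the $p$--primary part of $H_3(\GL_2(F))$ computed in Example~\ref{p-sylow}: being prime to $q-1$ it survives into $H_3(\SL_2(F))_{F^\times}$, yet it is disjoint from the group $\tors(\mu(F),\mu(F))^\sim$, which is odd in this characteristic.

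For part~(ii) I would identify $H_3(\SL_2(F))_{F^\times}$ with $K_3^\ind(F)=H_3(\SL(F))$ by homology stability. Theorem~\ref{kernel} shows $\ker(H_3(\GL_2(F))\arr H_3(\GL_3(F)))\se F^\times\cup H_2(\GL_1(F))$, and since $F^\times$ is cyclic we have $H_2(\GL_1(F))=H_2(F^\times)=0$; hence this map is injective, and it is onto by Lemma~\ref{mir-elb2}(ii) once $F\neq \F_2,\F_3,\F_4,\F_8$. Combined with the stabilization $H_3(\GL_3(F))\simeq H_3(\GL_4(F))\simeq\cdots$ of Proposition~\ref{h3-k3} and the bridge, this gives $H_3(\SL_2(F))_{F^\times}\simeq K_3^\ind(F)$ for $F$ in the stability range, and the Bloch--Wigner sequence for $K_3^\ind(F)$ follows by substituting into~(i); for $\F_4,\F_8$ the same comparison shows the stabilization map is onto with kernel exactly the $p$--primary $\z/2$ (which dies stably), whence $H_3(\SL_2(F))_{F^\times}\simeq K_3^\ind(F)\oplus \z/2$. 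The hard part will be the bookkeeping at the smallest fields: $\F_5$ is left uncovered by Proposition~\ref{h3-k3} (its $c_4$--bound is too small for $H_3$--stability past rank $3$) and must be settled from explicit knowledge of $H_3(\SL(\F_5))$, while at $\F_4,\F_8$ one must keep the $p$--primary and prime--to--$p$ parts rigorously separate through the chain from $\GL_2$ to $\SL_2$--coinvariants to the stable group, and verify that the extra $\z/2$ is genuinely absent from both $\tors(\mu(F),\mu(F))^\sim$ and $K_3^\ind(F)$.
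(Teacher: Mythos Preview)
Your overall architecture matches the paper's proof closely: the same bridge via the Lyndon--Hochschild--Serre sequence of $1\arr\SL_2\arr\GL_2\arr F^\times\arr 1$, the same reduction to Theorem~\ref{gl-gm} and Lemma~\ref{M-K} for the generic case, the same handling of the extra $\z/2$ at $\F_4,\F_8$ via Examples~\ref{finite-fields} and~\ref{p-sylow}, and the same stability argument (Theorem~\ref{kernel} plus $H_2(F^\times)=0$, then Lemma~\ref{mir-elb2}(ii), then Proposition~\ref{h3-k3}) for part~(ii).

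There is, however, one genuine gap. In part~(i) you conclude ``giving the generic sequence $0\arr\tors(\mu(F),\mu(F))^\sim\arr H_3(\SL_2(F))_{F^\times}\arr B(F)\arr 0$'' immediately after identifying the kernel of $\varphi$ as the \emph{image} of $T_F$. Knowing that the kernel is $\im(T_F)$ and that $T_F\simeq\tors^\sim$ does not tell you that $T_F$ injects; a priori the map $T_F\arr H_3(\SL_2(F))_{F^\times}$ could have nontrivial kernel. The paper supplies a nontrivial argument here: it embeds $\tors(\mu(F),\mu(F))^\sim$ into $\tors(\mu(\overline{F}),\mu(\overline{F}))^\sim=\tors(\mu(\overline{F}),\mu(\overline{F}))$, uses $H_3(\SL_2(\overline{F}))\simeq K_3^\ind(\overline{F})$, and then invokes Suslin's theorem \cite[Theorem~5.2]{suslin1991} that $\tors(\mu(\overline{F}),\mu(\overline{F}))\arr K_3^\ind(\overline{F})$ is injective. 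This external input is essential and is missing from your outline. The same injectivity must also be checked for the $\tors$ summand in the $\F_4,\F_8$ case.

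A smaller point: for $\F_4,\F_8$ in part~(ii), your proposed argument (``the same comparison shows the stabilization map is onto with kernel exactly the $p$--primary $\z/2$'') cannot be run, because Lemma~\ref{mir-elb2}(ii) and Theorem~\ref{kernel} both exclude these fields. The paper instead settles these two cases by direct computation: $H_3(\SL_2(\F_4))\simeq\z/30$, $H_3(\SL_2(\F_8))\simeq\z/126$, compared with Quillen's $K_3(\F_q)\simeq\z/(q^2-1)$. Finally, your worry about $\F_5$ dissolves once you use the field clause of Theorem~\ref{sah-stability}(i), which drops the $c_{n+1}(R)$ constraint; for $\F_5$ one has $(p-1)d-1=3$, so $H_3(\GL_3(\F_5))\simeq H_3(\GL_4(\F_5))$ holds.
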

\begin{proof}
%Let $F\neq \F_2,\F_3$.
Since $F^\times$ is a finite cyclic group, $H_2(\GL_1(F))=0$. 
%Thus $\H_3(\SL_2(F))=H_3(\GL_2(F))/H_3(\GL_1(F))$. 
If $|F|> 4$, we have $H_1(\SL_2(F))=0$, $H_2(\SL_2(F))_{F^\times}=0$ 
(see Example \ref{sln-finite-fields}). Hence by Lemma \ref{dwyer},
$H_r(F^\times, H_s(\SL_2(F)))=0$ for all $r\geq 0$ and for $s=1,2$. 
If $F=\F_4$,  we have $H_1(\SL_2(F))=0$ and $H_2(\SL_2(F))=\z/2$.
Since $\F_4^\times={\F_4^\times}^2$, the action of $F^\times$ on 
$H_s(\SL_2(F))$ is trivial. 
These together  with the universal coefficient theorem imply that 
$H_r(F^\times, H_s(\SL_2(F))=0$ for any $r \geq 1$ and $s=1,2$.
Now by an easy analysis of the corresponding Lyndon-Hochschild-Serre 
spectral sequence of the extension 
\[
1 \arr  \SL_2(F) \arr \GL_2(F) \arr F^\times \arr 1,
\]
%for any $F$ with at least four elements, 
we obtain the isomorphism  
\[
H_3(\SL_2(F))_{F^\times}\simeq H_3(\GL_2(F))/H_3(\GL_1(F)). 
\]

\par (i) First let $F\neq \F_4, \F_8$.
From the proof of Theorem \ref{gl-gm} we see that we have
the exact sequence
\[
F_2H_3(\GM_2(F)) \arr H_3(\GL_2(F)) \arr B(F) \arr 0.
\]
Let $M:=H_3(F^\times) \oplus H_3(F^\times) \se H_3(T_2)$. From the commutative diagram
\[
\begin{CD}
M & \hspace{-6 mm} \overset{=}{-\!\!\!-\!\!\!-\!\!\!-\!\!\!-\!\!\!\larr} & M  & \\
@VVV  @VVV        &        \\
F_2H_3(\GM_2(F)) & \!\!\!\larr &\!\! H_3(\GL_2(F)) & \arr  & B(F) & \arr 0,
\end{CD}
\]
we obtain the exact sequence $T_F \arr H_3(\SL_2(F))_{F^\times} \arr B(F) \arr 0$.
By Lemma \ref{M-K}, $T_F\simeq \tors(\mu(F),\mu(F))^\sim$. To finish the proof of 
the proposition we have to prove that the map
\[
\tors(\mu(F), \mu(F))^\sim \arr H_3(\SL_2(F))_{F^\times}
\]
is injective.  Since 
\[
\tors(\mu(F), \mu(F))^\sim \arr \tors(\mu(\overline{F}), \mu(\overline{F}))^\sim
=\tors(\mu(\overline{F}), \mu(\overline{F}))
\]
is injective, it is sufficient to prove that
\[
\tors(\mu(\overline{F}), \mu(\overline{F}))^\sim \arr H_3(\SL_2(\overline{F}))
\]
is injective. Since $H_3(\SL_2(\overline{F}))\simeq K_3^\ind(\overline{F})$
\cite[Corollary~5.4]{mirzaii2011}, \cite[Proposition~6.4]{mirzaii-2008}
it is sufficient to prove that 
\[
\tors(\mu(\overline{F}), \mu(\overline{F})) \arr K_3^\ind(\overline{F})
\]
is injective. But this has been proved by Suslin \cite[Theorem 5.2]{suslin1991}, 
thus the result follows. 

Now let $F= \F_4$, or $\F_8$. Then $H_3(B_2(F))\simeq H_3(T_2(F)) \oplus \z/2$ 
(see Example \ref{finite-fields}). Now from the filtration of $H_3(\GL_2(F))$
induced by the spectral sequence $E_{p,q}^1$, we obtain the exact sequences
\[
0 \arr F_2H_3(\GL_2(F))\arr H_3(\GL_2(F))\arr B(F) \arr 0,
\]
\[
H_3(B_2(F)) \arr F_2H_3(\GL_2(F)) \arr E_{2,1}^2\arr 0, 
\]
which imply the exact sequences
\[
0 \arr F_2H_3(\GL_2(F))/M\arr H_3(\SL_2(F))_{F^\times}\arr B(F) \arr 0,
\]
\[
\tors(\mu(F), \mu(F)) \oplus \z/2 \arr F_2H_3(\GL_2(F))/M \arr E_{2,1}^2\arr 0. 
\]
By Lemmas \ref{b-h00} and \ref{b-h}, ${E'}_{2,1}^2 \arr E_{2,1}^2$ is surjective. But
\[
{E'}_{2,1}^2\simeq H_1(\Sigma_2, \mu_{2^\infty}(F) \otimes \mu_{2^\infty}(F))=0.
\] Thus we have the exact sequence
\[
\tors(\mu(F), \mu(F)) \oplus \z/2 \arr H_3(\SL_2(F))_{F^\times}\arr B(F) \arr 0.
\]
Similar to the above, we can show that the natural homomorphism
\[
\tors(\mu(F), \mu(F)) \arr H_3(\SL_2(F))_{F^\times}
\]
is injective. The map 
$\z/2 \arr H_3(\SL_2(\F_8))_{{\F_8}^\times}$ is the composition map 
\[
\begin{array}{c}
\z/2\simeq \bigwedge_\z^3 \F_8^\times \arr H_3(\F_8)\simeq H_3(N(\F_8)) \arr 
H_3(\SL_2(\F_8))_{\F_8^\times}
\end{array}
\]
and the map $\z/2 \arr H_3(\SL_2(\F_4))_{{\F_4}^\times}$ is the composition map 
\[
\begin{array}{c}
\z/2\simeq (\F_4^\times\otimes \F_4^\times)^{-\sigma} 
\simeq H_3(\F_4)_{\F_4^\times}\simeq H_3(N(\F_4)) \arr 
H_3(\SL_2(\F_4))_{\F_4^\times}
\end{array}
\]
which both maps are injective by Example \ref{p-sylow}. This completes the proof 
of (i).

(ii) Let $F\neq \F_4, \F_8$. Then by Theorem \ref{kernel}, the kernel of the map 
$\inc_\ast: H_3(\GL_2(F)) \arr H_3(\GL_3(F))$  consists of elements of the form
$x={\begin{array}{c}\!\!\sum\!\!\end{array}}{\rm\bf{c}}(\diag(a,1),\diag(1,b),\diag(c,c^{-1}))$
provided that 
\[
{\begin{array}{c} \!\! \sum \!\! \end{array}}
a\otimes \{b, c\}+b\otimes \{a, c\}=0 \in F^\times \otimes K_2^M(F).
\]
Since 
$x\in \im(H_2(F^\times)\otimes F^\times \arr H_3(\GL_2(F)))$ and $H_2({F^\times})=0$, 
the kernel of $\inc_\ast$ is trivial.  On the other hand 
from the exact sequence of Lemma \ref{mir-elb2}(ii), one sees easily that $\inc_\ast$
is surjective. These together with the homology stability of Proposition \ref{h3-k3}, 
proves that
\[
H_3(\GL_2(F)) \overset{\simeq}{\larr} H_3(\GL_3(F)) \overset{\simeq}{\larr} 
H_3(\GL_4(F)) \overset{\simeq}{\larr} \cdots.
\]
Finally, we have
\begin{align*}
H_3(\SL_2(F))_{F^\times} &\simeq H_3(\GL_2(F))/H_3(\GL_1(F))\\
&\simeq H_3(\GL(F))/H_3(\GL_1(F))\\
&\simeq H_3(\SL(F))\\
&\simeq K_3(F)\\
&=K_3^\ind(F).
\end{align*}
Now let $F= \F_4$, or $F=\F_8$. Since $F^\times={{F^\times}}^2$, we have
$H_3(\SL_2(F))_{F^\times}=H_3(\SL_2(F))$. But by a direct calculation one can show that
$H_3(\SL_2(\F_4))\simeq \z/30$ and $H_3(\SL_2(\F_8))\simeq \z/126$. Moreover, by
Quillen's calculation of the $K$-groups of finite fields, we have
$K_3^\ind(\F_q)=K_3(\F_q)\simeq \z/(q^2-1)$. Thus $K_3^\ind(\F_4)\simeq \z/15$ and 
$K_3^\ind(\F_8)\simeq \z/63$. Therefore $H_3(\SL_2(F))_{F^\times}\simeq K_3^\ind(\F_q)\oplus \z/2$.
(See Example \ref{sln-finite-fields2} below for another proof of this part.) 
Finally, the Bloch-Wigner exact sequence follows from (i).
\end{proof}

\begin{cor}
The Bloch group of a finite field $\F_q$ with at least four elements is
\[
B(\F_q)\simeq \begin{cases}
\z/(q+1) & \text{if $q$ is even}\\
\z/(\frac{q+1}{2})& \text{if $q$ is odd.}
\end{cases}
\]
\end{cor}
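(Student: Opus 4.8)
The plan is to read the result off from the Bloch--Wigner exact sequence for finite fields established in Proposition~\ref{mirzaii3}(ii): for every finite field $F$ with at least four elements one has
\[
0 \arr \tors(\mu(F),\mu(F))^\sim \arr K_3^\ind(F) \arr B(F) \arr 0.
\]
By Quillen's computation of the $K$-theory of finite fields, $K_3^\ind(\F_q)=K_3(\F_q)\simeq \z/(q^2-1)$ (as already recorded in the proof of Proposition~\ref{mirzaii3}). In particular $K_3^\ind(\F_q)$ is cyclic, so its quotient $B(\F_q)$ is cyclic and is therefore completely determined by its order. Thus it suffices to compute $|\tors(\mu(\F_q),\mu(\F_q))^\sim|$ and divide $q^2-1$ by it.

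The remaining work is that order computation, which I would carry out through Lemma~\ref{M-K} applied to $R=F=\F_q$, so that $T_F\simeq \tors(\mu(\F_q),\mu(\F_q))^\sim$. Here $\mu(\F_q)=\F_q^\times$ is cyclic of order $q-1$, and $\tors(\mu(\F_q),\mu(\F_q))=\tors_1^\z(\z/(q-1),\z/(q-1))\simeq \z/(q-1)$. From the short exact sequence
\[
0 \arr \tors(\mu(\F_q),\mu(\F_q))_\sigma \arr T_F \arr H_1(\Sigma_2, \mu_{2^\infty}(\F_q)\otimes \mu_{2^\infty}(\F_q)) \arr 0
\]
of Lemma~\ref{M-K}, I would first check that the coinvariants $\tors(\mu(\F_q),\mu(\F_q))_\sigma$ are again cyclic of order $q-1$ (the relevant transposition acting trivially), and then compute the right-hand $\Sigma_2$-homology term. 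If $q$ is even then $\char(\F_q)=2$ forces $\mu_{2^\infty}(\F_q)=1$, so the right-hand term vanishes and $T_F$ has order $q-1$. If $q$ is odd, writing the $2$-primary part of $\F_q^\times$ as $\mu_{2^\infty}(\F_q)\simeq \z/2^v$ with $v\geq 1$, one has $\mu_{2^\infty}(\F_q)\otimes\mu_{2^\infty}(\F_q)\simeq \z/2^v$ with $\Sigma_2$ acting trivially (under the standard identification $\z/2^v\otimes\z/2^v\simeq\z/2^v$ the transposition becomes the identity), whence $H_1(\Sigma_2,\z/2^v)\simeq \z/2$. Therefore $T_F$ has order $2(q-1)$ in the odd case.

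Putting the two computations together gives $|B(\F_q)|=(q^2-1)/(q-1)=q+1$ when $q$ is even and $|B(\F_q)|=(q^2-1)/(2(q-1))=(q+1)/2$ when $q$ is odd; since $B(\F_q)$ is a quotient of the cyclic group $K_3^\ind(\F_q)$, these orders force the stated isomorphisms. I expect the only genuinely delicate point to be the order computation of $T_F$, and specifically the parity-dependent factor of two: it is exactly the $\Sigma_2$-homology of $\mu_{2^\infty}(\F_q)\otimes\mu_{2^\infty}(\F_q)$ — equivalently, the difference between $\tors(\mu,\mu)$ and its nontrivial $\z/2$-extension $\tors(\mu,\mu)^\sim$ — that separates the even and odd cases, every other step being uniform in $q$.
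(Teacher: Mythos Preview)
Your argument is correct and follows the same route as the paper: both start from the Bloch--Wigner exact sequence of Proposition~\ref{mirzaii3}(ii), use Quillen's $K_3(\F_q)\simeq\z/(q^2-1)$, and read off $B(\F_q)$ as a cyclic quotient once the order of $\tors(\mu(\F_q),\mu(\F_q))^\sim$ is known. The only cosmetic difference is that the paper simply records the structure of $\tors^\sim$ directly (equal to $\z/(q-1)$ for $q$ even and $\z/2(q-1)$ for $q$ odd), while you recover its order from the short exact sequence of Lemma~\ref{M-K}; these are the same computation.
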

\begin{proof}
This follows from the Bloch-Wigner exact sequence for finite fields proved
in the previous proposition and the fact that 
\[
\tors(\mu(\F_q), \mu(\F_q))\simeq {\F_q}^\times\simeq \z/(q-1).
\]
Note that $K_3^\ind(\F_q)=K_3(\F_q)\simeq \z/(q^2-1)$. Moreover, if $q$ is even then 
$\tors(\mu(\F_q), \mu(\F_q))^\sim= \tors(\mu(\F_q), \mu(\F_q))\simeq \z/(q-1)$ and
if $q$ is odd then $\tors(\mu(\F_q), \mu(\F_q))^\sim\simeq \z/2(q-1)$.
\end{proof}

\begin{exa}\label{sln-finite-fields2}
Let $F$ be any finite field. We already have proved that 
\[
H_3(\SL_2(F))_{F^\times}\simeq H_3(\GL_2(F))/H_3(\GL_1(F)),
\]
(see the proof of Proposition \ref{mirzaii3}). In fact we proved this for
$|F|\geq 4$. But the same isomorphism can be proved easily for $|F|=2,3$.

If $\char(F)=p>0$, then by Example \ref{p-sylow}, we have
\[
(H_3(\SL_2(F))_{F^\times})_{(p)}\simeq 
\begin{cases}
\z/p & \text{if $|F|=2,3,4,8$}\\
0    & \text{otherwise.}
\end{cases}
\]
This implies that 
\[
H_3(\SL_2(F))_{F^\times} \simeq \begin{cases}
H_3(\SL_2(F), \z[1/p])_{F^\times}\oplus \z/p & \text{if $|F|=2,3,4,8$}\\
H_3(\SL_2(F), \z[1/p])_{F^\times}    & \text{otherwise.}
\end{cases}
\]
But Hutchinson has proved that for any finite field $F$, the action of $F^\times$
on $H_3(\SL_2(F), \z[1/p])$ is trivial and $H_3(\SL_2(F), \z[1/p])\simeq K_3^\ind(F)$
\cite[Lemma 3.8, Corollary~3.9]{hutchinson2013}. These imply that
\[
H_3(\SL_2(F))_{F^\times} \simeq \begin{cases}
K_3^\ind(F) \oplus \z/p & \text{if $|F|=2,3,4,8$}\\
K_3^\ind(F) & \text{otherwise.}
\end{cases}
\]
\end{exa}

%%%%%%%%%%%%%%%%%%%%%%%%%%%%%%%%%%%%%%%%%%%%%%%%%%%%%%%%%%%%%%%%%%%%%%%%%%%

\bigskip
\address{{\footnotesize
Behrooz Mirzaii,

Institute of Mathematics and Computer Sciences (ICMC),

University of Sao Paulo (USP), Sao Carlos, Brazil.

e-mail:\ bmirzaii@icmc.usp.br,
}}
\end{document}